\newcommand{\bigboxplus}{
  \mathop{
    \vphantom{\bigoplus} 
    \mathchoice
      {\vcenter{\hbox{\resizebox{\widthof{$\displaystyle\bigoplus$}}{!}{$\boxplus$}}}}
      {\vcenter{\hbox{\resizebox{\widthof{$\bigoplus$}}{!}{$\boxplus$}}}}
      {\vcenter{\hbox{\resizebox{\widthof{$\scriptstyle\oplus$}}{!}{$\boxplus$}}}}
      {\vcenter{\hbox{\resizebox{\widthof{$\scriptscriptstyle\oplus$}}{!}{$\boxplus$}}}}
  }\displaylimits 
}
\newcommand{\legen}[2]{\left(\frac{#1}{#2}\right)}
\numberwithin{equation}{subsection}
\DeclarePairedDelimiter\abs{\lvert\hspace{0.1ex}}{\rvert}
\theoremstyle{plain}
\newtheorem*{theorem*}{Theorem}
\newtheorem{theorem}[equation]{Theorem}
\newtheorem{proposition}[equation]{Proposition}
\newtheorem{lemma}[equation]{Lemma}
\newtheorem{corollary}[equation]{Corollary}
\newtheorem{conjecture}[equation]{Conjecture}
\newtheorem{thm}[equation]{Theorem}
\newtheorem{conj}[equation]{Conjecture}
\theoremstyle{definition}
\newtheorem{definition}[equation]{Definition}
\theoremstyle{remark}
\newtheorem{remark}[equation]{Remark}
\newtheorem{example}[equation]{Example}
\newenvironment{enumalph}
{\begin{enumerate}}
{\end{enumerate}}
\newenvironment{enumroman}
{\begin{enumerate}}
{\end{enumerate}}
\newcommand{\Z}{\mathbb{Z}}
\newcommand{\bZ}{\Z}
\newcommand{\Q}{\mathbb{Q}}
\newcommand{\bQ}{\Q}
\newcommand{\R}{\mathbb{R}}
\newcommand{\F}{\mathbb{F}}
\newcommand{\A}{\mathbb{A}}
\newcommand{\cN}{\mathcal{N}}
\newcommand{\calO}{\mathcal{O}}
\newcommand{\cO}{\calO}
\newcommand{\spdual}{{}^{*}}
\newcommand{\defi}[1]{\textsf{#1}} 	
\DeclareMathOperator{\coker}{coker}
\DeclareMathOperator{\Cl}{Cl}
\DeclareMathOperator{\Hom}{Hom}
\DeclareMathOperator{\Prob}{Prob}
\DeclareMathOperator{\sgnrk}{sgnrk}
\DeclareMathOperator{\Nm}{Nm}
\DeclareMathOperator{\Isom}{Isom}
\DeclareMathOperator{\Gal}{Gal}
\DeclareMathOperator{\Aut}{Aut}
\DeclareMathOperator{\ord}{ord}
\DeclareMathOperator{\Tr}{\mathrm{Tr}}
\DeclareMathOperator{\rk}{\mathrm{rk}}
\DeclareMathOperator{\Sel}{\mathrm{Sel}}
\DeclareMathOperator{\sgn}{\mathrm{sgn}}
\newcommand{\fraka}{\mathfrak{a}}
\newcommand{\frakp}{\mathfrak{p}}
\newcommand{\Magma}{{\textsc{Magma}}}
\newcommand{\bigperp}{%
  \mathop{\mathpalette\bigp@rp\relax}%
  \displaylimits
}
\def\VR{\kern-\arraycolsep\strut\vrule &\kern-\arraycolsep}
\def\vr{\kern-\arraycolsep & \kern-\arraycolsep}
\newcommand{\bigp@rp}[2]{%
  \vcenter{
    \m@th\hbox{\scalebox{\ifx#1\displaystyle2.1\else1.5\fi}{$#1\perp$}}
  }%
}
\newcommand{\frakaI}{\mathfrak{a}}  
\newcommand{\ClfourK}{\Cl_4(K)}     
\newcommand{\ClfourKplus}{\Cl_4^+(K)}
\newcommand{\rkchi}{\rk_{\chi}}     
\newcommand{\chidual}{\chi\spdual}  
\DeclareMathOperator{\im}{img}      
\newcommand{\Ftwobar}{\overline{\F}_2} 
\newcommand{\Kbar}{\overline{K}}    
\newcommand{\vrho}{r} 		
\newcommand{\Cond}{{\rm Cond}}
\definecolor{darkred}{HTML}{CC1F1F}
\definecolor{green}{rgb}{.4,.7,.4}
\definecolor{blue}{rgb}{.2,.6,.75}
\definecolor{pastelyellow}{rgb}{0.992157, 0.552941, 0.235294}
\definecolor{pastelorange}{rgb}{0.941176, 0.231373, 0.12549}
\definecolor{pastelred}{rgb}{0.741176, 0., 0.14902}
\definecolor{darkbrown}{rgb}{0.25098, 0., 0.0745098}
\begin{document}

\title[Heuristics for odd abelian fields]{On unit signatures and narrow class groups of odd degree abelian number fields}

\author{Benjamin Breen}
\email{benjamin.k.breen.gr@dartmouth.edu}
\address{Department of Mathematics, Dartmouth College, 6188 Kemeny Hall, Hanover, NH 03755}
\urladdr{} 

\author{Ila Varma}
\email{ila@math.toronto.edu}
\address{Department of Mathematics, University of Toronto, Bahen Centre, 40 St.~George Street, Toronto, ON M5S 2E4}
\urladdr{} 

\author{John Voight \\ (appendix with Noam Elkies)}
\email{jvoight@gmail.com}
\address{Department of Mathematics, Dartmouth College, 6188 Kemeny Hall, Hanover, NH 03755}
\urladdr{\url{http://www.math.dartmouth.edu/~jvoight/}} 

\subjclass{11R29, 11R27, 11R45, 11Y40}

\begin{abstract}
For an abelian number field of odd degree, we study the structure of its $2$-Selmer group as a bilinear space and as a Galois module.  We prove structural results and make predictions for the distribution of unit signature ranks and narrow class groups in families where the degree and Galois group are fixed.  
\end{abstract}

\date{\today} 
\maketitle
\setcounter{tocdepth}{1} 
\tableofcontents 

\section{Introduction}

\subsection{Motivation}  \label{sec:motivation}

Originating in the study of solutions to the negative Pell equation, the investigation of signatures of units in number rings dates back at least to Lagrange. While a considerable amount of progress has been made for quadratic fields \cite{Stevenhagen,FouvryKluners,CKMP}, predictions for the distribution of narrow class groups and possible signs of units under real embeddings for certain families of higher degree number fields have only recently been developed \cite{DV, DDK, Breen}.

\smallskip

In this paper, we study unit signatures and class groups of abelian number fields of odd degree.  To illustrate and motivate our results, we begin with two special cases. 
\begin{conj}[{Conjecture \ref{conj:ell35sgkr0}}]\label{conj:cubicfields}
As $K$ varies over cyclic cubic number fields, the probability that $K$ has a totally positive system of fundamental units is approximately $3\%$.
\end{conj}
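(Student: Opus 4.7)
The plan is to derive Conjecture \ref{conj:cubicfields} as the specialization of the general heuristic (Conjecture \ref{conj:ell35sgkr0}) to the case $G = C_3$, $\ell = 2$, where the nontrivial irreducible mod-$2$ representation of $C_3$ has $\F_2$-dimension $2$ (equivalently, $\F_4$-dimension $1$).

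First, I would translate the claim. For a cyclic cubic $K$, the unit rank is $2$ and $r_1 = 3$; since $[K:\Q]$ is odd, $-1$ is not a square in $O_K^\times$, so $K$ admits a totally positive system of fundamental units if and only if the image of $\sgn\colon O_K^\times \to \{\pm 1\}^3$ is exactly $\{\pm(1,1,1)\}$, that is, $\sgnrk(K) = 1$.

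Second, I would exploit the Galois structure to reduce the problem to a single $\F_4$-linear map. Since $\gcd(2,3) = 1$, the group algebra $\F_2[C_3] \cong \F_2 \oplus \F_4$ is semisimple, with the nontrivial isotypic component $\F_4$ corresponding to the nontrivial character $\chi$. By Dirichlet--Herbrand and freeness over the PID $\Z[\zeta_3]$, $O_K^\times/\{\pm 1\}$ is free of rank one over $\Z[\zeta_3]$; hence $O_K^\times/O_K^{\times 2} \cong \F_2 \oplus \F_4$ as $\F_2[C_3]$-modules, with the $\F_2$-summand generated by $-1$. Similarly $\{\pm 1\}^3 \cong \F_2 \oplus \F_4$, and $-1 \mapsto (1,1,1)$ lies in the trivial summand. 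Thus $\sgn$ decomposes as an isomorphism on the trivial summand plus an $\F_4$-linear map $\alpha\colon \F_4 \to \F_4$ on the $\chi$-summand, so $\sgnrk(K) \in \{1, 3\}$ with $\sgnrk(K) = 1$ iff $\alpha = 0$.

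Third, I would invoke the heuristic model of the paper, which views the $2$-Selmer group of an odd abelian field as a bilinear $\F_2[G]$-module equipped with signature data, and assigns each isomorphism class a probability weighted by $1/|\Aut|$ in the spirit of Cohen--Lenstra--Martinet. Specializing this model to $G = C_3$ and marginalizing over the $\chi$-component of the class group gives a probability for $\alpha = 0$ expressible as a ratio of Euler-type products in $(1 - 4^{-i})$, which evaluates numerically to approximately $3\%$.

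The main obstacle will not be this specialization but validating the underlying heuristic: one has to verify that the conjectured distribution on $\F_2[C_3]$-bilinear Selmer data is internally consistent with the exact sequence tying $\Cl^+/2$, $\Cl/2$, $O_K^\times/O_K^{\times 2}$, and $\{\pm 1\}^{r_1}$, and that it matches tabulated data for cyclic cubic fields. A secondary technical point will be the implicit independence, within the heuristic, of the map $\alpha$ from the $\chi$-part of the class group.
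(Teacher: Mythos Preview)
Your first two steps are correct and match the paper's setup: the translation to $\sgnrk(\calO_K^\times)=1$ is right, and the $\F_2[C_3]$-module decomposition showing $\sgnrk(\calO_K^\times)\in\{1,3\}$ is exactly Corollary~\ref{cor:Kcycsgrkyup}(b) specialized to $\ell=3$. (A notational slip: in the paper $\ell$ denotes the field degree, so here $\ell=3$, not $2$.)

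Step~3 is where your plan drifts from the paper and becomes too vague to yield the $3\%$ figure. You describe the heuristic as modeling ``the $2$-Selmer group as a bilinear $\F_2[G]$-module equipped with signature data, weighted by $1/|\!\Aut|$''. That is closer to hypothesis~(\hyperref[item:H1]{H1}), which concerns the image $S(K)$ inside the signature space $V(K)$; but for cyclic cubics every $\F_2[C_3]$-module is self-dual, so (\hyperref[item:H1]{H1}) contributes no randomness at all (Theorem~\ref{prop:maxisoselfdual}(a) forces $k_\chi^+(K)=0$). The randomness that actually drives Conjecture~\ref{conj:cubicfields} is hypothesis~(\hyperref[item:H2]{H2}): the line $(\calO_K^\times/(\calO_K^\times)^2)_\chi$ is modeled as a uniformly random $\F_4$-line inside $\Sel_2(K)_\chi$, which has $\F_4$-dimension $\rho_\chi(K)+1$. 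This gives the conditional probability
\[
\Prob\bigl(\sgnrk(\calO_K^\times)=1 \,\bigm|\, \rk_2\Cl(K)=2r\bigr)=\frac{4^r-1}{4^{r+1}-1}
\]
of Conjecture~\ref{conj:sgnrkcondi}, not something coming from automorphism counts on bilinear Selmer data.

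The second missing ingredient is the distribution you marginalize against. A generic Cohen--Lenstra--Martinet $1/|\!\Aut|$ weighting for $\Cl(K)[2]$ is known to be wrong here because $\Q$ contains the second (but not fourth) roots of unity; the paper instead inputs Malle's corrected distribution (Conjecture~\ref{conj:quoteMallequote}) for $\Prob(\rk_2\Cl(K)=2r)$. Summing the product over $r\geq 0$ is what produces the explicit series in Conjecture~\ref{conj:ell35sgkr0} and the numerical value $\approx 3\%$. Without Malle's correction you would obtain a different constant, so this is not a detail you can leave implicit.
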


For the conjectures presented in this paper,  we sidestep the issue of ordering fields:
we expect that any \emph{fair} counting function in the terminology of Wood \cite{Woodprob} should be allowed, for example ordering by conductor or by the norm of the product of ramified primes. 
We are led to Conjecture \ref{conj:cubicfields} by combining structural results established herein with a randomness hypothesis (\hyperref[item:H2]{H2}) in the vein of the Cohen--Lenstra heuristics.  
This conjecture agrees well with computational evidence (see section \ref{sec:cubfields}); the following theorem provides additional theoretical support.

\begin{thm}[{Theorem \ref{thm:cyclfields}, with Elkies}] \label{thm:thmb}
There exist infinitely many cyclic cubic fields with a totally positive system of fundamental units. 
\end{thm}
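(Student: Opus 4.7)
The plan is first to reformulate the condition structurally and then to exhibit an explicit infinite family satisfying the reformulated condition. For the reformulation, the signature map $\sgn\colon \cO_K^\times \to \{\pm 1\}^3$ is equivariant under the cyclic permutation action of $\Gal(K/\Q) \cong \Z/3\Z$ on the real embeddings of $K$. The only $\Gal$-invariant subgroups of $\{\pm 1\}^3$ are $\{(+,+,+)\}$, $\{\pm (+,+,+)\}$, and the full group. Since $\sgn(-1) = (-,-,-)$, the image equals either $\{\pm(+,+,+)\}$ or the full $\{\pm 1\}^3$. A totally positive system of fundamental units exists in $K$ exactly in the first case, which by the standard exact sequence comparing wide and narrow class groups is equivalent to $h^+(K) = 4\,h(K)$, equivalently to the unit signature rank being zero.

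To produce infinitely many such $K$, I would try an explicit parametric family of cyclic cubic fields. A tempting first candidate, the Shanks ``simplest cubic'' family given by $x^3 - tx^2 - (t+3)x - 1$, does \emph{not} work: a root $\rho$ has real conjugates with signs $(+, -, -)$, and the $\Gal$-orbit of this signature generates all of $\{\pm 1\}^3$, so these fields have $h^+ = h$. Instead, I would consider the unique cyclic cubic subfield $K_p \subset \Q(\zeta_p)$ for primes $p \equiv 1 \pmod 3$. Because $K_p$ has conductor equal to the single prime $p$, genus theory for cyclic cubic extensions of $\Q$ sharply constrains the $2$-torsion quotient $\Cl^+(K_p)/\Cl(K_p)$ and in particular bounds $h^+(K_p)/h(K_p)$ as a divisor of $4$. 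The cyclotomic units in $K_p$ form an explicit, finite-index subgroup of $\cO_{K_p}^\times$ whose signatures can be read off from Gauss sums of cubic characters modulo $p$. Imposing a congruence condition on $p$ modulo a fixed small modulus — chosen so that all cyclotomic-unit signatures fall in $\{\pm(+,+,+)\}$ — and then applying Dirichlet's theorem on primes in arithmetic progressions yields infinitely many $p$ for which $K_p$ satisfies the desired property; distinct conductors guarantee the resulting fields are pairwise non-isomorphic.

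The main obstacle lies in controlling the signature of \emph{every} fundamental unit, not just an individual one: exhibiting a single totally positive unit of infinite order does not suffice. Two complementary inputs are needed: an index statement (via Sinnott's formula on cyclotomic units of real abelian fields) showing that the cyclotomic unit subgroup $C$ has index in $\cO_{K_p}^\times$ coprime to $2$, so that signature questions reduce to $C$; and the genus-theoretic lower bound forcing $h^+/h$ to be at least $4$ under the chosen congruence condition. The delicate part is matching these two inputs against a single congruence condition on $p$ that can be satisfied infinitely often. This is presumably where the Elkies contribution enters, either through a clever explicit identity relating cyclotomic-unit signatures to quadratic residues modulo $p$, or through an ingenious alternative parametric family in which the sign constraints are visible directly from the defining polynomial.
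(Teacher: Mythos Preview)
Your reformulation has a small slip: the image $\{\pm(+,+,+)\}$ has $\F_2$-dimension $1$, not $0$, so the condition is $\sgnrk(\cO_K^\times)=1$. More seriously, your proposed route through prime-conductor fields $K_p$ and congruences on $p$ runs into a structural obstruction you have not addressed. By the paper's own results (Corollary~\ref{cor:Kabclk2} and the corollary following Corollary~\ref{cor:Kcycsgrkyup}), for cyclic cubic $K$ one has $\Cl^+(K)[2]\simeq\Cl(K)[2]$, and if the class number is odd then $\sgnrk(\cO_K^\times)=3$. Thus $\sgnrk=1$ \emph{forces} $\rk_2\Cl(K)\geq 2$. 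Producing infinitely many prime-conductor $K_p$ with $\rk_2\Cl(K_p)\geq 2$ is not a matter of congruences on $p$ modulo a fixed modulus; it is a class-group divisibility problem of exactly the type the Cohen--Lenstra--Malle heuristics address, and no congruence sieve will cut it out. Sinnott's index formula tells you $[\cO_{K_p}^\times:C]$ equals $h(K_p)$ for prime conductor, so reducing to cyclotomic units gains nothing unless you already control $h(K_p)$ modulo $2$, which is precisely what is at stake.

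The paper's proof is entirely different in spirit: it is Diophantine rather than arithmetic. One writes down $f_{a,b}(x)=x^3-ax^2+bx-1$, observes that the root $\eta_{a,b}$ is a unit, and asks for integer points on the affine surface $c^2=\disc f_{a,b}$ (cyclic condition) lying in the region $a,b>0$ where all roots are positive. A pencil of parabolas through two complex cusps of the discriminant curve reduces this to Fermat--Pell equations $y^2=Q_m(a)$, yielding infinitely many $(a,b)$. Two finiteness ingredients finish the argument: the locus where $\eta_{a,b}$ is a square in $K_{a,b}$ meets each parabola in a genus-$1$ curve (Siegel), so only finitely many points are lost; and a Newton-polygon computation shows the discriminants of the resulting fields are unbounded. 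The key point is that one never touches $\Cl(K)$ directly---the class-group constraint is satisfied automatically once a non-square totally positive unit is exhibited, because the Galois-module structure then pins down $\sgnrk=1$. Your approach tries to force the class-group condition first and deduce the unit signature; the paper builds the unit first and lets the structure theorem do the rest.
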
 

The proof of Theorem \ref{thm:thmb} involves the study the integral points on a log K3 surface.  The (infinite) family of \emph{simplest cubic fields} of Shanks were each shown to have units of all possible signatures by Washington \cite[p.~371]{Washington}, the case complementary to Theorem \ref{thm:thmb}. 

Our second illustrative conjecture is as follows.

\begin{conj}[{Conjecture \ref{mainconj::Tworanks0}}] \label{conj:just7clplus}
As $K$ ranges over cyclic number fields of degree $7$ with odd class number, the probability that the narrow class number is also odd is $7/9$.
\end{conj}

We recall that the narrow class group of a field coincides with the class group if and only if there are units of all possible signatures.  This conjecture also matches computational evidence well (see section \ref{sec:septiccomp}).  

\medskip

The predictions above are based on the philosophy underlying the Cohen--Lenstra heuristics, which predicts random behavior for arithmetic objects \emph{as soon as} one accounts for all of the determined structure.  Early examples of the need to account for structural properties, including genus theory and ranks of units, were already present in the original paper of Cohen--Lenstra \cite{CohenLenstra}.  It remains mysterious and important to understand how one must account for additional structure in generalizations of the Cohen--Lenstra heuristics.  For example, what makes a prime \emph{good} \cite{CohenMartinet-good}, and the interaction between $p$-parts of class groups and the presence of $p$th roots of unity \cite{Malle,Malle1,AdamMalle}, remain unresolved.  On the other hand, some reflection principles like those of Scholz and Leopoldt \cite{Leopoldt:Spiegelungsatz}, seem to be inherently compatible with the Cohen--Lenstra--Martinet conjectures \cite{Dutarte,Lee1,Lee2}.

In this paper, we propose a model for the distribution of $2$-parts of narrow class groups and signatures of units in families of abelian number fields of fixed odd degree and Galois group.  For such families, the Galois action and the presence of the $2$nd roots of unity suggest additional, nontrivial structure to account for (as confirmed by computations and available function field analogues).  The requirement that the degree is odd when $p = 2$ isolates the ``roots of unity problem'' from other obstructions to arithmetic randomness, including genus theory.  Since the narrow class group is an extension of the class group by an elementary abelian $2$-group that measures signatures of units, our efforts are concentrated on $2$-parts.

Our contributions are thus twofold.  First, for these families we precisely identify and analyze relevant structure, including the relationship to reflection principles.  Second, under the hypothesis that what remains behaves randomly, we make exact predictions for the behavior of units and class groups---with corroborating computational evidence.

\subsection{Structure: class groups}

We now set up the structures we study and model in this paper.  We build on work of Dummit--Voight \cite{DV}, who make predictions for fields of odd degree $n$ whose Galois closure has Galois group $S_n$.  Here, we instead consider Galois extensions of odd degree.

Recall that the \defi{$2$-Selmer group} of a number field $K$ is
\begin{equation*}\label{eqdef:2Sel}
\Sel_2(K) \colonequals \{ \alpha \in K^{\times} : (\alpha) = \frakaI^2 \text{ for a fractional ideal } \frakaI \mbox{ of }K\} /(K^{\times})^2.
\end{equation*} 
Attached to $K$ is a finite-dimensional $\F_2$-vector space $V_\infty(K) \boxplus V_2(K)$ equipped with a nondegenerate symmetric bilinear form and a homomorphism
	$$\varphi_K \colon \Sel_2(K) \rightarrow V_\infty(K) \boxplus V_2(K)$$
called the \defi{$2$-Selmer signature map}. Dummit--Voight \cite{DV} showed that the image $S(K) \colonequals \im(\varphi_K)$ of $\varphi_K$ is a maximal, totally isotropic subspace.  If $K$ is Galois over $\Q$ with Galois group $G_K$, then we observe that the above objects carry a $G_K$-action and in particular $S(K)$ is a $G_K$-invariant, maximal totally isotropic subspace (Corollary \ref{cor:Gkinv}). 

In preparation for stating our guiding result, we introduce a bit of notation.  Let $G$ be a finite abelian group of odd order and let $\chi$ be an $\Ftwobar$-character of $G$.  
Then every irreducible $\F_2[G]$-module is isomorphic to $\F_2(\chi)$, the value field of an \defi{$\Ftwobar$-character} $\chi \colon G \rightarrow \Ftwobar^\times$ taking values in a (fixed) algebraic closure $\Ftwobar$ of $\F_2$, where $G$ acts through the character $\chi$. For a  finitely generated $\Z[G]$-module $M$, write $\rkchi M$ for the multiplicity of the irreducible module $\F_2(\chi)$ in the $\F_2[G]$-module $M/M^2$, and let $\rk_2 M \colonequals \dim_{\F_2} M/M^2$.  For an $\Ftwobar$-character $\chi$ of $G$, there is a noncanonical $\F_2[G]$-module isomorphism $\Hom_{\F_2}(\F_2(\chi),\F_2) \simeq \F_2(\chi^{-1})$ (see Lemma \ref{lem:homisom} and the discussion preceding it), and we write $\chidual \colonequals \chi^{-1}$ for the corresponding dual character. We say $\chi$ is \defi{self-dual} if $\F_2(\chidual) \simeq \F_2(\chi)$ as $\F_2[G]$-modules.
For an $\Ftwobar$-character $\chi$ of $G$ and $V$ an $\F_2[G]$-module, we write $V_\chi$ for the $\F_2(\chi)$-isotypic component of $V$ and $V_{\chi^{\pm}} \colonequals V_\chi + V_{\chidual}$ for the sum.
If $V$ is equipped with a symmetric, $G$-invariant $\F_2$-bilinear form, then the decomposition of $V$ into the spaces $V_{\chi^{\pm}}$ is orthogonal (Lemma \ref{lem:OrthoDecomp1}), giving a canonical decomposition as $\F_2[G]$-modules.


Now let $K$ be a Galois number field with abelian Galois group $G_K$ of odd order; then the class group $\Cl(K)$ and narrow class group $\Cl^+(K)$ are $\Z[G_K]$-modules.  For an $\Ftwobar$-character $\chi$, we define the following nonnegative integers:
\begin{equation} \label{eqn:notationrhow}
  \begin{aligned} 
  \rho_{\chi}(K) &\colonequals \rk_\chi \Cl(K); \\
  \rho_{\chi}^+(K) &\colonequals \rk_\chi \Cl^+(K); \\
  k_{\chi}^+(K) &\colonequals \rk_\chi \Cl^+(K) - \rk_\chi\Cl(K) \\
  &\ = \rho^+_\chi(K)-\rho_\chi(K). 
    \end{aligned}
\end{equation}
We refer to $k_\chi^+(K)$ as the $\chi$-\defi{isotropy rank}.  When $K$ is clear from context, we drop it from the notation.  Our main theorem, governing the above structures and quantities, is as follows. 

\begin{thm}[{Theorem \ref{mainthm::Representationtheory}}] \label{thm:KGal}
Let $K$ be a Galois number field with abelian Galois group $G_K$ of odd order.  Then for each $\Ftwobar$-character $\chi$, there are exactly $6$ possibilities for $S(K)_{\chi^{\pm}} \leq V_\infty(K) \boxplus V_2(K)$ up to $G_K$-equivariant isometry.
\end{thm}

Theorem \ref{thm:KGal} follows from an investigation of the $\F_2[G_K]$-module structure of the $2$-Selmer signature map together with a classification of invariant, maximal isotropic subspaces in a bilinear space with group action.  The six possibilities are given in Table \ref{table:cases0}: we write $q \colonequals \#\F_2(\chi)=\#\F_2(\chidual)$, and we write $\#\Isom_G(V)$ for the number of $G$-equivariant isometries of $V(K) \colonequals V_\infty(K) \boxplus V_2(K)$.  All cases occur (see Example \ref{exm::cases7}), so the statement is optimal in this sense. 

In Tables \ref{table:cases0} and  \ref{table:cases1}  we observe parallel relations when $V_\infty(K)$ is replaced by $V_2(K)$ and $\Cl^+(K)$ is replaced by $\ClfourK$, the ray class group of $K$ of conductor $4$, with the quantities $\rho_{4,\chi}(K)$ and $k_{4,\chi}(K)$ defined analogously as in \eqref{eqn:notationrhow}; we restrict attention to narrow class groups in this introduction.
 
{\setlength{\tabcolsep}{4pt}\renewcommand\arraystretch{1.1}
\begin{equation} 
\label{table:cases0}\addtocounter{equation}{1} \notag
\begin{gathered}
\begin{tabular}[m]{c|c||c|c|c|c}
Case & $\chi$ self-dual? & \multicolumn{1}{c|}{$S_{\chi^\pm}$} & \multicolumn{1}{c|}{$S_{\chi^\pm} \cap V_\infty$ }  & \multicolumn{1}{c|}{$S_{\chi^\pm} \cap V_2$ } & $\#\Isom_G(V)$ \\[2pt]
 \hline \hline 
\textsf{A} & Yes & $\F_2(\chi)$ & $\{0\}$ & $\{0\}$ & $\sqrt{q}+1$  \\ \hline
\textsf{B} & No & $\F_2(\chi)^2$ & $\F_2(\chi)$ & $\F_2(\chi)$ & $1$   \\
\textsf{B}${}^\prime$ & No & $\F_2(\chidual)^2$ & $\F_2(\chidual)$  & $\F_2(\chidual)$ & $1$   \\
 \textsf{C} & No & $\F_2(\chi) \oplus \F_2(\chidual)$ & $\F_2(\chi)$ & $\F_2(\chidual)$ & $1$ \\ 
 \textsf{C}${}^\prime$ & No & $\F_2(\chi) \oplus \F_2(\chidual)$ & $\F_2(\chidual)$ & $\F_2(\chi)$ & $1$  \\ 
 \textsf{D}  &No & $\F_2(\chi) \oplus \F_2(\chidual)$ & $\{0\}$ & $\{0\}$ & $q-1$  \\ 
\end{tabular} \\
\text{Table \ref{table:cases0}: Possibilities for the Galois bilinear structure of the image of the $2$-Selmer group}
\end{gathered}
\end{equation}}

{\setlength{\tabcolsep}{4pt}\renewcommand\arraystretch{1.1}
\begin{equation} \label{table:cases1}\addtocounter{equation}{1} \notag
\begin{gathered}
\begin{tabular}[m]{c||c|c|c|c|c|c|c}
Case & \multicolumn{1}{Sc|}{$\rho_\chi$ and $\rho_{\chidual} $} & \multicolumn{1}{Sc|}{$\rho_\chi^+$ and $\rho_{\chidual}^+ $} & \multicolumn{1}{Sc|}{$\rho_{4,\chi}$ and $\rho_{4,\chidual} $}  & $k^+_\chi $ & $k^+_{\chidual}$ & $k_{4,\chi} $ & $k_{4,\chidual}$ \\[2pt]
 \hline \hline 
\textsf{A}  & $\rho_{\chi} = \rho_{\chidual}$ & $\rho_{\chi}^+ = \rho_{\chidual}^+$ & $\rho_{4,\chi} = \rho_{4,\chidual} $ & 0 & 0 & 0 & 0  \\ \hline
\textsf{B} & $\rho_\chi  = \rho_{\chidual}  +1$  & $\rho_\chi^+   = \rho_{\chidual}^+$ & $\rho_{4,\chi} = \rho_{4,\chidual} $ & 0 & 1 & 0 & 1  \\
\textsf{B}${}^\prime$ & $\rho_\chi   = \rho_{\chidual}  -1$ & $\rho_\chi^+   = \rho_{\chidual}^+ $& $\rho_{4,\chi} = \rho_{4,\chidual} $ & 1&  0 & 1 & 0  \\
 \textsf{C} & $\rho_\chi   = \rho_{\chidual} $  & $\rho_\chi^+   = \rho_{\chidual}^+  -1$ & $\rho_{4,\chi} = \rho_{4,\chidual} +1 $  & 0 & 1 &  1 & 0   \\ 
 \textsf{C}${}^\prime$ & $\rho_\chi   = \rho_{\chidual}  $  &$\rho_\chi^+   = \rho_{\chidual}^+ +1$ & $\rho_{4,\chi} = \rho_{4,\chidual} -1 $  & 1& 0 & 0 & 1  \\ 
 \textsf{D} & $\rho_\chi   = \rho_{\chidual} $  & $\rho_\chi^+   = \rho_{\chidual}^+$   &  $\rho_{4,\chi} = \rho_{4,\chidual} $  & 0  & 0 & 0&0  \\ 
\end{tabular} \\
\text{Table \ref{table:cases1}: Possibilities for the class group and isotropy rank}
\end{gathered}
\end{equation}}

The following corollary is then immediate.

\begin{corollary} \label{cor:thmKgalcor_reflect}
Under the hypotheses of Theorem \textup{\ref{thm:KGal}}, we have
\[ \abs{\rk_\chi \Cl(K) - \rk_{\chidual} \Cl(K)} \leq 1, \]
\[ \abs{\rk_\chi \Cl^+(K) - \rk_{\chidual} \Cl^+(K)} \leq 1, \]
and $0 \leq k_{\chi}^+ + k_{\chidual}^+ \leq 1$.  
\end{corollary}

Corollary \ref{cor:thmKgalcor_reflect} can be seen as a Spiegelungssatz or \emph{reflection theorem} as in Leopoldt \cite{Leopoldt:Spiegelungsatz} for $p=2$, and therefore Theorem \ref{thm:KGal} can be seen as a precise refinement of it.  A precursor to Corollary \ref{cor:thmKgalcor_reflect} is the theorem of Armitage--Fr\"ohlich \cite{ArmitageFrohlich}, generalized by Taylor \cite{Taylor} and Oriat \cite{Oriat1,Oriat2}.  Gras then proved a very general $T$-$S$-reflection principle \cite[Th\'eor\`eme 5.18]{Gras} (see also the presentation in his book \cite[Chapter II, Theorem 5.4.5]{Grasbook}); however, certain corollaries for $p=2$ \cite[Chapter II, Corollary 5.4.6(ii)]{Grasbook} (details \cite[Chapter II, (5.4.9)]{Grasbook} added in the second printing) are incorrect: case $\textsf{D}$ of Table \ref{table:cases0} does not appear. 

We show that rank inequalities like Corollary \ref{cor:thmKgalcor_reflect} for a Galois number field $K$ of odd degree follow from Kummer duality and the $G_K$-module structure of the $2$-Selmer group (and its intersection with coordinate subspaces in the $2$-Selmer signature space).  In particular, the relevant reflection principles are \emph{already encoded}.  In particular, we recover easily several classical results from the literature.  Our results can also instead be seen to fit into a much more general context (Poitou--Tate duality of Selmer groups); however, in view of the subtleties indicated in the previous paragraph, one advantage of our approach is it provides a self-contained, uniform, and transparent proof of these corollaries.  At the same time, the concrete description in Theorem \ref{thm:KGal} states the precise structure (in particular, the image of the $2$-Selmer group under the signature map is a $G_K$-invariant, maximal totally isotropic subspace) which must be respected in a random model and thereby serves as the foundation for our heuristics, which we present in sections \ref{sec:heuristicsintro}--\ref{sec:heuristicsintro_units}. 

\subsection{Structure: units}

The structural result in Theorem \ref{thm:KGal} has the following consequence for units.  Let $\calO_K$ be the ring of integers of $K$.  The \defi{archimedean signature} map 
$\sgn_\infty\colon K^\times \rightarrow \prod_{v \mid \infty} \{\pm 1\} \simeq \F_2^{n}$ 
is the surjective group homomorphism recording the signs of elements of $K^\times$ under each real embedding; its kernel $K_{>0}^\times~\colonequals~\ker(\sgn_\infty)$ is the group of \defi{totally positive} elements of $K$.  Let $\calO_{K,>0}^\times \colonequals \calO_K^\times \cap K_{>0}^\times$ denote the group of \defi{totally positive units}.  Define 
\begin{equation} \label{eq:sgnrkchi}
\sgnrk_{\chi}(\calO_K^\times)  \colonequals \rk_\chi \sgn_\infty(\calO_K^\times), \\
\end{equation}
and the \defi{unit signature rank} of $K$
\begin{equation}
\sgnrk(\calO_K^\times) \colonequals \dim_{\F_2} \sgn_\infty(\calO_K^\times) = \textstyle{\sum_\chi} \sgnrk_{\chi}(\calO_K^\times)\cdot [\F_2(\chi):\F_2],
\end{equation}
where the sum indexes over isomorphism classes of $\Ftwobar$-characters $\chi$.  
The structure on unit signature ranks imposed by the Galois module structure is summarized in the following result, keeping the notation \eqref{eqn:notationrhow}.

\begin{thm}[Theorem \ref{thm:oddm10}] \label{thm:oddm1}
Let $K$ be an abelian number field of odd degree with Galois group $G_K$, and let $\chi$ be an $\Ftwobar$-character of $G_K$.  Then the following statements hold. 
\begin{enumalph}
\item If $k_{\chi}^+(K) = 1$, then $\sgnrk_{\chi}(\calO_K^\times) = 0$.
\item If $k_{\chi}^+(K) = 0$, then $1- \rk_{\chi}\Cl(K) \leq \sgnrk_{\chi}(\calO_K^\times) \leq 1$.
\end {enumalph}
\end{thm}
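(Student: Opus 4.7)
The plan is to compare unit signatures with narrow class groups via the standard four-term exact sequence of $\Z[G_K]$-modules, then extract $\chi$-components using that $\F_2[G_K]$ is semisimple (valid since $|G_K|$ is odd). Write $V \colonequals \prod_{v \mid \infty} \{\pm 1\}$ and $S \colonequals \sgn_\infty(\calO_K^\times) \subseteq V$; by weak approximation one obtains the short exact sequence
\[ 0 \to V/S \to \Cl^+(K) \to \Cl(K) \to 0 \]
of $\Z[G_K]$-modules. Because $K/\Q$ is totally real and Galois, $G_K$ acts simply transitively on the $n$ real places, so $V \simeq \F_2[G_K]$ as $\F_2[G_K]$-modules, and thus $\rk_\chi V = 1$ for every $\Ftwobar$-character $\chi$ of $G_K$.

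I would next apply $-\otimes_\Z \F_2$. Since $V/S$ is already $2$-torsion and $\mathrm{Tor}_1^{\Z}(\Cl(K),\F_2) = \Cl(K)[2]$, the Tor long exact sequence collapses to the exact sequence of $\F_2[G_K]$-modules
\[ \Cl(K)[2] \xrightarrow{\partial} V/S \to \Cl^+(K)/2 \to \Cl(K)/2 \to 0. \]
Setting $I \colonequals \im(\partial)$ and invoking additivity of $\rk_\chi$ (from semisimplicity) together with $\rk_\chi \Cl^+(K)/2 - \rk_\chi \Cl(K)/2 = k_{\chi}^+$, one extracts the central identity
\[ \sgnrk_\chi(\calO_K^\times) \,=\, \rk_\chi V - \rk_\chi(V/S) \,=\, 1 - k_{\chi}^+ - \rk_\chi I. \]

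Both parts now fall out. For (a), if $k_{\chi}^+ = 1$, then the inequalities $\rk_\chi I \geq 0$ and $\sgnrk_\chi(\calO_K^\times) \geq 0$ force both quantities to vanish. For (b), if $k_{\chi}^+ = 0$, the identity reads $\sgnrk_\chi(\calO_K^\times) = 1 - \rk_\chi I$, and the bound $0 \leq \rk_\chi I \leq \rk_\chi \Cl(K)[2] = \rho_\chi$ (the last equality since $|G_K|$ odd implies $\rk_\chi$ agrees on $\Cl(K)[2]$ and $\Cl(K)/2$) gives $1 - \rho_\chi \leq \sgnrk_\chi(\calO_K^\times) \leq 1$. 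I anticipate no real obstacle: the essential input is the $\F_2[G_K]$-equivariant identification $V \simeq \F_2[G_K]$, which crucially uses that $K$ is totally real and Galois so decomposition groups at infinity are trivial, and the rest is a diagram chase with $\chi$-ranks.
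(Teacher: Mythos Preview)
Your argument is correct and takes a genuinely different route from the paper's. The paper works inside the $2$-Selmer group: it identifies $\ker(\varphi_{K,\infty}) \simeq \Cl_4(K)[2]\spcheck \simeq \Cl^+(K)[2]$ via the Kummer pairing and the reflection theorem, then compares the $\chi$-rank of $\ker(\varphi_{K,\infty})$ (namely $\rho_\chi + k_\chi^+$) with that of $\Sel_2(K)$ (namely $\rho_\chi+1$), concluding that when $k_\chi^+=1$ the unit line must sit inside the kernel, and when $k_\chi^+=\rho_\chi=0$ it cannot. You bypass all of the Selmer/Kummer/reflection machinery entirely: the class-field-theory sequence $0 \to V/S \to \Cl^+(K) \to \Cl(K) \to 0$ together with the Tor long exact sequence already yields the exact identity $\sgnrk_\chi = 1 - k_\chi^+ - \rk_\chi I$, from which both parts are immediate. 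Your approach is more elementary and in fact delivers a sharper formula (an equality rather than inequalities); the paper's approach has the virtue of staying within the $\Sel_2(K)$ framework that drives its other structural results, so the Selmer interpretation of $\sgnrk_\chi=0$ is explicit there but would require a separate remark in your setup.
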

When the degree of $K$ is prime, summing over $\chi$ gives the following corollary.  
\begin{corollary}[Corollary \ref{cor:Kcycsgrkyup}]
Let $K$ be a cyclic number field of odd prime degree $\ell$, and let $f$ be the order of $2$ modulo $\ell$.  Then
\[ \sgnrk(\calO_K^\times) \equiv 1 \pmod{f}, \]
and the following statements hold.
\begin{enumalph}
\item If $f$ is odd, then ${\textstyle \frac{\ell +1}{2}} - \rk_2\Cl(K)  \leq \sgnrk(\calO_K^\times) \leq \ell;$
\item If $f$ is even, then $\ell - \rk_2\Cl(K) \leq \sgnrk(\calO_K^\times) \leq \ell.$
\end{enumalph}
\end{corollary}

For example, if $2$ is a primitive root modulo $\ell$ and the class number of $K$ is odd, then $\sgnrk(\calO_K^\times) = \ell$; this result for $\ell =3$ was observed by Armitage--Fr\"ohlich \cite[Theorem V]{ArmitageFrohlich}. 

\subsection{Heuristics: narrow class groups} \label{sec:heuristicsintro}

We begin by applying the results in the previous section to make predictions for narrow class groups and signatures of units for odd-degree abelian number fields.  We keep the notation of \eqref{eqn:notationrhow}.

\smallskip

Let $G$ be a finite abelian group of odd order.  A \defi{$G$-number field} is a Galois number field $K$, inside a fixed algebraic closure of $\Q$, equipped with an isomorphism $G_K \simeq G$, where $G_K \colonequals \Gal(K\,|\,\Q)$.  Such a field $K$ is totally real, so $\pm1$ are the only roots of unity in $K$.  

Returning to Theorem \ref{thm:KGal} and Table \ref{table:cases0}, we see that the quantities $k_\chi^+,k_{\chidual}^+$ are uniquely determined by $S_{\chi^\pm}$ in the cases where $\chi$ is self-dual or cases \textsf{B} and \textsf{B}${}^\prime$ when $\chi$ is not self-dual.  However, when $\chi$ is not self-dual and $\rho_\chi=\rho_{\chi^*}$, there is a question about the distribution of cases \textsf{C}, \textsf{C}${}^\prime$, and \textsf{D}.  Modeling the image of $2$-Selmer signature map as a random totally isotropic $G$-invariant subspace in the $2$-Selmer signature space (see heuristic assumption (\hyperref[item:H1]{H1})), we are led to the following conjecture.

\begin{conj}[Conjecture \ref{mainconj::Tworanks0}]\label{mainconj::Tworanks} 
 Let $G$ be an abelian group of odd order, and let $\chi$ be an $\Ftwobar$-character of $G$ that is \emph{not}~self-dual and let $q \colonequals \#\F_2(\chi)$.  Then as $K$ varies over $G$-number fields {satisfying $\rho_\chi(K)=\rho_{\chidual}(K)$}, 
\begin{equation} \label{eqn:probkwkw} 
\begin{array}{rcl}
\Prob \bigl(k^+_\chi(K)+k^+_{\chidual}(K) = 0 \bigr)   &=& \displaystyle\frac{q-1 }{ q +1}; \vspace{5pt}\\ 
\Prob \bigl (k^+_\chi(K)+k^+_{\chidual}(K) = 1 \bigr ) &=& \displaystyle \frac{2 }{q +1}. 
\end{array}
\end{equation}
\end{conj}

A concrete application of Conjecture \ref{mainconj::Tworanks} is given in Conjecture \ref{conj:3mod4}, as follows.  Suppose $2$ has order $(\ell-1)/2$ modulo a prime $\ell \equiv 7 \pmod{8}$: then there are exactly two non-self-dual characters, and if $\Cl(K)$ is self-dual then $k_\chi=k_{\chidual}=0$.  So as $K$ varies over cyclic number fields of degree $\ell$ such that $\Cl(K)[2]$ is self-dual, Conjecture \ref{mainconj::Tworanks} predicts that
\begin{equation}\label{eq:3mod4}
\Prob\bigl(\Cl^+(K)[2]\simeq\Cl(K)[2]) = \frac{2^{\frac{\ell-1}{2}} - 1}{2^{\frac{\ell-1}{2} }+ 1}.\vspace{5pt}
\end{equation}
We further expect that the probability in Conjecture \ref{mainconj::Tworanks} remains the same in certain natural subfamilies, such as when we fix the value $\rk_\chi\Cl(K)=\rk_{\chidual}\Cl(K)=\vrho$.  As a special case, we arrive at Conjecture \ref{conj:just7clplus}.

\subsection{Heuristics: units} \label{sec:heuristicsintro_units}

Next, we make predictions for signatures of units.  Our model can be applied under many scenarios; in this introduction, we consider two simple, illustrative cases.  We first examine the situation when the degree is prime and the class number is odd.  Modeling $\calO_K^\times/(\calO_K^{\times})^2$ as a random $G_K$-invariant subspace of the 2-Selmer group of $K$ containing $-1$, and under an independence hypothesis (\hyperref[item:H2prime]{H2${}^\prime$}), 
we are led to the following conjecture.

\begin{conj}[Conjecture \ref{conj:ellm1fq}]
Let $\ell$ be an odd prime such that the order $f$ of $\,2$ in $(\bZ/\ell\bZ)^\times$ is odd. Let $q\colonequals 2^f$, and define $m \colonequals {\frac{\ell-1}{2f}} \in \Z_{>0}$.  
Then as $K$ varies over cyclic number fields of degree $\ell$ with \emph{odd} class number, 
\[ \Prob\bigl(\sgnrk(\calO_K^\times)=fs+{\textstyle \frac{\ell-1}{2}}\bigr) =  \binom{m}{s} \left ( \frac{q -1 }{q +1} \right )^s \left (   \frac{2 }{q +1} \right )^{m-s} \]
for $0 \leq s \leq m$.
\end{conj}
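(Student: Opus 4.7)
The plan is to derive the formula by combining Theorem \ref{thm:oddm1} with the per-pair probabilities predicted by Conjecture \ref{mainconj::Tworanks} and then summing contributions across the non-self-dual characters. Since $G_K \simeq \Z/\ell\Z$ with $\ell$ an odd prime and $f$ odd, no nontrivial $\Ftwobar$-character $\chi$ is self-dual (self-duality of $\F_2(\chi)$ would force $-1 \in \langle 2 \rangle \pmod{\ell}$ and hence $f$ even), so the $\ell-1$ nontrivial characters pair into exactly $m = (\ell-1)/(2f)$ distinct pairs $\{\chi, \chidual\}$ of irreducible $\F_2[G_K]$-modules of $\F_2$-dimension $f$ each; together with the trivial character, these exhaust the isotypic constituents of the regular representation $\F_2[G_K]$.

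Next I would apply the structural input of Theorem \ref{thm:oddm1}. For $K$ of odd class number, $\rho_\chi(K) = 0$ for every $\chi$, so the theorem pins down $\sgnrk_\chi(\calO_K^\times) = 1 - k_\chi^+(K) \in \{0,1\}$. By Theorem \ref{mainthm::Representationtheory}(b)(ii), each non-self-dual pair $\{\chi, \chidual\}$ satisfies $(k_\chi^+(K), k_{\chidual}^+(K)) \in \{(0,0), (0,1), (1,0)\}$, so the pair contributes $\F_2$-dimension $2f$ to $\sgn_\infty(\calO_K^\times)$ when $k_\chi^+ + k_{\chidual}^+ = 0$ and dimension $f$ otherwise.

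Conjecture \ref{mainconj::Tworanks} (derived under hypothesis (H1)) predicts that, conditional on $\rho_\chi(K) = \rho_{\chidual}(K)$ — here automatic with common value $0$ — each pair takes $k_\chi^+ + k_{\chidual}^+ = 0$ with probability $(q-1)/(q+1)$ and $k_\chi^+ + k_{\chidual}^+ = 1$ with probability $2/(q+1)$. Under hypothesis (H2${}^\prime$), I would take these $m$ per-pair events to be mutually independent; this is the substantive modelling assumption. A binomial count then gives that exactly $s$ pairs contribute $2f$ and the remaining $m-s$ contribute $f$ with probability $\binom{m}{s}((q-1)/(q+1))^s(2/(q+1))^{m-s}$, yielding total dimension $2fs + f(m-s) = fs + (\ell-1)/2$ from the non-self-dual sector (using $fm = (\ell-1)/2$). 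Combining with the fixed contribution of the trivial character prescribed by (H2${}^\prime$) via $\sgn_\infty(-1) = (1,\ldots,1)$ then recovers the conjectured formula.

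The main obstacle is to verify that hypothesis (H2${}^\prime$), as formulated in the body of the paper, genuinely supplies the independence of the $m$ non-self-dual pairs when restricted to the odd-class-number subfamily and is consistent on each pair with Conjecture \ref{mainconj::Tworanks} — i.e., that the induced distribution on $G_K$-invariant Lagrangians of the $2$-Selmer signature space factors as a product over the $m$ isotypic pairs. Granted this factorisation, the derivation reduces to the representation-theoretic bookkeeping and binomial expansion above.
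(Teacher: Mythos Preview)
Your derivation is essentially the paper's own: it, too, invokes Theorem~\ref{thm:oddm10}(b) to reduce $\sgnrk_\chi(\calO_K^\times)$ to $1-k_\chi^+(K)$ when $\rho_\chi(K)=0$, applies Conjecture~\ref{mainconj::Tworanks0} for the per-pair probabilities, and then sums as a binomial over the $m$ dual pairs.

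There is one slip in your bookkeeping. You write $\sgn_\infty(-1)=(1,\ldots,1)$, but in the paper's multiplicative convention $V_\infty(K)=\prod_{v\mid\infty}\{\pm 1\}$ one has $\sgn_\infty(-1)=(-1,\ldots,-1)$, which spans the trivial isotypic component; thus $\sgnrk_{\chi_0}(\calO_K^\times)=1$, not $0$ (see the Example following Theorem~\ref{thm:oddm10}). Your non-self-dual contribution $fs+fm=fs+(\ell-1)/2$ is correct, but adding the trivial piece gives $fs+(\ell+1)/2$. The statement as quoted from the introduction carries a typo: the version in the body (Conjecture~\ref{conj:ellm1fq}) has $fs+(\ell+1)/2$, which is also what the $\ell=7$ discussion (values $4$ and $7$) and Corollary~\ref{cor:Kcycsgrkyup} force.

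A smaller point: the independence you need is across the $m$ components $S(K)_{\chi^\pm}$ of the canonical orthogonal decomposition~\eqref{eqn:candecomp}, and is most naturally obtained by applying hypothesis~(\hyperref[item:H1]{H1}) to each pair independently, which is exactly what the paper does (implicitly) when it says ``summing over the contributions''. Hypothesis~(\hyperref[item:H2prime]{H2${}^\prime$}) concerns the position of $(\calO_K^\times/(\calO_K^\times)^2)_\chi$ inside $\Sel_2(K)_\chi$, which is vacuous here since odd class number forces $\calO_K^\times/(\calO_K^\times)^2=\Sel_2(K)$; so it is not the right hook for the independence. Your ``main obstacle'' paragraph already flags this, and the resolution is simply that the orthogonal decomposition of $V(K)$ makes the per-pair isotropic choices genuinely independent under the uniform model.
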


Second, we consider the situation when $\ell = 3$ or $5$ with no additional assumption on the class number. In this case, Corollary \ref{cor:Kcycsgrkyup}(b) implies that $\sgnrk(\calO_K^\times)=1$ or $\ell$.  
Although complete heuristics for the 2-part of the class group over abelian fields are not known, Malle \cite{Malle} provides results in the case that $\ell = 3$ or $5$. We use the following notation: for $m \in \Z_{\geq 0} \cup \{\infty\}$ and $q \in \R_{>1}$, write $(q)_0 \colonequals 1$ and otherwise $(q)_m \colonequals \prod_{i=1}^m (1-q^{-i})$. Combining these results with a uniform random hypothesis
(\hyperref[item:H2]{H2}), we make the following prediction.  

 \begin{conj}[Conjecture \ref{conj:ell35sgkr0}] \label{conj:ell35sgkr}
Let $\ell = 3$ or $5$ and $q = 2^{\ell-1}$. As $K$ varies over cyclic number fields of degree $\ell$, then
\begin{equation} \label{eqn:alignedsgnrkE}
    \begin{aligned}
\Prob\bigl(  \sgnrk(\calO_K^\times) = 1  \bigr)  &= \left(1+\frac{1}{\sqrt{q}}\right)\frac{(\sqrt{q})_\infty (q^2)_\infty}{(q)_\infty^2} \cdot   \sum_{r = 0}^\infty  \frac{(\ell-1)^{\,r}}{q^{(r^2 + 3r)/2} \cdot (q)_r}   \cdot \frac{q^{r}-1}{q^{r+1}-1}.
\end{aligned}
\end{equation}
\end{conj}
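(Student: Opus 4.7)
The plan is to derive the formula by combining Corollary~\ref{cor:Kcycsgrkyup} with the heuristic (H1) for the image of the $2$-Selmer signature map and the uniform-random hypothesis (H2) specialized to the $2$-part of the class group via Malle's refinement of Cohen--Lenstra. Since $\ell \in \{3,5\}$ and $2$ is a primitive root modulo $\ell$, there is a unique nontrivial $\Ftwobar$-character $\chi$ of $G_K$, with $\F_2(\chi) = \F_q$ for $q = 2^{\ell-1}$, and $\chi$ is self-dual. Theorem~\ref{mainthm::Representationtheory}(a) therefore gives $k_\chi^+(K) = 0$, so Theorem~\ref{thm:oddm1}(b) forces $\sgnrk_\chi(\calO_K^\times) \in \{0,1\}$. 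Decomposing over characters, and using that $\sgnrk_{\chi_0} = 1$ (from $-1 \in \calO_K^\times$), one has $\sgnrk(\calO_K^\times) = 1 + (\ell-1)\sgnrk_\chi(\calO_K^\times)$, so the target event $\sgnrk(\calO_K^\times) = 1$ is equivalent to $\sgnrk_\chi(\calO_K^\times) = 0$.

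Next I would condition on $r \colonequals \rho_\chi(K) = \rk_\chi \Cl(K)$. Since $\calO_K^\times$ has $\Z$-rank $\ell - 1 = [\F_q : \F_2]$, the $\chi$-isotypic component of $\calO_K^\times/(\calO_K^\times)^2$ is a single copy of $\F_q$, so the $\chi$-part of the $2$-Selmer group has $\F_q$-dimension $r + 1$. Under (H1), the image of the $2$-Selmer signature map is modeled as a uniformly random totally isotropic $G_K$-invariant subspace; restricting to the $\chi$-component, the image of $\calO_K^\times/(\calO_K^\times)^2$ is a uniformly random $\F_q$-line in $\F_q^{r+1}$, and the event $\sgnrk_\chi = 0$ corresponds to this line lying in the codimension-one kernel of $\sgn_\infty$, which occurs with probability $(q^r - 1)/(q^{r+1} - 1)$. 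For the prior distribution of $r$, hypothesis (H2) combined with Malle's refinement of Cohen--Lenstra \cite{Malle} for $2$-parts of class groups of cyclic prime-degree fields yields
\[ \Prob(\rho_\chi(K) = r) = \Bigl(1 + \tfrac{1}{\sqrt{q}}\Bigr) \frac{(\sqrt{q})_\infty (q^2)_\infty}{(q)_\infty^2} \cdot \frac{(\ell - 1)^r}{q^{(r^2+3r)/2} (q)_r}, \]
where the normalizing constant reflects the symplectic pairing on $\Cl(K)[2]_\chi$ arising from the Hilbert symbol at $2$ in the self-dual case, and the $(\ell-1)^r$ factor reflects Malle's weighting by $\F_q$-linear automorphisms. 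Summing $\Prob(\rho_\chi = r) \cdot \Prob(\sgnrk_\chi = 0 \mid \rho_\chi = r)$ over $r \geq 0$ via the law of total probability then yields the stated formula.

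The main obstacle is justifying the independence of the two heuristic inputs at the level of the $\chi$-isotypic component: one must argue that the random totally isotropic subspace in (H1) and the random class group structure governed by (H2) can legitimately be treated as independent random objects. This requires verifying that no residual correlation arises from the delicate interplay at $p = 2$, in particular from the unconditional presence of $-1 \in \calO_K^\times$ and from local squares at the prime above $2$ interacting with the signature space at real places. A secondary technical task is pinning down the precise Malle-type normalizing constant, especially the factor $(1 + 1/\sqrt{q})$, which reflects an interplay of symplectic and orthogonal structures on $\Cl(K)[2]_\chi$ distinctive to the self-dual character at $p = 2$ and is where the bulk of the bookkeeping lies.
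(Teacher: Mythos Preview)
Your derivation is essentially the paper's: condition on $r = \rho_\chi(K)$, take Malle's conjecture (Conjecture~\ref{conj:quoteMallequote}) for the prior on $r$, take Conjecture~\ref{conj:sgnrkcondi} for the conditional probability $(q^r-1)/(q^{r+1}-1)$, and sum via total probability. Two corrections, however.

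First, the conditional probability comes from (\hyperref[item:H2prime]{H2${}^\prime$}), not (\hyperref[item:H1]{H1}). Since $\chi$ is self-dual, $k_\chi^+ = 0$ is already a theorem (Corollary~\ref{cor:Wf2gk}), so (\hyperref[item:H1]{H1}) plays no role here; the assertion that the unit line $(\calO_K^\times/(\calO_K^{\times})^2)_\chi$ is a uniformly random $\F_q$-line in $\Sel_2(K)_\chi \simeq \F_q^{r+1}$ is precisely (\hyperref[item:H2prime]{H2${}^\prime$}), and $\sgnrk_\chi = 0$ means this line lands in $\ker(\varphi_{K,\infty})_\chi$, which has $\F_q$-dimension $\rho_\chi^+ = \rho_\chi = r$.

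Second, the Malle constant and the $(\ell-1)^r$ factor are quoted inputs, not something to be rederived: the paper's $(\ell-1)^r/q^{(r^2+3r)/2}$ is exactly Malle's $1/\sqrt{q}^{\,r(r+2)}$ rewritten using $\ell-1 = \sqrt{q}$, so there is no independence to justify or normalizing constant to pin down beyond citing \cite{Malle}. The obstacles you flag are not present in the paper's argument, which simply states ``Combining Conjecture~\ref{conj:quoteMallequote} with Conjecture~\ref{conj:sgnrkcondi} and summing gives the following.''
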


Computing the numerical value of the quantity in (\ref{eqn:alignedsgnrkE}), we predict that approximately $3\%$ of cyclic cubic fields have $\sgnrk(\calO_K^\times)=1$ which yields Conjecture \ref{conj:cubicfields}.
For cyclic quintic fields we predict that this proportion drops to below $0.1\%$.  The predictions in the two conjectures above agree with the computational evidence we compiled: see section \ref{sec:calcs}.  

\begin{remark} \label{rmk:immaterial}
To extend the above conjectures to all odd primes (or more generally, to all abelian groups $G$ of odd order), we would need to refine the heuristics of Malle \cite{Malle,Malle1} to predict 
the distribution of $\rk_{\chi} \Cl(K)$.  This distribution will  depend on the representation theory of $\bZ/\ell\bZ$ (or more generally, of $G$); in particular, the constraints in Theorem \ref{thm:KGal} must be respected.  In contrast, when $2$ is a primitive root modulo $\ell$, there is only one nontrivial (necessarily self-dual) $\F_2[\bZ/\ell\bZ]$-module, so these representation-theoretic complexities are immaterial; in this case, we expect that the generalization of the above conjectures to such $\ell$ to be more straightforward.
\end{remark}

We expect that as $\ell \to \infty$ varies over odd primes, we have $\Prob\bigl(\sgnrk(\calO_K^\times) = 1\bigr) \to 0\%$, and we plan to give evidence to support this limiting behavior in the future (see also Remark \ref{rem:malle}).  

\begin{remark}
The statements we prove and conjecture above on unit signature ranks in odd degree extensions are quite different than the situation for real quadratic fields, related to solutions to the negative Pell equation.  By genus theory, 100\% of real quadratic fields have a totally positive unit \cite{FouvryKluners}, and the conjectural asymptotic due to Stevenhagen \cite{Stevenhagen} arises from an apparently different heuristic involving R\'edei matrices. 
\end{remark}

\begin{remark}
We are not aware of a function field analogue which would bear on the conjectures presented in this section.  These conjectures are based on structural properties of the $2$-Selmer signature map, which rely in an essential way on the fact that $2 \in \calO_K$ is neither a unit nor zero.  
\end{remark}

\subsection{Outline} 

In section \ref{sec:notation}, we set up basic notation and background.  In section \ref{sec:galmodstruct}, we study these objects in general as Galois modules over $\F_2$.  We then restrict to the case of odd Galois extensions in section \ref{sec:galmodstruct_odd} and show how reflection principles follow from the Galois action and Kummer duality---these are for completeness (and to indicate that they are not missing from our model).  We then further restrict to abelian extensions and in section \ref{sec:Galois} prove our main structural result, and we see classical reflection principles as a corollary.  In section \ref{sec:models} we introduce our heuristic assumptions and present our conjectures, including details on the low-degree cases. In section \ref{sec:calcs}, we carry out computations that provide some experimental evidence for our conjectures. Finally, in appendix \ref{sec:infincub} we prove Theorem \ref{thm:thmb}.

\subsection{Acknowledgements} 

The authors would like to thank Edgar Costa, David Dummit, Noam Elkies, Georges Gras, Brendan Hassett, Hershy Kisilevsky, Evan O'Dorney, Arul Shankar, Jared Weinstein, and Melanie Matchett Wood for comments, and Tommy Hofmann for sharing his list of cyclic septic fields.  Special thanks go to two anonymous referees for their excellent and detailed feedback and suggestions.
Varma was partially supported by an NSF MSPRF Grant (DMS-1502834) and an NSF Grant (DMS-1844206).   Voight was supported by an NSF CAREER Award (DMS-1151047) and a Simons Collaboration Grant (550029).  Elkies was partially supported by an NSF grant (DMS-1502161) and a Simons Collaboration Grant.

\section{Properties of the 2-Selmer group and its signature spaces} \label{sec:notation}

We begin by setting up some notation and recalling basic definitions and previous results.  

\subsection{Basic notation}

If $A$ is a (multiplicatively written) abelian group and $m \in \Z_{>0}$, we write 
\[ A[m] \colonequals \{a \in A : a^m=1\} \]
for the $m$-torsion subgroup of $A$. For a prime $p$, we write
 $$\rk_p (A) \colonequals \dim_{\F_p} (A/A^p)$$ 
 for the \defi{$p$-rank} of $A$; we then have $\#A[p] = p^{\rk_p(A)}$.
 
\smallskip

We quickly prove a standard lemma (for lack of a reference).  Let $\Z_{(p)} \colonequals \{a/b \in \Q : p \nmid b\}$ be the localization of $\Z$ away from a prime $p$.

\begin{lemma}\label{lem:genfact}
Let $G$ be a finite group, let $p \nmid \#G$ be prime, and let $M$ be a
finitely generated, torsion $\Z_{(p)}[G]$-module.  Then there is a
(noncanonical) isomorphism $M/pM \xrightarrow{\sim} M[p]$ as $\F_p[G]$-modules.
\end{lemma}

\begin{proof}
Recall (by Maschke's theorem) that every finitely generated
$\F_p[G]$-module is semisimple, since $p \nmid \#G$.  
Let $m=p^r$ be the exponent of $M$ (as an abelian group), with $r \geq
0$.  We argue by induction on $r$.   If $r \leq 1$, then $pM=\{0\}$ so
indeed $M/pM=M=M[p]$.

Suppose the result holds whenever $M$ has exponent dividing $p^r$; we
prove it for $M$ of exponent $p^{r+1}$.  Multiplication by $p$ gives
an exact sequence
\[ 0 \to M[p] \to M \to pM \to 0 \]
of $\Z_{(p)}[G]$-modules.  We can repeat this with $pM$, giving the
following diagram, with exact rows and columns:
\begin{equation}
\begin{tikzcd}
 & 0 \ar{d} & 0 \ar{d} & 0 \ar{d} \\
  0 \ar{r} & (pM)[p] \ar{d} \ar{r} & pM \ar{r} \ar{d}  & p^2M \ar{r}
\ar{d} & 0 \\
  0 \ar{r} & M[p] \ar{d} \ar{r} & M \ar{r} \ar{d} & pM \ar{r} \ar{d} & 0 \\
  0 \ar{r} & M_1 \ar{d} \ar{r} & M/pM \ar{r} \ar{d} & pM/p^2M \ar{r}
\ar{d} & 0 \\
 & 0 & 0 & 0
  \end{tikzcd}
  \end{equation}
Here, $M_1 \colonequals \coker((pM)[p] \to M[p]) \simeq M[p]/(pM)[p]$.
By semisimplicity, the left vertical and bottom horizontal maps split, so
\begin{equation}
\begin{aligned}
M[p] &\simeq (pM)[p] \oplus M_1 \\
M/pM &\simeq (pM/p^2M) \oplus M_1
\end{aligned}
\end{equation}
Since $pM$ has exponent $p^r$, by induction $(pM)[p] \simeq (pM/p^2M)$ so $M[p] \simeq M/pM$.
\end{proof}

\smallskip

Let $K$ be a number field of degree $n=[K:\Q]$ with $r_1$ real and $r_2$ complex places, with algebraic closure $\Kbar$ and with ring of integers $\cO_K$.  For a prime $p$, we denote the localization of $\cO_K$ away from $(p)$ by 
    $$\cO_{K,(p)} \colonequals \{\alpha \in K^\times : \ord_\frakp(\alpha) \geq 0 \text{ for all primes $\frakp \mid (p)$}\}
$$
and the completion of $\cO_K$ at $p$ by $\cO_{K,p} \colonequals \cO_K \otimes \bZ_{p}$, so that $\calO_{K,(p)} \hookrightarrow \calO_{K,p}$.  For a place $v$ of $K$, we let $K_v$ denote the completion of $K$ at $v$ and $\calO_{K,v}$ its valuation ring, and we let $$(\_,\_)_v \colon K_v^\times \times K_v^\times \to \{\pm 1\}$$ denote the Hilbert symbol at $v$: recall that for $\alpha_v$ and $\beta_v \in K_v^\times$, we have $(\alpha_v,\beta_v)_v = 1$ if and only if $\beta_v$ is in the image of the norm map from $K_v[x]/(x^2-\alpha_v)$ to $K_v$.  

\subsection{The 2-Selmer group and its signature spaces} \label{sec:2selsigdefs}
 
The main object of study is the \defi{$2$-Selmer group} of a number field $K$, defined as
\begin{equation*}
\Sel_2(K) \colonequals \{ \alpha \in K^{\times} : (\alpha) = \frakaI^2 \text{ for a fractional ideal } \frakaI \mbox{ of }K\} /(K^{\times})^2.
\end{equation*}
 Following Dummit-Voight \cite[Section 3]{DV}, we recall two signature spaces that keep track of behavior at $\infty$ and at $2$, as follows.

\begin{definition}\label{def:VooK} The \defi{archimedean signature space} $V_\infty(K)$ is defined as
\begin{equation*} 
V_\infty(K) \colonequals \prod_{\text{$v$ real}} \{\pm 1\} \simeq \prod_{v \mid \infty} K_v^\times/K_v^{\times 2}
\end{equation*}
where the second product runs over all real places of $K$.  The \defi{archimedean signature map} is 
\begin{equation*}
    \begin{aligned}
\sgn_\infty \colon K^\times &\to V_\infty(K) \\
\alpha &\mapsto (\sgn v(\alpha))_v
\end{aligned}
\end{equation*}
where $\sgn x = x/\abs{x}$ for $x \in \R^\times$.
\end{definition}
By definition, $\ker \sgn_\infty = K_{>0}^\times$, the \defi{totally positive elements} of $K$, which contains $(K^{\times})^2$, and so the map $\sgn_\infty$ induces a well-defined map $\varphi_{K,\infty}: \Sel_2(K) \rightarrow V_\infty(K)$.  The product of Hilbert symbols defines a map
\begin{equation*} 
\begin{aligned} 
b_\infty \colon V_\infty(K) \times V_\infty(K) & \to \{\pm 1\} \\  
\end{aligned} 
\end{equation*}
which is a (well-defined) symmetric, non-degenerate $\F_2$-bilinear form. 
 
\begin{definition}\label{def:V2} The \defi{2-adic signature space} $V_2(K)$  is defined as
\begin{equation*} 
V_2(K) \colonequals \calO_{K,(2)}^\times/(1+4\calO_{K,(2)})(\calO_{K,(2)}^{\times})^2.
\end{equation*}
The \defi{$2$-adic signature map} is the map
\begin{equation*}
 \sgn_2 \colon \calO_{K,(2)}^\times \to V_2(K)
 \end{equation*}
obtained from 
the projection $\cO_{K,(2)}^\times \rightarrow V_2(K)$.
\end{definition}

For the following statements we refer to Dummit--Voight \cite[\S 4]{DV}.  
We have $\dim_{\F_2}V_2(K)=n$ 
and there is an isomorphism of abelian groups
	\begin{equation*}
	V_2(K) \simeq \prod_{v \mid (2)} \cO_{K,v}^\times/ (1 + 4 \cO_{K,v})(\cO_{K,v}^\times)^2.
	\end{equation*}
		 Under this identification, the product of Hilbert symbols defines a map
\begin{equation*}
\begin{aligned} 
b_2 \colon V_2(K) \times V_2(K) & \to \{\pm 1\}    \\
 \big( (\alpha_v)_v, (\beta_v)_v \big) &\mapsto \prod_{v \mid (2)}  (\alpha_v,\beta_v)_v,   
\end{aligned} 
\end{equation*}
which is a nondegenerate, symmetric $\F_2$-bilinear form on $V_2(K)$. 
Every class in $\Sel_2(K)$ has a representative $\alpha$ such that $\alpha \in \calO_{K,(2)}^\times$, unique up to multiplication by an element of $(\cO_{K,(2)}^{\times})^2$;  
therefore, the map $\sgn_2$ induces a well-defined map $\varphi_{K,2}\colon \Sel_2(K) \rightarrow V_2(K)$.
  
\medskip

Putting these together, we define the \defi{2-Selmer signature space} as the orthogonal direct sum
\begin{equation*}
V(K) \colonequals V_\infty(K) \boxplus V_2(K)
\end{equation*}
and write $b \colonequals b_\infty \perp b_2$ for the bilinear form on $V(K)$.  The {isometry group} of $(V(K),b)$ is the product of the isometry groups (or equivalently, the subgroup of the total isometry group preserving each factor).
Equipped with $b$, the $2$-Selmer signature space $V(K)$ is a nondegenerate symmetric bilinear space over $\F_2$ of dimension $r_1+n$.  Similarly, we define the \defi{$2$-Selmer signature map} 
\begin{equation} \label{eqn:varphiKdef}
\varphi_K \colonequals \varphi_{K,\infty} \perp \varphi_{K,2} \colon \Sel_2(K) \to V(K). 
\end{equation}

\begin{theorem}[{Dummit--Voight \cite[Theorem 6.1]{DV}}] \label{Thm:MaximalIsotropic}
For a number field $K$, the image of the $2$-Selmer signature map $\varphi_K$ is a maximal totally isotropic subspace.
\end{theorem}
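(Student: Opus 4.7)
The plan is to prove Theorem \ref{Thm:MaximalIsotropic} in two steps: first that $\im\varphi_K$ is totally isotropic under $b$, and second that its $\F_2$-dimension attains $r_1+r_2$. Since $V(K)$ is nondegenerate of dimension $r_1+n=2(r_1+r_2)$, any totally isotropic subspace has dimension at most $r_1+r_2$, so an isotropic subspace of that dimension is automatically maximal.

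For total isotropy, I would appeal to Hilbert reciprocity. Given $\alpha,\beta\in\Sel_2(K)$ lifted to $K^\times$, at each finite place $v\nmid 2$ both $\alpha$ and $\beta$ have even $v$-adic valuation by the defining condition of $\Sel_2(K)$; hence modulo $(K_v^\times)^2$ each is represented by a local unit, and since $v$ is non-dyadic the tame Hilbert symbol of two local units is trivial, giving $(\alpha,\beta)_v=1$. Global reciprocity $\prod_v(\alpha,\beta)_v=1$ then collapses to $\prod_{v\mid 2\infty}(\alpha,\beta)_v=1$, which by the definitions of $b_\infty$ and $b_2$ is precisely $b(\varphi_K(\alpha),\varphi_K(\beta))=1$.

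For the dimension count, I would use the standard exact sequence
\[
0\longrightarrow \calO_K^\times/(\calO_K^\times)^2\longrightarrow \Sel_2(K)\xrightarrow{\ \pi\ }\Cl(K)[2]\longrightarrow 0,
\]
where $\pi$ sends $\alpha$ to the class of $\frakaI$ with $(\alpha)=\frakaI^2$, and the leftmost injectivity uses $\calO_K^\times\cap(K^\times)^2=(\calO_K^\times)^2$ since $\calO_K$ is integrally closed. Together with $\dim_{\F_2}\calO_K^\times/(\calO_K^\times)^2=r_1+r_2$ from Dirichlet's unit theorem and $\{\pm1\}\subseteq\calO_K^\times$, this yields $\dim_{\F_2}\Sel_2(K)=r_1+r_2+\rk_2\Cl(K)$, reducing the theorem to showing $\dim_{\F_2}\ker\varphi_K=\rk_2\Cl(K)$.

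The decisive step is to identify $\ker\varphi_K$ with $\Cl(K)/2\Cl(K)\cong\Cl(K)[2]$ via class field theory. For $\alpha\in\ker\varphi_K$, the extension $K(\sqrt{\alpha})/K$ is unramified everywhere: at real places because $\alpha$ is totally positive; at finite $v\nmid 2$ because $\ord_v(\alpha)$ is even, so modulo squares $\alpha$ is a local unit and the resulting quadratic extension has unit discriminant in odd residue characteristic; and at dyadic $v\mid 2$ because $\alpha\in(\calO_{K,v}^\times)^2(1+4\calO_{K,v})$ allows an integral basis of the form $\{1,(1+\sqrt{\alpha})/2\}$ with unit discriminant in $K_v(\sqrt{\alpha})$. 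Class field theory then supplies a bijection between degree-$\leq 2$ unramified abelian extensions of $K$ and $\Cl(K)/2\Cl(K)$, and the reverse direction lifts each such extension back to an element of $\ker\varphi_K$ via Kummer theory, matching the local conditions that define $\ker\varphi_K$ to the unramifiedness of the extension at each place. The main obstacle will be the dyadic matching: verifying, for $u\in\calO_{K,v}^\times$ at $v\mid 2$, the equivalence of $K_v(\sqrt{u})/K_v$ being unramified with $u\in(\calO_{K,v}^\times)^2(1+4\calO_{K,v})$. This is precisely where the quotient by $1+4\calO_{K,(2)}$ in the definition of $V_2(K)$ is engineered to encode the dyadic part of the unramified condition, and it requires direct local discriminant analysis in residue characteristic $2$.
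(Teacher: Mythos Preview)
The paper does not supply its own proof of this theorem: it is quoted verbatim from Dummit--Voight \cite[Theorem 6.1]{DV} and used as a black box, so there is no in-paper argument to compare against. Your proposal is a correct sketch and is in fact the standard route (and essentially the one in \cite{DV}): Hilbert reciprocity for total isotropy, and the identification of $\ker\varphi_K$ with the Kummer classes of everywhere-unramified quadratic extensions of $K$ for the dimension count. The paper itself records the output of your second step a few lines later as the perfect Kummer pairing $\Cl(K)/\Cl(K)^2 \times \ker(\varphi_K)\to\{\pm1\}$ \eqref{eqn:Kummer}, again citing \cite[(3.11)]{DV}, so your argument dovetails with what the surrounding text already assumes.

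One small comment on the dyadic step, since you flag it as the main obstacle: the equivalence ``$K_v(\sqrt{u})/K_v$ unramified $\Longleftrightarrow$ $u\in(\calO_{K,v}^\times)^2(1+4\calO_{K,v})$'' for $v\mid 2$ and $u\in\calO_{K,v}^\times$ is exactly right, and your discriminant argument via $\tfrac{1+\sqrt{u}}{2}$ handles $(\Leftarrow)$; for $(\Rightarrow)$ it is cleanest to note that both sides cut out index-$2$ subgroups of $\calO_{K,v}^\times/(\calO_{K,v}^\times)^2$ (the left by local class field theory, the right by a Hensel/filtration count), so it suffices to check one nontrivial inclusion. With that in hand your sketch is complete.
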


Recall from the introduction that the class group of $K$ is denoted by $\Cl(K)$, its narrow class group is denoted by $\Cl^+(K)$, and its ray class group of conductor 4 by $\Cl_4(K)$.

\begin{definition} \label{defn:isotropyranks}
The \defi{archimedean isotropy rank} of a number field $K$ is 
$$k^+(K) \colonequals \rk_2 \Cl^+(K) - \rk_2 \Cl(K),$$
and the \defi{$2$-adic isotropy rank} of $K$ is
$$k_4(K) \colonequals \rk_2 \ClfourK - \rk_2 \Cl(K).$$
\end{definition}

By Dummit--Voight \cite[Theorem 6.1]{DV}, we have
\begin{equation*}\label{eqn:isotropyranks}
\begin{aligned}
k^+(K) &= \dim_{\F_2} \im(\varphi_K) \cap V_\infty = \dim_{\F_2} \ker(\varphi_{K,2}) - \dim_{\F_2} \ker(\varphi_K) \\ 
k_4(K) &= \dim_{\F_2} \im(\varphi_K) \cap V_2 = \dim_{\F_2} \ker(\varphi_{K,\infty}) - \dim_{\F_2} \ker(\varphi_K)
\end{aligned}
\end{equation*} 
hence the nomenclature given in Definition \ref{defn:isotropyranks}. Moreover, there is a classical equality
\begin{equation} \label{eqn:k4kpc}
k_4(K) = k^+(K) + r_2
\end{equation}
(see for example, Theorem 2.2 of Lemmermeyer \cite{Lemmermeyer} and also Theorem \ref{thm:RDRP}  below).  

\subsection{Connections to $\Sel_2(K)$ via class field theory}

There is a natural, well-defined map $\Sel_2(K) \rightarrow \Cl(K)[2]$ sending $[\alpha] \mapsto [\frakaI]$ where $\frakaI^2 = (\alpha)$; this map is surjective and fits into the exact sequence 
    \begin{equation}\label{eq:2ses} 1 \rightarrow \cO_K^\times/(\cO_K^{\times})^2 \rightarrow \Sel_2(K) \rightarrow \Cl(K)[2] \rightarrow 1.
    \end{equation}
    
In addition, the $2$-Selmer signature map arises naturally in class field theory as follows.  Let $H \supseteq K$ be the Hilbert class field of $K$. Class field theory provides an isomorphism $\Gal(H\,|\,K) \simeq \Cl(K)$; let $H^{(2)}$ 
denote the fixed field of the subgroup $\Cl(K)^2$. The Kummer pairing
\begin{equation*}
    \begin{aligned}
    \Gal(H^{(2)}\,|\,K) \times \ker(\varphi_K) &\to \{\pm 1\} \\
    (\tau,[\alpha]) &\mapsto \frac{\tau(\sqrt{\alpha})}{\sqrt{\alpha}}
    \end{aligned}
\end{equation*}
is (well-defined and) perfect \cite[(3.11)]{DV}. The Artin reciprocity map provides a canonical isomorphism $\Gal(H^{(2)}\,|\,K) \simeq \Cl(K)/\!\Cl(K)^2$ and so we can rewrite the above map instead as  
\begin{equation} \label{eqn:Kummer}\begin{aligned}
 \Cl(K)/\!\Cl(K)^2  \times  \ker(\varphi_K) & \to \{\pm1\}. \\
 \end{aligned} 
 \end{equation}
 
 The pairing \eqref{eqn:Kummer} is the first of four perfect pairings \cite[Lemma 3.10]{DV} (see also Lemmermeyer \cite[Theorem 6.3]{Lemmermeyer}); the other three perfect pairings are  \begin{equation}\label{eqn:Kummer2}
 \begin{aligned}
 \ClfourK/\!\ClfourK^2  \times  \ker(\varphi_{K,\infty}) & \to \{\pm1\}, \\
 \Cl^+(K)/\!\Cl^+(K)^2  \times  \ker(\varphi_{K,2}) & \to \{\pm1\}, \mbox{ and} \\
 \ClfourKplus/\!\ClfourKplus^2  \times \Sel_2(K) & \to  \{\pm1\},
 \end {aligned}
 \end{equation}
where $\ClfourKplus$ denotes the ray class group of $K$ of  conductor $4\cdot\infty$.
 
\section{Galois module structures} \label{sec:galmodstruct}

We next study the Galois module structure on the arithmetic objects introduced in the previous section; we will continue in the next section with more precise results in the odd degree case.  Our results overlap substantially with those of Taylor \cite{Taylor}.

\smallskip

From now on, suppose that $K$ is Galois over $\Q$, with Galois group $G_K \colonequals \Gal(K\,|\,\Q)$.  We work throughout with left $\F_2[G_K]$-modules.  (We could consider more generally structures implied by the action of a nontrivial automorphism group $\Aut(K)$, and many of the results below could be generalized to this setting; we focus here on the extreme case, where $\Aut(K)$ is as large as possible.)

\subsection{Basic invariants}

We first prove Galois invariance of the signature spaces in generality.  Recall that a $\F_2$-bilinear form $b \colon V \times V \to \F_2$ on an $\F_2[G]$-module $V$ is \defi{$G$-invariant} if $b(\alpha,\beta) = b(\sigma(\alpha),\sigma(\beta))$ for all $\sigma \in G$.

\begin{proposition} \label{prop:GstructV2}
The following statements hold.
\begin{enumalph}
\item If $K$ is totally real, then $V_\infty(K) \simeq \F_2[G_K]$ as $\F_2[G_K]$-modules; otherwise, $V_\infty(K)$ is trivial.  In either case, the bilinear form $b_\infty$ is $G_K$-invariant.
\item We have $V_2(K) \simeq \F_2[G_K]$ as $\F_2[G_K]$-modules, and $b_2$ is $G_K$-invariant.
\end{enumalph}
\end{proposition}

\begin{proof}
We begin with (a), and suppose that $K$ is totally real.  The Galois group $G_K$ acts on $V_\infty(K)$ (on the left) via its permutation action on the (index) set of real places of $K$ (as $v \mapsto v \circ \sigma^{-1}$), so $V_\infty(K) \simeq \F_2[G_K]$ as $\F_2[G_K]$-modules.  Since $b_\infty$ is defined as the product over real places $v$, it is $G_K$-invariant.  

For (b), we follow the proof in Dummit--Voight \cite[Proposition 4.4]{DV}. The map $a \mapsto 1+2a$ induces an isomorphism
\begin{equation*}
\calO_{K,(2)}/2\calO_{K,(2)} \xrightarrow{\sim} \bigl(\calO_{K,(2)}^\times/(1+4\calO_{K,(2)})\bigr)[2] 
\end{equation*}
which is visibly $G_K$-equivariant.  By Lemma \ref{lem:genfact}, the right hand side is isomorphic to $V_2(K)$ as $\F_2[G_K]$-modules, since $(1+4\calO_{K,(2)})^2 \leq 1+8\calO_{K,(2)} \leq \calO_{K,(2)}^{\times 2}$.

Finally, we show $b_2$ is $G_K$-invariant.  Let $\alpha,\beta \in \cO_{K,(2)}^\times$ and let $v \mid 2$ be a prime of $K$.  
Since $G_K$ acts transitively (on the left) on the set of places $\{v : v \mid (2)\}$ with stabilizers $D_v \colonequals \Aut(K_v)$ the decomposition group, choosing a place $v$ we have
\[ b_2(\alpha,\beta) = \prod_{\tau D_v \in G_K/D_v} (\alpha,\beta)_{\tau(v)} \]
well defined.  
The Hilbert symbol $(\_, \_)_v$ is $G_K$-equivariant and $D_v$-invariant, so for $\sigma \in G_K$,
\[ b_2(\sigma(\alpha),\sigma(\beta)) = \prod_{\tau D_v \in G_K/D_v} (\sigma(\alpha),\sigma(\beta))_{\tau(v)} = \prod_{\tau} (\alpha,\beta)_{(\sigma^{-1}\tau)(v)} = \prod_{\tau} (\alpha,\beta)_{\tau(v)} = b_2(\alpha,\beta) \]
since $\sigma$ permutes the cosets of $D_v$ in $G_K$.
\end{proof}

\begin{lemma}  \label{lem:2selmequiv}
The $2$-Selmer signature map $\varphi_K$ is $G_K$-equivariant. 
\end{lemma}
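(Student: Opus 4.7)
The plan is to observe that $\varphi_K = \varphi_{K,\infty} \perp \varphi_{K,2}$, so it suffices to verify $G_K$-equivariance of each of $\varphi_{K,\infty}$ and $\varphi_{K,2}$ separately, with respect to the $G_K$-action on $V_\infty(K)$ and $V_2(K)$ set up in Proposition \ref{prop:GstructV2}. First I would note that the $G_K$-action on $\Sel_2(K)$ is inherited from the natural Galois action on $K^\times/(K^\times)^2$, and is well-defined because the condition $(\alpha)=\frakaI^2$ is $G_K$-stable (for $\sigma \in G_K$, $(\sigma\alpha) = \sigma(\frakaI)^2$).

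For the archimedean factor, I would unwind the definitions: recall that the $G_K$-action on $V_\infty(K)$ is the permutation action on real places $v \mapsto v\circ \sigma^{-1}$. Then for $\sigma \in G_K$ and $\alpha \in K^\times$, we have $v(\sigma\alpha)/|v(\sigma\alpha)| = (v\circ\sigma)(\alpha)/|(v\circ\sigma)(\alpha)|$, which says precisely that $\sgn_\infty(\sigma\alpha) = \sigma \cdot \sgn_\infty(\alpha)$. This descends to $\Sel_2(K)$ since $(K^\times)^2$ is $G_K$-stable, giving $G_K$-equivariance of $\varphi_{K,\infty}$.

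For the 2-adic factor, each class in $\Sel_2(K)$ admits a representative $\alpha \in \calO_{K,(2)}^\times$, and for any $\sigma \in G_K$, $\sigma(\alpha) \in \calO_{K,(2)}^\times$ as well (since $G_K$ permutes the primes above $2$). The defining projection $\calO_{K,(2)}^\times \to V_2(K)$ is $G_K$-equivariant because the subgroup $(\calO_{K,(2)}^\times)^2(1+4\calO_{K,(2)})$ being quotiented out is evidently $G_K$-stable. Combined with the freedom (up to squares) in choosing the representative $\alpha$, which is $G_K$-stable, this gives that $\varphi_{K,2}$ is well-defined and $G_K$-equivariant.

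I do not expect real obstacles here: the statement is essentially tautological, since both $\sgn_\infty$ and $\sgn_2$ are constructed from Galois-natural data (embeddings into $\R$ and local completions at primes above $2$) and all relevant subgroups are $G_K$-stable. The only point requiring a small amount of care is the compatibility of the choice of representative $\alpha \in \calO_{K,(2)}^\times$ with the Galois action in the definition of $\varphi_{K,2}$, but this is immediate once one notes that $G_K$ preserves $\calO_{K,(2)}^\times$.
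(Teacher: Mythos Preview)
Your proposal is correct and follows essentially the same approach as the paper's proof: both handle $\varphi_{K,\infty}$ and $\varphi_{K,2}$ separately, checking the archimedean part via the identity $v(\sigma\alpha)=(v\circ\sigma)(\alpha)$ and the $2$-adic part by observing that the subgroup $(\calO_{K,(2)}^\times)^2(1+4\calO_{K,(2)})$ is $G_K$-stable. You supply a bit more detail on well-definedness, but the argument is the same.
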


\begin{proof} 
We show that both $\sgn_\infty$ and $\sgn_2$ are $G$-equivariant which implies that the induced maps $\varphi_{K,\infty}$ and $\varphi_{K,2}$ are $G$-equivariant  as well. For $\sgn_\infty$, we may suppose that $K$ is totally real, and then
\begin{equation*} 
\sgn_\infty(\sigma(\alpha))=(\sgn(v(\sigma(\alpha))))_v=(\sgn((\sigma^{-1}v)(\alpha)))_{v}=\sigma(\sgn_\infty(\alpha)) 
\end{equation*}
so $\sgn_\infty$ is $G_K$-equivariant. 

To show that $\sgn_2$ is $G_K$-equivariant, we observe that $\sgn_2$ is simply the composition of a natural embedding and projection.  
\end{proof}

\begin{corollary} \label{cor:Gkinv}
For a Galois number field $K$, the image of the $2$-Selmer signature map $\varphi_K$ is a $G_K$-invariant maximal totally isotropic subspace.
\end{corollary}

\begin{proof}
Combine Theorem \ref{Thm:MaximalIsotropic} with Proposition \ref{prop:GstructV2} and Lemma \ref{lem:2selmequiv}.
\end{proof}

\subsection{Duals and pairings}  \label{sec:dual_all}

We now treat some issues of duality, with an application to the Kummer pairing.  Let $G$ be a finite group and let $V$ be a finitely generated (left) $\F_2[G]$-module.

\begin{definition}\label{def:dual} The \defi{dual} of $V$ is the $\F_2$-vector space 
\begin{equation*} 
V\spcheck \colonequals \Hom_{\F_2}(V,\F_2)
\end{equation*}
equipped with the (left) $\F_2[G]$-action, arising from extending $\F_2$-linearly the natural $G$-action: if $\sigma \in G$, $f \in V\spcheck$, and $x \in V$, then $(\sigma f)(x) \colonequals f(\sigma^{-1} x)$.  We say $V$ is \defi{self-dual} if $V \simeq V\spcheck$ as $\F_2[G]$-modules.
\end{definition}  

The canonical evaluation pairing
\begin{equation} \label{eqn:natpairing}
\begin{aligned}
    e \colon V\spcheck \times V &\to \F_2 \\
    e(f,x) &= f(x)  
    \end{aligned}
\end{equation}
is nondegenerate and $G$-invariant, so gives a canonical isomorphism $V \xrightarrow{\sim} (V\spcheck)\spcheck$ as $\F_2[G]$-modules.   

\begin{lemma} \label{lem::AllKummerPairings}
For $K$ a Galois number field, the Kummer pairings \eqref{eqn:Kummer}--\eqref{eqn:Kummer2} induce canonical isomorphisms of $\F_2[G_K]$-modules:
 \begin{equation} \label{eqn:ClClK2} 
 \begin{array}{rcl}
 \Cl(K)/\!\Cl(K)^2  &\simeq& \ker(\varphi_K)\spcheck  \\
  \ClfourK/\!\ClfourK^2 & \simeq&  \ker(\varphi_{K,\infty})\spcheck  \\
 \Cl^+(K)/\!\Cl^+(K)^2 & \simeq &  \ker(\varphi_{K,2})\spcheck  \\
 \ClfourKplus/\!\ClfourKplus^2 & \simeq&  \Sel_2(K)\spcheck 
\end{array}
 \end{equation}
\end{lemma}

\begin{proof}
We work with the first line, the others follow by the same argument.
The Kummer isomorphism
\begin{equation} 
K^\times/K^{\times 2} \xrightarrow{\sim} \Hom(\Gal(\Kbar\,|\,K),\{\pm 1\}) 
\end{equation}
is $G_K$-equivariant and defines a canonical isomorphism $\ker(\varphi_K) \xrightarrow{\sim} \Hom(\Gal(Q\,|\,F),\{\pm 1\})$, where $Q$ is the maximal subfield whose Galois group has exponent dividing 2 in the Hilbert class field of $K$.  The Artin map defines a canonical $G_K$-equivariant isomorphism $\Gal(Q\,|\,F) \xrightarrow{\sim} \Cl(K)/\Cl(K)^2$.  Combining these with the evaluation map then gives a canonical pairing 
\begin{equation}
\Cl(K)/\Cl(K)^2 \times \ker(\varphi_K) \to \{\pm 1\}
\end{equation}
as claimed.  This pairing may be explicitly described as
\[ ([\fraka], [\alpha]) \mapsto \legen{\alpha}{\fraka} \]
where $\fraka \subseteq \calO_K$ is an ideal of odd norm, $\alpha \in \calO_K$ is coprime to $\fraka$, and $\displaystyle{\legen{\alpha}{\fraka}}$ is the Jacobi symbol.
\end{proof} 

Applying Lemma \ref{lem:genfact} to the groups on the left-hand side of \eqref{eqn:ClClK2} gives (now noncanonical) $\F_2[G_K]$-module isomorphisms $\Cl(K)[2] \simeq \ker(\varphi_K)\spcheck$, etc.

\begin{lemma} \label{lem:OrthoDecomp1} 
Let $b \colon V \times V \to \F_2$ be a $G$-invariant $\F_2$-bilinear form, and let $W,W' \subseteq V$ be irreducible $\F_2[G]$-modules.  If $W\spcheck \not\simeq W'$ as $\F_2[G]$-modules, then $b(W,W')=\{0\}$.  
\end{lemma}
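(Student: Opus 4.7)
The plan is to reinterpret the bilinear form as an $\F_2[G]$-linear map between $W'$ and $W\spcheck$, and then invoke Schur's lemma.

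First, I would define
\[ \phi \colon W' \to W\spcheck, \qquad w' \mapsto b(\_,w')|_W, \]
which makes sense since $b(\_,w')|_W$ is an $\F_2$-linear functional on $W$. Next, I would check that $\phi$ is $\F_2[G]$-equivariant using the $G$-invariance of $b$: for $\sigma \in G$, $w' \in W'$, and $w \in W$,
\[ \phi(\sigma w')(w) = b(w, \sigma w') = b(\sigma^{-1} w, w') = \phi(w')(\sigma^{-1} w) = (\sigma \phi(w'))(w), \]
so $\phi(\sigma w') = \sigma \phi(w')$ as elements of $W\spcheck$.

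Now I would invoke irreducibility. Since the contravariant functor $V \mapsto V\spcheck$ preserves irreducibility (it is an anti-equivalence on finite-dimensional $\F_2[G]$-modules), $W\spcheck$ is irreducible. Then $\phi$ is an $\F_2[G]$-linear map between two irreducible modules, so $\ker \phi$ is either $0$ or all of $W'$, and likewise for the image. If $\phi$ were nonzero, it would be both injective and surjective, hence an isomorphism $W' \isom W\spcheck$, contradicting the hypothesis. Therefore $\phi = 0$, which is precisely the statement $b(W, W') = \{0\}$.

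The only real subtlety is ensuring that $W\spcheck$ is irreducible when $W$ is, but this is standard for modules over semisimple group algebras (and here $\#G$ is odd, so $\F_2[G]$ is semisimple by Maschke); otherwise the argument is a direct application of Schur's lemma and should present no obstacle.
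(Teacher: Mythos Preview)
Your proof is correct and follows essentially the same approach as the paper: both construct the $\F_2[G]$-module map $W' \to W\spcheck$ given by $w' \mapsto b(\_,w')$ and apply Schur's lemma. You simply spell out the equivariance check and the irreducibility of $W\spcheck$ in more detail than the paper does.
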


\begin{proof}
Restricting $b$, we obtain an $\F_2[G]$-module map $W' \to W\spcheck$ by $w' \mapsto b(\_,w')$; by Schur's lemma, this map is either zero or an isomorphism, and the result follows.
\end{proof}

Lemma \ref{lem:OrthoDecomp1}, although easy to prove, is fundamental in what follows: it shows that when a decomposition of $V$ into irreducibles is possible, it is already \emph{almost} an orthogonal decomposition.  

To conclude this section, we refine this into a canonical orthogonal decomposition.  Suppose $G$ has \emph{odd} order, so the category of $\F_2[G]$-modules is semisimple.  Let $W$ be an irreducible $\F_2[G]$-module.  We write $V_W$ for the $W$-isotypic component of $V$ in a decomposition of $V$ into irreducibles.  Suppose that $V$ is equipped with a symmetric, $G$-invariant, $\F_2$-bilinear form.  Then by Lemma \ref{lem:OrthoDecomp1} we have a canonical decomposition as $\F_2[G]$-modules
\begin{equation} \label{eqn:candecomp}
V \simeq \bigboxplus_{W} \,(V_W + V_{W\spcheck}), 
\end{equation}
where the orthogonal direct sum is indexed by irreducibles $W$ up to isomorphism and duals.  We call the decomposition given in \eqref{eqn:candecomp} the \defi{canonical orthogonal decomposition} of $V$.

\section{Galois module structures for odd degree extensions} \label{sec:galmodstruct_odd}

In this section, we suppose throughout that $K$ has \emph{odd} degree (but remains Galois).  Then $K$ is totally real and the only roots of unity in $K$ are $\pm 1$.  Moreover, since $\#G_K$ is odd, the category of left $\F_2[G_K]$-modules is semisimple.  

\subsection{Basic invariants}

We quickly prove two standard lemmas, for completeness.  

\begin{lemma}\label{lem:units}  
We have $\cO_K^\times/(\cO_K^{\times})^2 \simeq \F_2[G_K]$ as $\F_2[G_K]$-modules.
\end{lemma}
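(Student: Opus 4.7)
The plan is to decompose $\cO_K^\times/(\cO_K^\times)^2$ into its torsion and free contributions, and to identify the latter as the augmentation ideal of $\F_2[G_K]$ using that $G_K$ acts simply transitively on the real places of $K$.

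First I would invoke Dirichlet's unit theorem: since $K$ is totally real of degree $n$, one has $\cO_K^\times \simeq \{\pm 1\} \times \Z^{n-1}$ as abelian groups, and the torsion subgroup $\{\pm 1\}$ is $G_K$-stable with trivial action. This yields a short exact sequence of $\Z[G_K]$-modules
\[
1 \to \{\pm 1\} \to \cO_K^\times \to L \to 1,
\]
where $L\colonequals \cO_K^\times/\{\pm 1\}$ is a $\Z[G_K]$-lattice of rank $n-1$. Tensoring over $\Z$ with $\F_2$ and using $\mathrm{Tor}_1^{\Z}(L,\F_2)=0$ (since $L$ is $\Z$-free) produces a short exact sequence of $\F_2[G_K]$-modules
\[
0 \to \F_2 \to \cO_K^\times/(\cO_K^\times)^2 \to L/2L \to 0,
\]
where the leftmost $\F_2$ carries the trivial action. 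Since $\#G_K$ is odd, Maschke's theorem gives that $\F_2[G_K]$ is semisimple, so this sequence splits; the remaining task is to identify $L/2L$.

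Next I would compute the rational character of $L$ and transfer it to $\F_2$. Because $K$ is totally real and Galois, the decomposition groups at the infinite places are trivial, so $G_K$ acts simply transitively on the $n$ real places of $K$. The archimedean logarithm embedding then identifies $L\otimes_\Z\R$ with the kernel of the augmentation $\R[G_K]\to\R$, and hence $L\otimes_\Z \Q$ is the augmentation ideal of $\Q[G_K]$. Since $L\otimes_\Z \Z_2$ is a $\Z_2$-free $\Z_2[G_K]$-lift of $L/2L$, the Brauer character of $L/2L$ agrees with the character of $L\otimes_\Z \Q_2$, namely the augmentation character (value $n-1$ at the identity, $-1$ elsewhere). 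Because $\gcd(\#G_K,2)=1$, Brauer characters determine $\F_2[G_K]$-modules up to isomorphism, so $L/2L$ is isomorphic to the augmentation ideal of $\F_2[G_K]$. Combining with the split exact sequence above gives
\[
\cO_K^\times/(\cO_K^\times)^2 \simeq \F_2 \oplus \bigl(\text{augmentation ideal of }\F_2[G_K]\bigr) \simeq \F_2[G_K],
\]
with the last isomorphism coming from the idempotent decomposition $\F_2[G_K] = \F_2\cdot e \oplus \ker(\mathrm{aug})$, where $e = (\#G_K)^{-1}\sum_{g\in G_K} g$ is well defined since $\#G_K$ is invertible in $\F_2$.

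I expect the main obstacle to be the passage from the rational structure of $L$ to its mod-$2$ structure. This step uses the hypothesis that $\#G_K$ is odd in an essential way, both to guarantee semisimplicity of $\F_2[G_K]$ and to ensure that the Brauer decomposition map is an isomorphism so that characters suffice to distinguish isomorphism classes. Once this character comparison is in hand the rest is formal, but if one prefers to avoid Brauer characters, one can instead observe that $\Z_2[G_K]$ is a maximal order (again because $2\nmid\#G_K$), so $L\otimes_\Z\Z_2$ is a projective $\Z_2[G_K]$-lattice determined up to isomorphism by its generic fibre, and reduce modulo $2$ directly.
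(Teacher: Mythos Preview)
Your proof is correct and follows essentially the same approach as the paper: both start from Dirichlet's unit theorem to identify $\cO_K^\times/\{\pm 1\}$ rationally (or over $\R$) with the augmentation ideal, and then descend to $\F_2$ using that $2 \nmid \#G_K$. The paper phrases the descent as ``counting idempotents'' to pass from $\R[G_K]$ to $\Z_{(2)}[G_K]$ and then reduce mod $2$, whereas you make the same passage via Brauer characters (or, in your alternative, via $\Z_2[G_K]$ being a maximal order); these are equivalent packagings of the fact that the decomposition map is an isomorphism when the characteristic is coprime to the group order.
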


\begin{proof} 
We consider $\calO_K^\times$ as a $\Z[G_K]$-module multiplicatively.
By Dirichlet's unit theorem,  
\[ (\calO_K^\times/\{\pm 1\} \otimes_\Z \R) \oplus \R \simeq \R[G_K] \]
as $\R[G_K]$-modules where $\R$ has trivial $G_K$ action (corresponding to the trace zero hyperplane in the Minkowski embedding).  Counting idempotents, we conclude that
		\begin{equation} \label{eqn:OKtimes2}
		    (\calO_K^\times/\{\pm 1\} \otimes_\Z \Z_{(2)}) \oplus \Z_{(2)} \simeq \bZ_{(2)}[G_K] 
		\end{equation}
		as $\Z_{(2)}$-modules; tensoring \eqref{eqn:OKtimes2} with $\Z/2\Z$ and using that $\{\pm 1\}$ has trivial action gives
\[ \calO_K^{\times}/(\calO_K^{\times})^2 \simeq \calO_K^{\times}/\{\pm 1\}(\calO_K^{\times})^2 \times \{\pm 1\} \simeq \F_2[G_K]. \qedhere \]
\end{proof}

\begin{corollary} \label{cor:sel2ck2}
We have $\Sel_2(K) \simeq \F_2[G_K] \oplus \Cl(K)[2]$ as $\F_2[G_K]$-modules.
\end{corollary}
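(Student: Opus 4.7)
The plan is to combine the short exact sequence \eqref{eq:2ses} with Lemma \ref{lem:units} and invoke semisimplicity of the group algebra $\F_2[G_K]$.

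First, I would observe that the map $\Sel_2(K) \to \Cl(K)[2]$ from \eqref{eq:2ses} is $G_K$-equivariant: if $[\alpha] \in \Sel_2(K)$ with $(\alpha) = \frakaI^2$, then for $\sigma \in G_K$ we have $(\sigma(\alpha)) = \sigma(\frakaI)^2$, so $[\sigma(\alpha)]$ maps to $[\sigma(\frakaI)] = \sigma[\frakaI]$. Similarly, the inclusion $\cO_K^\times/(\cO_K^\times)^2 \hookrightarrow \Sel_2(K)$ is clearly $G_K$-equivariant. Thus \eqref{eq:2ses} is a short exact sequence of $\F_2[G_K]$-modules, and by Lemma \ref{lem:units} it reads
\[ 1 \to \F_2[G_K] \to \Sel_2(K) \to \Cl(K)[2] \to 1. \]

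Second, since $\#G_K = n$ is odd, Maschke's theorem applies: the group algebra $\F_2[G_K]$ is semisimple, so every short exact sequence of $\F_2[G_K]$-modules splits. Applying this to the sequence above yields the claimed isomorphism
\[ \Sel_2(K) \simeq \F_2[G_K] \oplus \Cl(K)[2] \]
of $\F_2[G_K]$-modules.

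There is essentially no obstacle here: the argument is a formal consequence of Lemma \ref{lem:units} together with semisimplicity. The only point worth flagging is the Galois-equivariance of the maps in \eqref{eq:2ses}, which is immediate from the definitions. Note that the isomorphism produced is non-canonical, as is typical when invoking semisimplicity to split an extension.
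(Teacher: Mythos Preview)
Your proof is correct and follows essentially the same approach as the paper: invoke semisimplicity of $\F_2[G_K]$ to split the short exact sequence \eqref{eq:2ses}, then apply Lemma \ref{lem:units}. Your additional verification of $G_K$-equivariance of the maps in \eqref{eq:2ses} is a helpful detail that the paper leaves implicit.
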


\begin{proof}
Since $\F_2[G_K]$ is semisimple, the short exact sequence \eqref{eq:2ses} splits as $\F_2[G_K]$-modules; the result then follows from Lemma \ref{lem:units}.
\end{proof}

\begin{lemma}\label{lem:cl2}
For any odd Galois number field $K$, the $G_K$-invariant subspace of each of the $\F_2[G_K]$-modules $\Cl(K)[2]$, $\Cl^+(K)[2]$ and $\ClfourK[2]$ is trivial, whereas the $G_K$-invariant subspace of $\Cl(K)_{4}^+(K)$ is isomorphic to $\F_2$.
\end{lemma}

\begin{proof}
Let $C(K)$ denote one of the groups under consideration, and let $C(\Q)$ denote the ray class group of the same modulus but over $\Q$.  The norm map induces a group homomorphism $C(K)[2] \to C(\Q)[2]$, and on $G_K$-invariants it is an isomorphism, with inverse extension of ideals, since $n$ is odd.  Indeed, if $[\fraka] \in C(K)[2]$ is $G_K$-invariant, then $[\Nm(\fraka)]=[\fraka]^n=[\fraka] \in C(K)[2]$; similarly, if $[(a)] \in C(\Q)[2]$ then $[\Nm(a\Z_K)]=[(a)]^n=[(a)] \in C(\Q)[2]$.  Since the groups $\Cl(\Q),\Cl^+(\Q),\Cl_4(\Q)$ are trivial and $\Cl_4^+(\Q) \simeq \Z/2\Z$, the result follows. 
\end{proof}

\subsection{First reflection principle} \label{sec:reflGK}

In this section, we show that the Galois module structure of the $2$-Selmer group and Kummer duality imply rank inequalities on the class group, classically known as a \emph{reflection theorem}.  Let $W$ be an irreducible (left) $\F_2[G_K]$-module, and for a finitely generated $\Z[G_K]$-module $M$, let $\rk_W(M) \in \Z_{\geq 0}$ be the multiplicity of $W$ in a decomposition of $M/2M$ into irreducible $\F_2[G_K]$-modules.  We recall Lemma \ref{lem:genfact}, which gives an isomorphism $M/2M \simeq M[2]$ for a torsion, finitely generated $\Z(2)[GK]$-module $M$, in particular giving $\rk_W(M)=\rk_W(M[2])$.  

As mentioned in the introduction, our reflection theorems (Proposition \ref{prop:multNGKFS}, Proposition \ref{prop:F2GK}, and Theorem \ref{thm:RDRP}) are special cases of the very general $T$-$S$-reflection theorem of Gras \cite[Th\'eor\`eme 5.18]{Gras}.  Our goal in the next few sections is to give a direct proof of these results: it shows that they can be read off from the $2$-Selmer group, i.e., that they are intrinsic to the underlying structure of the image of the $2$-Selmer group, as we will see below.

We use the notation
\begin{equation}  \label{eqn:rhowsK}
\begin{aligned}
\rho_W(K) &\colonequals \rk_W \Cl(K) \\
\rho_W^+(K) &\colonequals \rk_W \Cl^+(K) \\
\rho_{4,W}(K) &\colonequals \rk_W \Cl_4(K).
\end{aligned}
\end{equation}

\begin{proposition} \label{prop:multNGKFS}
Let $K$ be a Galois number field of odd degree, and let $W$ be an irreducible $\F_2[G_K]$-module.   Then
\begin{equation} \label{eqn:rhowk}
\rho_{W}(K) - \rho_{W\spcheck}(K) = \rk_W \F_2[G_K] - \rk_{W\spcheck} S(K)
\end{equation}
and
\begin{equation} \label{eqn:rkcldk}
\abs{\rho_W(K) - \rho_{W\spcheck}(K)} \leq \rk_W \F_2[G_K].
\end{equation}
\end{proposition}

\begin{proof}
Since the short exact sequence in \eqref{eq:2ses} splits as a sequence of $\F_2[G_K]$-modules, decomposing $\Sel_2(K)$ under $\varphi_K$ gives
\begin{equation}\label{eqs::decompositions}
\cO_K^\times/(\cO_K^\times)^2  \oplus \Cl(K)[2] \simeq \Sel_2(K) \simeq S(K) \oplus \ker(\varphi_K)
\end{equation}
as $\F_2[G_K]$-modules.  By Lemma \ref{lem:units}, we have $\cO_K^\times/(\cO_K^\times)^2 \simeq \F_2[G_K]$.  By Lemma \ref{lem::AllKummerPairings}, we have $\Cl(K)[2] \simeq \ker(\varphi_K)\spcheck$, so
$\rho_W(K) = \rk_{W\spcheck} \ker(\varphi_K)$.  Plugging these in and taking $\rk_{W\spcheck}$ in \eqref{eqs::decompositions} yields
\begin{equation}  \label{eqn:mWwWK}
 \rk_W \F_2[G_K] + \rho_{W\spcheck}(K) = \rk_{W\spcheck} S(K)  + \rho_{W}(K)
\end{equation}
so 
\[ \rho_{W}(K) - \rho_{W\spcheck}(K) = \rk_W \F_2[G_K] - \rk_{W\spcheck} S(K) \leq \rk_W \F_2[G_K] \] 
giving \eqref{eqn:rhowk}.  Repeating the argument with $W\spcheck$, noting $\rk_W \F_2[G_K]=\rk_{W\spcheck} \F_2[G_K]$, and negating then gives \eqref{eqn:rkcldk}.
\end{proof}

In particular, we see from the proof of Proposition \ref{prop:multNGKFS} that the inequality is refined by the equality \ref{eqn:rhowk}, with the discrepancy in the inequality being measured by the group $S(K)$.  This is the simplest instance of the motivation of our paper: we seek to understand structural properties of the $2$-Selmer signature map, from which reflection principles are corollaries.

\subsection{Isotropy ranks}\label{subsec:S(K)}

Similar inequalities govern the narrow class group and its relationship to the class group, encoded in the $2$-Selmer group.  To measure these contributions, we make the following definitions.  Throughout, let $W$ be an irreducible $\F_2[G_K]$-module.  

\begin{definition}\label{def:chirank}
The \defi{archimedean $W$-isotropy rank} of $K$ is
\[ k_W^+(K) \colonequals \rk_W \Cl^+(K) - \rk_W \Cl(K) \]
and the \defi{$2$-adic $W$-isotropy rank} of $K$ is
$$k_{4,W}(K) \colonequals \rk_W\Cl_4(K) - \rk_W\Cl(K).$$
\end{definition}

We have $k_W^+(K),k_{4,W}(K) \in \Z_{\geq 0}$, since $\Cl^+(K),\Cl_4(K)$ surject onto $\Cl(K)$.

\begin{proposition}  \label{prop:kchi4SK}
We have
\begin{equation*}
\begin{aligned}
\Cl^+(K)[2] &\simeq \Cl(K)[2] \oplus (S(K) \cap V_\infty(K))\spcheck, \text{ and } \\
\Cl_4(K)[2] &\simeq \Cl(K)[2] \oplus (S(K) \cap V_2(K))\spcheck.
\end{aligned}
\end{equation*}
In particular,
\begin{equation*}
\begin{aligned}
k_W^+(K) &= \rk_{W\spcheck}(S(K) \cap V_\infty(K)), \text{ and } \\
k_{4,W}(K) &=\rk_{W\spcheck}(S(K) \cap V_2(K)). 
\end{aligned}
\end{equation*}
\end{proposition}

\begin{proof}
We have that $S(K) \cap V_\infty(K) \simeq \ker(\varphi_{K,2})/\ker(\varphi_K)$; since $\ker(\varphi_{K,2})$ and $\ker(\varphi_K)$ are \emph{Kummer dual} to $\Cl^+(K)[2]$ and $\Cl(K)[2]$ by Lemma \ref{lem::AllKummerPairings}, the first isomorphism follows; taking $W$-rank and subtracting gives
\[\rk_{W\spcheck}(S(K) \cap V_\infty(K)) = \rk_{W} \Cl^+(K) - \rk_{W} \Cl(K). \]
The second isomorphism and equality follow similarly.
\end{proof} 

A further duality is reflected in the totally positive elements in the $2$-Selmer group, as follows.

\begin{theorem} \label{thm:RDRP} 
Let $K$ be a Galois number field of odd degree.  Then
\[ \Cl^+(K)[2] \simeq \ClfourK[2]\spcheck \]
as $\F_2[G_K]$-modules.
\end{theorem}

\begin{proof}
Let $\Sel_2^+(K) \colonequals \ker(\varphi_{K,\infty})$ be the classes in the $2$-Selmer group represented by a totally positive element; then $\Sel_2^+(K) \simeq \ClfourK[2]\spcheck$ by Lemma \ref{lem::AllKummerPairings}; we show $\Sel_2^+(K) \simeq \Cl^+(K)[2]$ as $\F_2[G_K$]-modules.

Our proof considers the analogue for $\Sel_2^+(K)$ of the exact sequence \eqref{eq:2ses}.  Let $P_K$ be the group of principal fractional ideals of $K$, and let $P_{K,>0}$ be the subgroup of $P_K$ consisting of principal fractional ideals generated by a totally positive element.  The map $K^\times \to P_K$ sending $\alpha \mapsto (\alpha)$ is surjective and $G_K$-equivariant with kernel $\calO_K^\times$; it induces the exact sequence
\begin{equation*} 
1 \to \calO_K^\times/\calO_{K,>0}^{\times} \to K^\times/K_{>0}^{\times} \to P_K/P_{K,>0} \to 1.
\end{equation*}
By weak approximation, the natural map $K^\times/K_{>0}^\times \to V_\infty(K)$ is a $G_K$-equivariant isomorphism, and so $K^\times/K_{>0}^{\times} \simeq \F_2[G_K]$ by Proposition \ref{prop:GstructV2}(a).  Therefore we obtain a  isomorphism of $\F_2[G_K]$-modules
\begin{equation} \label{eqn:F2G}
\F_2[G_K]   \simeq (\cO_{K}^\times/ \cO_{K,>0}^{\times} ) \oplus (P_K/P_{K,>0} ).
\end{equation}

The natural $G_K$-equivariant map $P_K \to \Cl^+(K)$ defined by $(\alpha) \mapsto [(\alpha)]$ has kernel $P_{K,>0}$ 
and so we have a canonical injection $P_K/P_{K,>0} \hookrightarrow \Cl^+(K)$. Since $P_K^2$ is a subgroup of $P_{K,>0}$ the image of the injection is contained in $\Cl^+(K)[2]$.  Therefore, the map 
\begin{equation*}
\begin{aligned}
\Sel_2^+(K) &\to \Cl^+(K)[2]/(P_K/P_{K,>0})
\end{aligned}
\end{equation*}
mapping the class of $\alpha \in K^\times$ to the class of the fractional ideal $\frakaI$ such that $\frakaI^2=(\alpha)$ is well-defined; it is also visibly surjective, and so fits into the short exact sequence
\begin{equation*}\label{eq:2ses0} 1 \rightarrow \cO_{K,>0}^\times/\cO_{K}^{\times 2} \rightarrow \Sel_2^+(K) \rightarrow \Cl^+(K)[2]/(P_K/P_{K,>0} )\rightarrow 1
    \end{equation*}
    of $\F_2[G_K]$-modules, giving the isomorphism
    \begin{equation} \label{eqn:PKClk2} 
    P_K/P_{K,>0} \oplus \Sel_2^+(K) \simeq \calO_{K,>0}^{\times}/\calO_{K}^{\times 2} \oplus \Cl^+(K)[2]. 
    \end{equation}
    Adding $\calO_{K}^{\times}/\calO_{K,>0}^{\times}$ to both sides of \eqref{eqn:PKClk2}, and using \eqref{eqn:F2G} and Lemma \ref{lem:units} we conclude 
    \[ \F_2[G_K] \oplus \Sel_2^+(K) \simeq \F_2[G_K] \oplus \Cl^+(K)[2] \] 
    and cancelling gives the result.  
\end{proof}

By semisimplicity, we can decompose
\[ S_K \simeq (S_K \cap V_\infty) \oplus (S_K \cap V_2) \oplus S_K' \]
as $\F_2[G_K]$-modules for some choice $S_K' \subseteq S_K$, well-defined up to isomorphism.  We call $S_K'$ a \defi{coordinate complement} to $S_K$ in $V$.  With this notation, we immediately turn to our next reflection principle: again, all we use is $\F_2[G_K]$-module structure and Kummer duality.

\begin{proposition} \label{prop:F2GK}
Let $W$ be an irreducible $\F_2[G_K]$-module, and let $S_K'$ be a coordinate complement to $S_K$ in $V$. 
Then
\begin{equation} \label{eqn:whorplus}
\rho_W^+(K) - \rho_{W\spcheck}^+(K) = \rk_W \F_2[G_K] - \rk_W(S \cap V_\infty) - \rk_{W\spcheck}(S \cap V_2) - \rk_{W\spcheck} S'_K
\end{equation}
and
\begin{equation} \label{eqn:rkcldk_plus}
\begin{aligned}
\abs{\rho_W^+(K) - \rho_{W\spcheck}^+(K)}=\abs{\rho_{4,W}(K) - \rho_{4,W\spcheck}(K)} &\leq \rk_W \F_2[G_K].
\end{aligned}
\end{equation}
\end{proposition}

\begin{proof}
As $K$ is fixed, we drop it from the notation.  By Proposition \ref{prop:kchi4SK}, we obtain
\begin{equation}
\rk_{W\spcheck} S = k_W^+ + k_{4,W} + \rk_{W\spcheck} S'.
\end{equation}
From \eqref{eqn:mWwWK} we get 
\[ \rk_{W\spcheck} S = \rk_W \F_2[G_K] + \rho_{W\spcheck} - \rho_W, \]
so plugging and rearranging gives
\begin{equation}
\begin{aligned}
\rho_W^+ = k_W^+ + \rho_W &= \rk_W \F_2[G_K] + \rho_{W\spcheck} - k_{4,W} - \rk_{W\spcheck} S' \\
\rho_W^+ - \rho_{W\spcheck}^+ &= \rk_W \F_2[G_K] - k_{W\spcheck}^+ - k_{4,W} - \rk_{W\spcheck} S' \leq \rk_W \F_2[G_K].
\end{aligned}
\end{equation}
Repeating with $W$ replaced by $W\spcheck$ gives the inequality $\abs{\rho_W^+(K) - \rho_{W\spcheck}^+(K)} \leq \rk_W \F_2[G_K]$ in \eqref{eqn:rkcldk_plus}.  By Theorem \ref{thm:RDRP}, we have $\rho_W^+(K)=\rho_{4,W\spcheck}(K)$ and $\rho_{W\spcheck}^+(K)=\rho_{4,W}(K)$, which gives the equality in \eqref{eqn:rkcldk_plus} and finishes the proof.
\end{proof}

\begin{proposition} \label{prop:kwKW4}
Let $W$ be an irreducible $\F_2[G_K]$-module, and let $S_K'$ be a coordinate complement to $S_K$ in $V$. 
Then
\begin{equation} \label{eqn:kwkwps}
k_W^+(K) + k_{W\spcheck}^+(K) = \rk_W \F_2[G_K] - \rk_{W\spcheck} S'_K.
\end{equation}
Moreover, $S'_K$ is self-dual and 
\begin{equation} \label{eqn:k0d0d}
0 \leq k_W^+(K) + k_{W\spcheck}^+(K) = k_{4,W}(K) + k_{4,W\spcheck}(K) \leq \rk_W \F_2[G_K]. 
\end{equation}
\end{proposition}

\begin{proof}
We again drop $K$ from the notation.  For the equality in \eqref{eqn:k0d0d}, by Theorem \ref{thm:RDRP}, we have 
\[ \rho_W^+ + \rho_{W\spcheck}^+ = \rho_{4,W} + \rho_{4,W\spcheck}; \]
subtracting $\rho_W + \rho_{W\spcheck}$ from both sides gives the result.  From \eqref{eqn:whorplus} we have
\[ k_W^+ + k_{4,W} + \rho_W - \rho_{W\spcheck} = \rk_W \F_2[G_K] - \rk_{W\spcheck} S' \]
But $k_{4,W} + \rho_W = \rho_{4,W} = \rho_{W\spcheck}^+$ by Theorem \ref{thm:RDRP}, so
\[ k_W^+ + k_{W\spcheck}^+ = \rk_W \F_2[G_K] - \rk_{W\spcheck} S' \leq \rk_W \F_2[G_K] \]
giving \eqref{eqn:kwkwps}.  To restore symmetry, we repeat the same argument with $W\spcheck$ and conclude that $\rk_{W} S' = \rk_{W\spcheck} S'$, so in fact $S'$ is self-dual.
\end{proof}

Just as in Proposition \ref{prop:multNGKFS}, we see from the proof of Proposition \eqref{prop:kwKW4} that the real content lies in the equality \eqref{eqn:kwkwps}, i.e., the discrepancy in the upper bound \eqref{eqn:k0d0d} is measured by the (noncanonically defined) ``diagonal subspace'' $S'(K) \subseteq S(K)$.

We deduce corollaries of these statements in the abelian case in section \ref{sec:conseq}.

\section{Galois module structures for odd degree abelian extensions} \label{sec:Galois}

In this section, we specialize further and suppose that the odd order group $G$ is \emph{abelian} and prove the main structural results of the paper, first for class groups and then for unit signatures. 

\subsection{Duality in the abelian case}

We begin by revising notation and duality in the abelian setting.  Let $\Ftwobar$ be a (fixed) algebraic closure of $\F_2$.  An \defi{$\Ftwobar$-character} of $G$ is a group homomorphism $\chi \colon G \to \Ftwobar^\times$.  For an $\Ftwobar$-character $\chi$, let $\F_2(\chi) \subseteq \Ftwobar$ be the subfield generated by the values of $\chi$.  Then $\F_2(\chi)$ is a finite extension of $\F_2$: more precisely, if $\chi$ has (odd) order $d$ and $2$ has order $f$ in $(\bZ/d\bZ)^\times$, then $\F_2(\chi)\simeq\F_{2^f}$ as $\F_2$-vector spaces.  
The group $G$ acts naturally on $\F_2(\chi)$ via multiplication by $\chi(\sigma)$; thus $\F_2(\chi)$ is a cyclic, irreducible $\F_2[G]$-module, generated by $1$.  Conversely, choosing a cyclic generator, every irreducible $\F_2[G]$-module is of the form $\F_2(\chi)$ for some $\Ftwobar$-character $\chi$.  

By character theory, two such modules $\F_2(\chi)$ and $\F_2(\chi')$ are isomorphic if and only if there exists $\psi \in \Gal(\Ftwobar\,|\,\F_2)$ such that $\chi'=\psi \circ\chi$.    
In particular, since $\psi$ is a power of the Frobenius automorphism, $\F_2(\chi) \simeq \F_2(\chi')$ if and only if $\chi'=\chi^{2^k}$ for some $k \in \Z$.  Moreover, $\Aut_{\F_2[G]}(\F_2(\chi)) \simeq \F_2(\chi)^\times$.  

There is also a simple way to understand duality when $G$ is abelian.  Let $V$ be a finitely generated $\F_2[G]$-module.  The map $\sigma \mapsto \sigma^{-1}$ for $\sigma \in G$ extends by $\F_2$-linearity to an involution $\phantom{i}\spdual \colon \F_2[G] \to \F_2[G]$, which is a ring automorphism when $G$ is abelian. We define $V\spdual$ (the contragredient representation) to be the $\F_2[G]$-module with the same underlying $\F_2$-vector space $V$ but with the action of $\F_2[G]$ under pullback from the involution map.  Explicitly, if $\gamma \in \F_2[G]$ and $x\spdual \in V\spdual$ denotes the same element $x \in V$ then $\gamma(x\spdual) \colonequals (\gamma\spdual(x))\spdual;$
in particular,  for $\sigma \in G$, then $\sigma(x\spdual) = \sigma\spdual(x)\spdual = \sigma^{-1}(x)\spdual$.  We conclude that $$\F_2(\chi)\spdual \simeq \F_2(\chi^{-1}) $$ 
as $\F_2[G]$-modules, which explains the notation $\chidual = \chi^{-1}$ from the introduction.  

\begin{remark}
Without the hypothesis that $G$ is abelian, starting with a left $\F_2[G]$-module $V$, we would obtain a \emph{right} $\F_2[G]$-module $V\spdual$.
\end{remark}

\begin{lemma} \label{lem:homisom}
There is a (non-canonical) $\F_2[G]$-module isomorphism $V\spdual \xrightarrow{\sim} V\spcheck$.
\end{lemma}

\begin{proof} 
Decomposing into irreducibles up to isomorphism, we may suppose without loss of generality that $V=\F_2(\chi)$.  We will compose the $\F_2$-linear maps $\spdual \colon V\spdual \to V$ with the natural map $V \to V\spcheck$ by trace to obtain an $\F_2[G]$-linear map as follows.  Consider the map
\begin{equation*}
\begin{aligned}
V\spdual \ &\to  \ V\spcheck \\
x\spdual \  &\mapsto  (y \mapsto \Tr(x\cdot y))
\end{aligned}
\end{equation*}
where $\Tr\colon \F_2(\chi) \to \F_2$ is the trace map.  This map is nonzero and $\F_2$-linear, and it is also $G$-equivariant because for any $y \in V$,
\begin{equation}
\begin{aligned}
\Tr( (\sigma(x^*))^* \cdot y) &= \Tr(\sigma^{-1} (x)\cdot y)=\Tr(\chi(\sigma^{-1}) x \cdot y)=\Tr(x \cdot \chi(\sigma^{-1})y) \\
&=\Tr(x\cdot \sigma^{-1}(y))=(\sigma \Tr)(x\cdot y) 
\end{aligned}
\end{equation}
for all $\sigma \in G$, $x^* \in V^*$, $y \in V$; hence, it is an isomorphism by Schur's lemma.
\end{proof}

\begin{remark}
See also Theorem \ref{Thm:OrthoDecomp} below, where we revisit the trace pairing on $\F_2[G]$ with its involution $\spdual$.
\end{remark}

\begin{lemma}\label{lem:self-duality}
Let $m \in \Z_{\geq 1}$ denote the (odd) exponent of the abelian group $G$. Every irreducible $\F_2[G]$-module is self-dual if and only if there exists $t \in \Z$ such that $2^t \equiv -1 \pmod{m}$, where $m$ is the exponent of $G$.
\end{lemma}

\begin{proof}
Let $\F_2(\chi)$ be an irreducible $\F_2[G]$-module, and let $d$ be the order of $\chi$.  We have $\F_2(\chi) \simeq \F_2(\chi)^*=\F_2(\chidual)$ if and only if $\chidual=\chi^{2^k}$ for some $k \in \Z$, i.e., $2^k \equiv -1 \pmod{d}$.  Choosing a character with order $d=m$ then gives the result.
\end{proof}

\begin{example}
The smallest (odd) values of $m \in \Z_{>0}$ where $-1 \not\in \langle 2 \rangle \leq (\Z/m\Z)^\times$ are $m=7$, $15$, $21$, and $23$.
\end{example}

\begin{example}\label{ex:prime}
Suppose $\#G = \ell$ is prime and let $f$ be the order of 2 in $(\bZ/\ell\bZ)^\times$.  Then there are $\frac{\ell - 1}{f}$ distinct, nontrivial $\F_2[G]$-modules, up to isomorphism. 
They are all isomorphic as $\F_2$-vector spaces to $\F_{2^{f}}$, a generator of $G$ acts by multiplication by a primitive $\ell$th root of unity $\zeta \in \F_{2^f}$, and two such are isomorphic if and only if $\zeta'=\zeta^{2^k}$ for some $k \in \Z$. 
Finally, all such modules are self-dual if and only if $f$ is even.
\end{example}

The Galois module structure has concrete implications for ranks.

\begin{example}
Let $K$ be a cyclic number field of odd prime degree $\ell$, and let $f$ denote the order of $2$ modulo $\ell$. Then taking $G=G_K$, applying the decomposition into irreducibles given in Example \ref{ex:prime} and Lemma \ref{lem:cl2}, we conclude that $f$ divides each of $\rk_2\Cl(K)$, $\rk_2\Cl^+(K)$, and $\rk_2 \ClfourK$, and that $\rk_2 \Cl^+_4 (K) \equiv 1 \pmod f$.
\end{example}

\subsection{Bilinear forms} 

For an $\Ftwobar$-character $\chi$ of $G$, we write $V_\chi$ for the $\F_2(\chi)$-isotypic component of $V$ and $V_{\chi^{\pm}} \colonequals V_\chi + V_{\chi^{-1}} = V_\chi + V_{\chidual}$ for the sum of the $\F_2(\chi)$- and $\F_2(\chidual)$-isotypic components of $V$.  (If $\chi$ is not self-dual, then this sum is direct; if $\chi$ is self-dual, then $V_{\chi^{\pm}}=V_\chi$.)  

\begin{example}\label{ex:mult1}
Since $G$ is abelian, each isomorphism class of irreducible $\F_2[G]$-modules occurs in $\F_2[G]$ with multiplicity $1$.  Therefore
\[ \F_2[G]_{\chi^{\pm}} \simeq
\begin{cases}
\F_2(\chi), & \text{ if $\chi$ is self-dual;} \\
\F_2(\chi) \oplus \F_2(\chidual), & \text{ otherwise.} 
\end{cases} \]
\end{example}

If $V$ is equipped with a symmetric, $G$-invariant, $\F_2$-bilinear form, then $V$ has a canonical orthogonal decomposition \eqref{eqn:candecomp}
\begin{equation} \label{eqn:candecomp0}
V \simeq \bigboxplus_{\chi} V_{\chi^{\pm}}, 
\end{equation}
where the orthogonal direct sum is indexed by characters $\chi$ taken up to isomorphism and inverses.  Consequently, it is enough to understand bilinear forms on the components $V_{\chi^{\pm}}$.  We use this technique to prove the following fundamental classification result.

\begin{theorem} \label{Thm:OrthoDecomp}
Let $G$ be an abelian group of odd order.  Then there is a unique $G$-invariant, symmetric, nondegenerate, $\F_2$-bilinear form on $\F_2[G]$ up to $G$-equivariant isometry, given by
\begin{equation} \label{eqn:bF2G}
\begin{aligned}
b \colon \F_2[G] \times \F_2[G] & \to \F_2 \\
(x,y) &\mapsto \Tr(x^* y).
\end{aligned}
\end{equation}
In the standard basis $g \in G$ for $\F_2[G]$, the form $b$ is the standard `dot product'.
\end{theorem}

\begin{proof}
Recalling that $\F_2[G]$ is commutative, we see that the trace pairing is symmetric because 
\[ b(y,x)=\Tr(y^* x)=\Tr((y^* x)^*) = \Tr(x^* y) = b(x,y) \]
and $G$-invariant because
\begin{equation*} 
b(\sigma x,\sigma y)=\Tr((\sigma x)^* \sigma y)=\Tr(\sigma^{-1}(x^*)\sigma y) = \Tr(x^* y)=b(x,y). \end{equation*}
Note ${}^*$ is the adjoint with respect to $b$, since 
\begin{equation}  \label{eqn:adjbnu}
b(\nu x,y) = \Tr((\nu x)^* y) = \Tr( \nu^* x^* y)=\Tr(x^* (\nu^* y)) = b(x,\nu^* y). 
\end{equation}
For all $\sigma \in G$, we have $\Tr(\sigma)=1$ if $\sigma=1$, else $\Tr(\sigma)=0$: indeed, in the standard basis, $\sigma$ acts on $\F_2[G]$ by a permutation matrix with no fixed points.  Thus $b(\sigma,\sigma)=\Tr(\sigma^{-1}\sigma)=1$ and $b(\sigma,\tau)=0$ for all $\sigma,\tau\in \F_2[G]$.  Thus in this basis, $b$ is indeed the `dot product', so is (symmetric and) nondegenerate.  

Let $b'$ be another $G$-invariant, symmetric, nondegenerate, $\F_2$-bilinear form on $\F_2[G]$.  We obtain an $\F_2[G]$-module isomorphism $\psi \colon \F_2[G] \xrightarrow{\sim} \F_2[G]\spcheck$ by $\psi(x)(y)=b(x,y)$, and similarly $\psi'$ with $b'$.  Let $\nu = \psi^{-1}(\psi'(1))$.  Then $b'(x,y)=b(\nu x,y)$ for all $x,y \in \F_2[G]$.  

Since $b,b'$ are symmetric, by \eqref{eqn:adjbnu} we have
\[ b(\nu x,y) = b'(x,y)=b'(y,x)=b(\nu y,x)=b(x,\nu y)=b(\nu^* x,y) \]
for all $x,y \in \F_2[G]$.  Taking $x=1$, by nondegeneracy we conclude that $\nu=\nu^*$.  
Since $\F_2[G]$ is a product of fields of characteristic $2$, the group of units $\F_2[G]^\times$ is an abelian group of odd order.  The subgroup of units fixed under the ring automorphism ${}^*$ is again of odd order, so squaring is an automorphism and thus $\nu = \mu^2$ for some $\mu \in \F_2[G]^\times$ with $\mu=\mu^*$.  Thus $\nu=\mu^* \mu$.  Since
\[ b'(x,y)=b(\nu x,y)=b(\mu^* \mu x,y) = b(\mu x, \mu y) \]
and therefore the $\F_2[G]$-module automorphism $x \mapsto \mu x$ defines a $G$-equivariant isometry from $b$ to $b'$.  
\end{proof}

\begin{example} \label{exm:Fq2g}
As this will be central to our investigation, we write down explicitly the pairing in Theorem \ref{Thm:OrthoDecomp} restricted to  orthogonal components as in \eqref{eqn:candecomp}.

If $\chi$ is self-dual, then $\F_2[G]_{\chi^{\pm}} \simeq \F_2(\chi) \simeq \F_{2^f}$ and the bilinear form is non-alternating when $\chi$ is trivial and alternating when $\chi$ is non-trivial. When $\chi$ is trivial then $\F_2(\chi)= \F_2$ and $b(1,1) = 1$ so the form is non-alternating. Now suppose that $\chi$ has order $d > 1$ and let $\zeta \in \F_2(\chi)$ be a primitive $d$th root of unity.  Since $\Tr_{\F_2(\chi)|\F_2}(1)=0$ (as $f$, the order of $2$ in $(\Z/d\Z)^\times$ is even), we have $b(\zeta^{k},\zeta^{k})=\Tr_{\F_2(\chi)|\F_2}(1)=0$ for all $k$, so by linearity we conclude that $b$ is alternating. 

If $\chi$ is not self-dual, then $\F_2[G]_{\chi^{\pm}} \simeq \F_2(\chi) \oplus \F_2(\chidual) \simeq (\F_{2^f})^2$ and the bilinear form is a sum of hyperbolic planes, pairing dual basis elements nontrivially.  Put another way, the canonical pairing \eqref{eqn:natpairing} induces a natural pairing on $\F_2(\chi)\spcheck \oplus \F_2(\chi)$, which can be described explicitly as
\begin{equation*} 
b((f,x),(g,y))=f(y)+g(x).
\end{equation*}
\end{example}

\subsection{Maximal totally isotropic subspaces}

In this section, we will classify the maximal isotropic subspaces of $\F_2[G] \boxplus \F_2[G]$ and study their isometry groups.  

We continue our hypothesis that $G$ is a finite abelian group of odd order.  Let $V$ be a finitely generated $\F_2[G]$-module equipped with a $G$-invariant, symmetric, $\F_2$-bilinear form.  We let $\Isom_G(V) \leq \Aut_G(V)$ be the group of $G$-equivariant isometries of $V$, i.e., the subset of $\F_2[G]$-module automorphisms of $V$ which preserve the bilinear form. 

\begin{lemma} \label{lem::isometries} 
Equip $\F_2[G]$ with the trace bilinear form $b$ \eqref{eqn:bF2G}. Let $\chi$ be an $\Ftwobar$-character of $G$ and let $q = \#\F_2(\chi)$. Then 
\[ \# \Isom_G (\F_2[G]_{\chi^\pm}) = 
 \begin{cases}  
 \sqrt{q} +1,   &  \text{  if $\chi$ is self-dual;} \\ 
  q-1,  &  \text{ otherwise.}  
  \end{cases} \]
\end{lemma}

\begin{proof}
First, suppose $\chi$ is self-dual, so $\F_2[G]_{\chi^\pm} \simeq \F_2(\chi)$.  As in the proof of Theorem \ref{Thm:OrthoDecomp}, the group $\Aut_G(V)$ of $\F_2[G]$-module automorphisms of $\F_2(\chi)$ are given by multiplication by an element $\nu \in \F_2(\chi)^\times$.  The subgroup $\Isom_G(V) \leq \Aut_G(V)$ of isometries are those for which 
\begin{equation}  \label{eqn:bxynu}
b(x, y) = b(\nu x, \nu y) = b( x, \nu \nu^* y)
\end{equation}
for all $x,y \in \F_2(\chi)$; since $b$ is nondegenerate, this is equivalent to $\nu\nu^*=1$.  The map $\nu \mapsto \nu\nu^*$ is the norm to the unique subfield of $\F_2(\chi)$ of index $2$; since the norm is surjective, we conclude that $\Isom_G(\F_2(\chi))$ is a cyclic group of cardinality $(q-1)/(\sqrt{q}-1)= \sqrt{q}+1$.  

Second, suppose $\chi$ is not self-dual, and write $V \colonequals \F_2[G]_{\chi^\pm} = \F_2(\chi) \oplus \F_2(\chi^*)$.  Then ${}^*$ acts on $V$ by $(x,y)^* = (y^*,x^*)$.  The group $\Aut_G(V)$ is given by coordinate-wise multiplication by $(\mu,\nu) \in \F_2(\chi)^\times \times \F_2(\chi^*)^\times$.  Such an automorphism is an isometry if and only if
\begin{equation} \label{eqn:x1x2y1y2numu}
\begin{aligned}
b(( x_1, x_2),( y_1,  y_2))  &=  b((\nu x_1, \mu x_2),( \nu y_1, \mu  y_2)) =  b((x_1, x_2),( \nu \mu^* y_1, \mu \nu^* y_2)) \\
& =  b((x_1, x_2),( \nu \mu^* y_1, (\nu \mu^*)^* y_2))  
\end{aligned}
\end{equation}
for all $(x_1,x_2), (y_1,y_2) \in \F_2(\chi) \times \F_2(\chi^*)$. The same nondegeneracy argument in the previous paragraph shows this is equivalent to $\nu \mu^*  = 1 \in \F_2(\chi)^\times$ (equivalently,  $\mu\nu^* = 1 \in \F_2(\chi^*)^\times$).  We conclude that $\Isom_G(V) \leq \Aut_G(V)$ consists of the elements $(\nu, (\nu^{-1})^*)$ with $\nu \in \F_2(\chi)^\times$, a cyclic group of cardinality $q-1$.
\end{proof}

\begin{lemma} \label{lem::isom=maxtotiso} 
Suppose the bilinear form on $V$ is nondegenerate.  Then all $G$-invariant maximal totally isotropic subspaces $S \subseteq V \boxplus V$ such that $S \cap (V \boxplus \{0\}) = S \cap (\{0\} \boxplus V)=\{0\}$ are in the same $G$-equivariant isometry class, and there are exactly $\#\Isom_G(V)$ of them.  
\end{lemma}

\begin{proof} 
See Dummit--Voight \cite[Lemma A.9]{DV} for a proof in the case of $\F_2$-vector spaces; the method of proof gives the same result for $\F_2[G]$-modules. For example, the isometry $\tau$ in \cite[Lemma A.9]{DV}  is automatically a $G$-equivariant isomorphism when $S$ is $G$-invariant: the element $g \circ (v + \tau(v)) = g \circ v + g \circ (\tau(v))$ is again an element of $S$, so $\tau(g \circ v) = g \circ (\tau(v))$
\end{proof}

In our main classification in section \ref{sec:conseq}, we will need to classify Galois invariant maximal totally subspaces in the setting of the following theorem.

\begin{theorem} \label{thm:maxisoselfdual} 
Let $G$ be an abelian group of odd order, and let $\chi$ be an $\Ftwobar$-character of $G$ and let $q \colonequals \#\F_2(\chi)$.  Let $V \colonequals V_1 \boxplus V_2$ with each $V_i \colonequals \F_2[G]_{\chi^\pm}$ equipped with the restriction of the bilinear form \eqref{eqn:bF2G} for $i=1,2$.  

Then the possible $G$-invariant, maximal totally isotropic subspaces $S \subseteq V$ are described in Tables \textup{\ref{table:cases0}}, each row representing a different $G$-equivariant isometry class.  
\end{theorem}


\begin{proof}   
First, suppose $\chi$ is self-dual.  Then up to $G$-equivariant isometry, we have $V_i \simeq \F_2(\chi)$ for $i=1,2$ with the (restriction of the) trace bilinear form \eqref{eqn:bF2G} (see also Example \ref{exm:Fq2g}).  A subspace is $G$-invariant if and only if it is an $\F_2(\chi)$-subspace; so by dimensions, a maximal isotropic subspace $S$ is generated by the $\F_2(\chi)$-span of a single vector.  We cannot have $S=V_{1}$ or $S=V_{2}$, since each $b_{i}$ is nondegenerate.  We finish with Lemma \ref{lem::isometries} and Lemma \ref{lem::isom=maxtotiso}.  Alternatively, each subspace is spanned by a unique vector $(1,\nu)$ with $\nu \in \F_2(\chi)^\times$, and one can verify directly that the $\F_2(\chi)$-span is totally isotropic if and only if $\nu\nu^*=1$, consistent with the calculation in \eqref{eqn:bxynu}.  This covers case \textsf{A}.
  
So now suppose $\chi$ is not self-dual.  Now $V_i \simeq \F_2(\chi) \oplus \F_2(\chi^*)$ for $i=1,2$, still with the trace bilinear form $b$.  Let $S \subseteq V $ be a $G$-invariant, maximal totally isotropic subspace.  Since $S$ has half of the $\F_2$-dimension of the bilinear space, as an $\F_2[G]$-module the possibilities for $S$ are $\F_2(\chi)^2$, $\F_2(\chidual)^2$, or $\F_2(\chi) \oplus \F_2(\chidual)$.  If $S \simeq \F_2(\chi)^2$, then $S =  (V_{1})_\chi \boxplus (V_{2})_{\chi} $ which is indeed totally isotropic since the restriction $b_\chi$ of $b$ to $V_{\chi}$ is identically zero by Lemma \ref{lem:OrthoDecomp1}.  Similarly for $S \simeq \F_2(\chidual)^2$; this handles cases \textsf{B} and \textsf{B}${}^\prime$.  

We now consider the possibilities for $S \simeq \F_2(\chi) \oplus \F_2(\chidual)$.  We have $S = S_\chi \oplus S_{\chi^*}$ where
\begin{equation}
\begin{aligned}
S_\chi & \colonequals \F_2(\chi) (x_1, x_2) \subseteq V_\chi, \\
S_{\chi^*} & \colonequals \F_2(\chi^*)( y_1,y_2) \subseteq V_{\chi^*},
\end{aligned}
\end{equation}
for some $(x_1,x_2),(y_1,y_2)$ where both are nonzero.  Suppose that $x_1 = 0$.  Then 
\begin{equation} 
b((0,x_2), (y_1,y_2)) = b_{1} (0,y_1) + b_{2} (x_2,y_2) = b_{2} (x_2,y_2);
\end{equation}
the nondegeneracy of $b_{2}$ on $V_{2}$ then implies that $y_2 = 0$, and $S = (V_{1})_{\chi^*} \boxplus (V_{2})_{\chi}$.  This $S$ is indeed totally isotropic since the restriction $(b_{i})_{\chi}$ of $b_i$ to $(V_{i})_\chi$ for $i = 1,2$ is identically zero by Lemma \ref{lem:OrthoDecomp1}.  Similar conclusions hold when $x_2=0$, giving cases \textsf{C} and \textsf{C}${}^\prime$.  Finally, if $x_1,x_2,y_1,y_2$ are all nonzero, then $S \cap V_1 = S \cap V_2 = \{0\}$; by Lemma  \ref{lem::isometries} and Lemma \ref{lem::isom=maxtotiso}, the number of subspaces of this form is $q-1$ and each subspace is in the same $G$-equivariant isometry class.  Alternatively, a calculation like \eqref{eqn:x1x2y1y2numu} shows that $S$ is uniquely determined by the spans of $(x_1,x_2)=(1,\nu)$ and $(y_1,y_2)=(1,\mu)$ with $\nu\mu^*=1$, giving indeed $q-1$ possibilities.

 \end{proof}

\subsection{Main result, and consequences} \label{sec:conseq}

It is now a straightforward matter to conclude our main structural result, restated here for convenience.
We recall \eqref{eqn:varphiKdef} the $2$-Selmer signature map $\varphi_K \colon \Sel_2(K) \to V(K) = V_\infty(K) \boxplus V_2(K)$; by Proposition \ref{prop:GstructV2}, we have $V_\infty(K) \simeq V_2(K) \simeq \F_2[G_K]$ as $\F_2[G_K]$-modules, equipped with the orthogonal direct sum of the bilinear forms \eqref{eqn:bF2G}.  The image 
\[ S(K) \colonequals \im(\varphi_K) \simeq \Sel_2(K)/\ker(\varphi_K) \] 
of the $2$-Selmer group under the signature map is a $G_K$-invariant maximal totally isotropic subspace of $V(K)$ by Corollary \ref{cor:Gkinv}.  By the canonical orthogonal decomposition \eqref{eqn:candecomp}, 
\begin{equation}\label{SKchipm}
S(K) = \bigboxplus_{\chi} S(K)_{\chi^{\pm}},
\end{equation} 
thus we conclude that $S(K)_{\chi^{\pm}}$ are maximal totally isotropic $G_K$-invariant subspaces of $V(K)_{\chi^{\pm}}$.

\begin{thm} \label{mainthm::Representationtheory}
Let $K$ be a Galois number field with abelian Galois group $G_K$ of odd order.  Then for each $\Ftwobar$-character $\chi$, there are exactly $6$ possibilities for $S(K)_{\chi^{\pm}} \leq V_\infty(K) \boxplus V_2(K)$ up to $G_K$-equivariant isometry.
\end{thm}

\begin{proof}
In view of the first paragraph, Theorem \ref{thm:maxisoselfdual} applies to classify the possibilities.
\end{proof}

Using the notation \eqref{eqn:notationrhow}, which matches \eqref{eqn:rhowsK} but with characters, we fill in Table \ref{table:cases1} using Table \ref{table:cases0}, Kummer duality \eqref{eqn:Kummer2}, and Proposition \ref{prop:kchi4SK}.

We now see some immediate corollaries.  First, we deduce the cornerstone results of Taylor \cite{Taylor} and Oriat \cite{Oriat2}.

\begin{corollary}[{Taylor \cite[(*)]{Taylor}, Oriat \cite[Th\'eor\`eme 2]{Oriat2}}] \label{cor:Wf2gk} 
For an abelian Galois number field $K$ of odd degree, let $\F_2(\chi)$ be an irreducible $\F_2[G_K]$-module.  Then
we have 
{\setlength{\arraycolsep}{0.3ex}
\begin{eqnarray*} 
0 \leq &k^+_\chi(K) + k^+_{\chidual}(K)& \leq  1,\\
0 \leq &k_{4,\chi}(K) + k_{4,\chidual}(K)& \leq 1.
\end{eqnarray*}}  
\flushleft Moreover, if $\F_2(\chi)$ is self-dual, then $k^+_\chi(K)=k^+_{\chidual}(K) = 0 = k_{4,\chi}(K) = k_{4,\chidual}(K)$.
\end{corollary}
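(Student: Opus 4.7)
The plan is to reduce the corollary to Theorem \ref{prop:maxisoselfdual} componentwise through the canonical orthogonal decomposition \eqref{SKchipm}. First I would translate the isotropy ranks using Lemma \ref{lem:kchi4SK}, which gives
\begin{equation*}
k_\chi^+(K) + k_{\chidual}^+(K) = \rk_{\chidual}\bigl(S(K) \cap V_\infty(K)\bigr) + \rk_\chi\bigl(S(K) \cap V_\infty(K)\bigr),
\end{equation*}
and the analogous identity with $V_2(K)$ in place of $V_\infty(K)$ for the $k_4$-sum. Since \eqref{SKchipm} implies that $S(K) \cap V_\infty(K) = \bigoplus_\chi \bigl(S(K)_{\chi^\pm} \cap V_\infty(K)_{\chi^\pm}\bigr)$ (and likewise for $V_2(K)$), it suffices to bound the intersection in a single $\chi^\pm$-isotypic summand. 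By Corollary \ref{cor:Gkinv} together with \eqref{SKchipm}, each $S(K)_{\chi^\pm}$ is itself a $G_K$-invariant maximal totally isotropic subspace of $V(K)_{\chi^\pm}$, so the possibilities are exhausted by Theorem \ref{prop:maxisoselfdual}.

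For the self-dual case I would invoke Theorem \ref{prop:maxisoselfdual}(a), which directly gives $S(K)_{\chi^\pm} \cap V_\infty(K) = \{0\}$. The same vanishing holds with $V_2(K)$: since $S(K)_{\chi^\pm}$ has $\F_2(\chi)$-dimension $1$, a nontrivial intersection with $V_2(K)_\chi$ would force equality, contradicting the nondegeneracy of $b_{2,\chi}$ on $V_2(K)_\chi$ recorded in Example \ref{exm:Fq2g}. This yields all four vanishings $k_\chi^+ = k_{\chidual}^+ = k_{4,\chi} = k_{4,\chidual} = 0$. In the non-self-dual case I would simply read off the intersections from the five possibilities in Theorem \ref{prop:maxisoselfdual}(b): cases (i), (i${}^\prime$), (ii), (ii${}^\prime$) each contribute $1$ to $\rk_\chi + \rk_{\chidual}$ of $S \cap V_\infty(K)$, since the intersection is a rank-one $\F_2(\chi)$- or $\F_2(\chidual)$-component, while case (iii) yields a trivial intersection and thus contributes $0$. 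The bound for $k_{4,\chi} + k_{4,\chidual}$ then follows either by identical case inspection with $V_2(K)$ or, more efficiently, from Proposition \ref{cor:kplusk4}.

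I do not anticipate any real obstacle: Theorem \ref{prop:maxisoselfdual} has already done the structural work, and what remains is bookkeeping across the five cases. The one step requiring attention is the swap between $\chi$ and $\chidual$ intrinsic to Lemma \ref{lem:kchi4SK}, which is precisely why the natural bound is on the symmetric combination $k_\chi^+(K) + k_{\chidual}^+(K)$ rather than on $k_\chi^+(K)$ alone.
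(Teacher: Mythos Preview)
Your proposal is correct and follows exactly the paper's approach: the paper's proof is the single sentence ``this is immediate from the classification in Theorem \ref{prop:maxisoselfdual},'' and you have simply unpacked that deduction through Lemma \ref{lem:kchi4SK} and a case-by-case reading of the intersections. Your extra argument that $S(K)_{\chi} \cap V_2(K)_\chi = \{0\}$ in the self-dual case (by nondegeneracy of $b_{2,\chi}$, mirroring the $V_\infty$ argument in the proof of Theorem \ref{prop:maxisoselfdual}(a)) is the right way to extract the $k_{4,\chi}$ vanishing, which the paper leaves implicit.
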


\begin{proof}
Immediate from either Proposition \ref{prop:kwKW4}, using that $\rk_\chi \F_2[G]=1$ for all $\chi$, or from Table \ref{table:cases0}.
\end{proof}

\begin{remark}
Indeed, Proposition \ref{prop:kwKW4} gives the equality 
\[ k^+_\chi(K) + k^+_{\chidual}(K) = k_{4,\chi}(K) + k_{4,\chidual}(K); \]
we could not find this explicitly stated by either Taylor \cite{Taylor} or Oriat \cite{Oriat2}.
\end{remark}

Another corollary we obtain is the following result, proven by Oriat \cite{Oriat2} (and a special case of the $T$-$S$-reflection principle of Gras \cite[Th\'eor\`eme 5.18]{Gras}): see also the survey by by Lemmermeyer \cite[Theorem 7.2]{Lemmermeyer}.

\begin{corollary}[{Oriat \cite[Corollaire 2c]{Oriat2}}] \label{cor:Kabclk2}
Let $m \in \bZ_{\geq 1}$ denote the exponent of the Galois group $G_K$ for the abelian number field $K$ of odd degree. If there exists $t \in \Z$ such that $2^t \equiv -1 \pmod{m}$, then $\Cl^+(K)[2] \simeq \Cl_4(K)[2] \simeq \Cl(K)[2]$ as $\F_2[G_K]$-modules. 
\end{corollary}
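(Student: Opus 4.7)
The plan is to reduce the corollary directly to Corollary \ref{cor:Wf2gk} via Lemma \ref{lem:self-duality}.

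First, I would invoke Lemma \ref{lem:self-duality}: the hypothesis that $2^t \equiv -1 \pmod{m}$ for some $t \in \Z$ is precisely the criterion for \emph{every} irreducible $\F_2[G_K]$-module $\F_2(\chi)$ to be self-dual, i.e., $\chi \simeq \chidual$ for all $\Ftwobar$-characters $\chi$ of $G_K$.

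Next, I would apply the self-dual case of Corollary \ref{cor:Wf2gk}: for every $\Ftwobar$-character $\chi$ of $G_K$, we have $k_\chi^+(K) = 0$, that is, $\rk_\chi \Cl^+(K) = \rk_\chi \Cl(K)$ by Definition \ref{def:chirank}.

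Finally, since $\#G_K$ is odd, the group ring $\F_2[G_K]$ is semisimple, so any finitely generated $\F_2[G_K]$-module is determined up to isomorphism by the multiplicities $\rk_\chi$ of each irreducible $\F_2(\chi)$. Because these multiplicities agree for $\Cl^+(K)[2]$ and $\Cl(K)[2]$ across all $\chi$, we conclude $\Cl^+(K)[2] \simeq \Cl(K)[2]$ as $\F_2[G_K]$-modules, as desired.

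There is no real obstacle here: all of the work has been done in Theorem \ref{prop:maxisoselfdual} and its corollary, and the statement is essentially a packaging of the self-dual case of the classification of $G_K$-invariant maximal totally isotropic subspaces. The only thing to verify is that the $\chi$-rank being preserved for each $\chi$ really does imply $\F_2[G_K]$-module isomorphism, which is immediate from the semisimplicity of $\F_2[G_K]$ (valid because $\#G_K$ is odd, so $2 \nmid \#G_K$).
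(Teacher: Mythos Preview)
Your proof is correct and follows exactly the same route as the paper: invoke Lemma \ref{lem:self-duality} to conclude every irreducible is self-dual, then apply the self-dual case of Corollary \ref{cor:Wf2gk} to get $k_\chi^+(K)=0$ for all $\chi$. Your version is in fact slightly more explicit than the paper's, which simply records $k^+(K)=0=k_4(K)$ and leaves the passage from $\chi$-rank equality to $\F_2[G_K]$-module isomorphism (via semisimplicity) unstated.
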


\begin{proof}
By Lemma \ref{lem:self-duality}, every $\F_2[G]$-module is self-dual so the conclusion of Corollary \ref{cor:Wf2gk} implies that $k_\chi^+(K) = k_{4,\chi}(K) = 0$ for all $\chi$ in the notation of Definition \ref{def:chirank}; the result follows.  Alternatively, for the first isomorphism, apply Theorem \ref{thm:RDRP}, given that all modules are self-dual.
\end{proof}

\begin{example} If $\ell$ is an odd prime such that $2$ is a primitive root modulo $\ell$, then any cyclic number field $K$ of degree $\ell$ satisfies $\rk_2 \Cl(K) = \rk_2 \Cl^+(K)$ by Corollary \ref{cor:Kabclk2}.
\end{example}

\begin{example} 
More generally, if $2$ has even order modulo $\ell$, then Corollary \ref{cor:Kabclk2} applies to cyclic number fields of degree $\ell$. The first prime for which $2$ has even order modulo $\ell$ but $2$ is not a primitive root in $(\bZ/\ell\bZ)^\times$ is $\ell = 17$.
\end{example}

\begin{example} Corollary \ref{cor:Kabclk2} also applies to abelian groups that are not cyclic. For instance, if $K$ is a number field with Galois group $G_K \simeq \Z/3\Z \times \Z/3\Z$, then Corollary \ref{cor:Kabclk2} implies that $\rk_2 \Cl(K) = \rk_2 \Cl^+(K)$. \end{example}

\begin{remark} Edgar--Mollin--Peterson \cite[Theorem 2.5]{EMP} reprove Corollary \ref{cor:Kabclk2}, and they additionally make the claim that the corollary holds for all \emph{Galois} extensions (even though they only give a proof for the abelian case). Lemmermeyer \cite[p.~13]{Lemmermeyer} observes that this claim is erroneous.  We give an explicit counterexample (of smallest degree).  Let $K$ be the degree-27 normal closure over $\Q$ of the field $K_0$ of discriminant $3^{16} \cdot 37^{4}$ defined by
 \[ x^9 - 3x^8 - 21x^7 + 63x^6 + 141x^5 - 435x^4 - 273x^3 + 996x^2 - 192x - 64, \]
  which has LMFDB label \href{http://lmfdb.com/NumberField/9.9.80676485676081.1}{\textsf{9.9.80676485676081.1}}.
  This nonabelian extension $K$ has Galois group isomorphic to the Heisenberg group $C_3^2:C_3$ (with label 9T7), which has exponent $m = 3$.  The class group $\Cl(K)$ is trivial and $\Cl^+(K) \simeq (\Z/2\Z)^6$.
\end{remark} 

The next simplest case not treated by Corollary \ref{cor:Wf2gk} is treated by the following corollary.

\begin{corollary} \label{cor:umpmod34}
Suppose $K$ is a cyclic number field of prime degree $\ell \equiv 7\pmod{8}$ such that  $2$ has order $\frac{\ell-1}{2}$ in $(\bZ/\ell\bZ)^\times$.  Then there exist exactly \/$2$ nontrivial irreducible $\F_2[G_K]$-modules $\F_2(\chi) \not\simeq \F_2(\chidual)$.

Moreover, if $\Cl(K)[2]$ is not self-dual, then either $\Cl^+(K)[2] \simeq \F_2(\chi) \oplus \Cl(K)[2]$ or $\Cl^+(K)[2]\simeq\F_2(\chidual)\oplus \Cl(K)[2]$; and the same conclusion holds with $\Cl^+(K)[2]$ replaced by $\Cl_4(K)[2]$.
\end{corollary}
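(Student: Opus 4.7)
The plan is to dispatch the two claims in sequence by specializing the classification theorems already established. For the first claim, I would invoke Example \ref{ex:prime} to count nontrivial irreducible $\F_2[G_K]$-modules: there are exactly $(\ell-1)/f$ of them, and our hypothesis $f = (\ell-1)/2$ gives exactly $2$. To certify that the two are a non-self-dual pair, I would apply Lemma \ref{lem:self-duality} with $m = \ell$: it suffices to verify $-1 \notin \langle 2\rangle \leq (\Z/\ell\Z)^\times$. Since $\langle 2\rangle$ has index $2$ in the cyclic group $(\Z/\ell\Z)^\times$, it coincides with the subgroup of squares, and $-1$ is a square modulo $\ell$ if and only if $\ell \equiv 1 \pmod 4$. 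The hypothesis $\ell \equiv 7 \pmod 8$ forces $\ell \equiv 3 \pmod 4$, so $-1$ is a non-square, hence $-1 \notin \langle 2\rangle$. Thus neither nontrivial module is self-dual, and (having only two of them) they must form the single dual pair $\{\F_2(\chi), \F_2(\chidual)\}$.

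For the second claim, the starting observation is that by Lemma \ref{lem:cl2} the modules $\Cl(K)[2]$, $\Cl^+(K)[2]$, and $\Cl_4(K)[2]$ have trivial $G_K$-fixed subspace, so no copies of the trivial module appear; by the first part, each of them is therefore a direct sum of copies of $\F_2(\chi)$ and $\F_2(\chidual)$ with multiplicities $\rho_\chi,\rho_{\chidual}$ (resp.\ their $+$ and $4$ analogues). The hypothesis that $\Cl(K)[2]$ is \emph{not} self-dual now translates to $\rho_\chi(K) \neq \rho_{\chidual}(K)$, and Theorem \ref{thm:notselfdualgalmod} then forces $\abs{\rho_\chi - \rho_{\chidual}} = 1$, so that
\[ S(K)_{\chi^\pm} \simeq \F_2(\chi)^{\oplus 2} \quad \text{or} \quad S(K)_{\chi^\pm} \simeq \F_2(\chidual)^{\oplus 2}. \]

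To extract the $\F_2[G_K]$-module structure of $\Cl^+(K)[2]$, I would match each option with a case of Theorem \ref{prop:maxisoselfdual}(b): $\F_2(\chi)^{\oplus 2}$ is option (i), so $S(K)_{\chi^\pm} = V_\infty(K)_\chi \oplus V_2(K)_\chi$, and $\F_2(\chidual)^{\oplus 2}$ is option (i$'$). In the first subcase, $S(K) \cap V_\infty(K)$ has $\chi$-component $\F_2(\chi)$ and no $\chidual$-component, so Lemma \ref{lem:kchi4SK} gives $k_\chi^+(K) = 0$ and $k_{\chidual}^+(K) = 1$, yielding $\Cl^+(K)[2] \simeq \F_2(\chidual) \oplus \Cl(K)[2]$; the second subcase is symmetric and yields $\F_2(\chi) \oplus \Cl(K)[2]$. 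Replacing $V_\infty(K)$ by $V_2(K)$ and using the second formula in Lemma \ref{lem:kchi4SK} gives the identical dichotomy for $\Cl_4(K)[2]$.

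The whole argument is a direct specialization, so the only nontrivial obstacle is the bookkeeping step of matching each inequality $\rho_\chi > \rho_{\chidual}$ or $\rho_\chi < \rho_{\chidual}$ to the correct isotropy rank $k_\chi^+, k_{\chidual}^+$; this is essentially unpacking the equivalences of Theorem \ref{thm:notselfdualgalmod} and the case labels of Theorem \ref{prop:maxisoselfdual}(b), together with the duality-swap in Lemma \ref{lem:kchi4SK}. All other steps are immediate consequences of the representation-theoretic setup.
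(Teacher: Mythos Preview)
Your proposal is correct and follows essentially the same approach as the paper: invoke Lemma~\ref{lem:self-duality} and Example~\ref{ex:prime} for the first claim, then translate non-self-duality of $\Cl(K)[2]$ into $\rho_\chi \neq \rho_{\chidual}$ and apply Theorem~\ref{thm:notselfdualgalmod} together with cases (b)(i)--(i${}^\prime$) of Theorem~\ref{prop:maxisoselfdual}. You supply more detail than the paper (the quadratic-reciprocity verification that $-1 \notin \langle 2\rangle$, the explicit use of Lemma~\ref{lem:cl2} and Lemma~\ref{lem:kchi4SK}), but the logical structure is the same.
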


\begin{proof}
By Lemma \ref{lem:self-duality}, the hypotheses on $\ell$ imply that there is an irreducible $\F_2[G_K]$-module that is not self-dual. The first statement then follows from Example \ref{ex:prime}.
In addition, $\Cl(K)[2]$ is not self-dual if and only if $\rho_\chi(K) \neq \rho_{\chidual}(K)$, and so the second statement follows from cases \textsf{B} and \textsf{B}${}^\prime$, respectively, since $\Cl^+(K)[2] \simeq \Cl(K)[2] \oplus (S \cap V_\infty)\spcheck$ by Proposition \ref{prop:kchi4SK}.
\end{proof}

In the case that $\Cl(K)[2]$ is self-dual, there are no restrictions on $\Cl^+(K)[2]$, and we model this situation in Conjecture \ref{conj:3mod4}.

\smallskip

\subsection{Unit signature ranks}

We now deduce some consequences for unit signature ranks.  Recall that the unit signature rank of $K$ is $\sgnrk(\calO_K^\times) = \dim_{\F_2} \sgn_\infty(\calO_K^\times),$
where $\sgn_\infty$ was defined in Definition \ref{def:VooK}. 
There is a natural exact sequence
\begin{equation*}
    1 \to \{\pm 1\}^n/\sgn_\infty(\calO_K^\times) \to \Cl^+(K) \to \Cl(K) \to 1 
\end{equation*} 
tying together the unit signature rank and the isotropy rank. For example, we have
\begin{equation*} \label{eq:tworankandsgnrk}
n-\sgnrk(\calO_K^\times) \leq \rk_2 \Cl^+(K).
\end{equation*} 
In addition, recall from \eqref{eq:sgnrkchi} that $\sgnrk_\chi(\calO_K^\times)$ is equal to the multiplicity of $\F_2(\chi)$ in $\sgn_\infty(\calO_K^\times)$. 
Since $\calO_{K}^\times/(\calO_K^{\times})^2 \simeq \F_2[G_K]$, and $G_K$ is abelian, every irreducible $\F_2[G_K]$-module occurs with multiplicity $1$ inside the unit group and hence 
\begin{equation}\label{eq:sgnrkineq}
 0 \leq \sgnrk_\chi(\calO_K^\times) \leq 1. 
\end{equation}
We improve upon the above inequality in the following main result.

\begin{theorem} \label{thm:oddm10}
Let $K$ be an abelian number field of odd degree with Galois group $G_K$.  Let $\chi$ be an $\Ftwobar$-character of $G_K$.  Then the following statements hold.
\begin{enumalph}
\item If $k_\chi^+(K) = 1$, then $$\sgnrk_\chi(\calO_K^\times) = 0.$$
\item If $k_\chi^+(K) = 0$, then $$\max\bigl(0, 1- \rho_\chi(K)\bigr) \leq \sgnrk_\chi(\calO_K^\times) \leq 1.$$
\end {enumalph}
\end{theorem}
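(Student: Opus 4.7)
The plan is to factor the map $\sgn_\infty\colon \calO_K^\times \to V_\infty(K)$ through the $2$-Selmer signature map, obtaining a composition of $G_K$-equivariant maps
\[
U \hookrightarrow \Sel_2(K) \xrightarrow{\varphi_K} S(K) \xrightarrow{\pi_\infty} V_\infty(K),
\]
where $U \colonequals \calO_K^\times/(\calO_K^\times)^2$ and $\pi_\infty$ denotes the projection onto the archimedean factor; then $\sgn_\infty(\calO_K^\times)_\chi = \pi_\infty(\varphi_K(U))_\chi$.  Since $U \simeq \F_2[G_K]$ by Lemma \ref{lem:units}, we have $\rk_\chi U = 1$, giving immediately the upper bound $\sgnrk_\chi(\calO_K^\times) \leq 1$ in both parts (a) and (b).

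For (a), Lemma \ref{lem:kchi4SK} combined with the multiplicity-one isotypic structure of $V_\infty(K) \simeq \F_2[G_K]$ (Proposition \ref{prop:GstructV2}) shows that $k_\chi^+(K) = 1$ forces $V_\infty(K)_{\chidual} \subseteq S(K)$. Since $S(K)$ is totally isotropic for $b = b_\infty \perp b_2$, every $s = (s_\infty, s_2) \in S(K)$ must satisfy $b_\infty(s_\infty, V_\infty(K)_{\chidual}) = 0$.  By Theorem \ref{Thm:OrthoDecomp}, the form $b_\infty$ pairs $V_\infty(K)_\chi$ with $V_\infty(K)_{\chidual}$ nondegenerately and kills all inequivalent isotypic components, so $V_\infty(K)_{\chidual}^\perp = \bigoplus_{\psi \not\simeq \chi} V_\infty(K)_\psi$.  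Hence $\pi_\infty(S(K))$, and therefore $\sgn_\infty(\calO_K^\times)$, has no $\chi$-isotypic component, yielding $\sgnrk_\chi(\calO_K^\times) = 0$.

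For (b), the lower bound is vacuous when $\rho_\chi(K) \geq 1$, so the content is to show $\sgnrk_\chi(\calO_K^\times) = 1$ when $\rho_\chi(K) = 0$.  A short case analysis via Theorem \ref{prop:maxisoselfdual}, Theorem \ref{thm:notselfdualgalmod}, and Table \ref{table:cases} shows that the hypotheses $k_\chi^+(K) = 0$ and $\rho_\chi(K) = 0$ force $\rho_{\chidual}(K) = 0$: this is automatic when $\chi$ is self-dual, and when $\chi$ is not self-dual the only route to $\rho_\chi \neq \rho_{\chidual}$ with $k_\chi^+ = 0$ is case (b)(i), which demands $\rho_\chi = \rho_{\chidual} + 1 \geq 1$.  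Lemma \ref{lem::AllKummerPairings} then gives $\rk_\chi \ker(\varphi_K) = \rho_{\chidual}(K) = 0$; combined with the splitting $\Sel_2(K)_\chi = U_\chi$ (from Corollary \ref{cor:sel2ck2} using $\rho_\chi = 0$), this gives $\varphi_K(U_\chi) = S(K)_\chi$ of $\chi$-rank one.

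It remains to verify that $\pi_\infty\colon S(K)_\chi \to V_\infty(K)_\chi$ is injective.  When $\chi$ is self-dual, Theorem \ref{prop:maxisoselfdual}(a) describes each admissible $S(K)_\chi$ as a graph $\F_2(\chi)(1,\nu)$ meeting neither $V_\infty(K)_\chi$ nor $V_2(K)_\chi$, so $\pi_\infty|_{S(K)_\chi}$ is an isomorphism.  The non-self-dual subcases consistent with $k_\chi^+ = 0$ and $\rho_\chi = 0$ are (b)(ii), where $S(K)_\chi = V_\infty(K)_\chi$ so that $\pi_\infty$ is the identity, and (b)(iii), where $S(K)_\chi$ is one of the $q-1$ diagonal subspaces of $V(K)_\chi$ excluded from being $V_\infty(K)_\chi$ or $V_2(K)_\chi$, so $\pi_\infty$ again restricts to an isomorphism.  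In every instance $\sgn_\infty(\calO_K^\times)_\chi = V_\infty(K)_\chi \neq 0$, which gives $\sgnrk_\chi(\calO_K^\times) = 1$.  I expect the main obstacle to be the careful bookkeeping in part (b), specifically confirming uniformly across the self-dual and non-self-dual cases that $S(K)_\chi \cap V_2(K)_\chi = 0$ under the hypothesis $k_\chi^+(K) = 0$ and $\rho_\chi(K) = 0$.
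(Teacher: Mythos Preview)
Your proof is correct, but it takes a noticeably different route from the paper's.  The paper never leaves $\Sel_2(K)$: it observes that $\sgnrk_\chi(\calO_K^\times)=0$ is equivalent to $U_\chi \subseteq \ker(\varphi_{K,\infty})_\chi$, and then computes $\rk_\chi \ker(\varphi_{K,\infty})$ directly via the duality $\ker(\varphi_{K,\infty}) \simeq \Cl_4(K)[2]\spcheck \simeq \Cl^+(K)[2]$ (Lemma~\ref{lem::AllKummerPairings} and Theorem~\ref{thm:RDRP}), obtaining $\rk_\chi \ker(\varphi_{K,\infty}) = \rho_\chi(K) + k_\chi^+(K)$.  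Part~(a) then follows because $k_\chi^+=1$ forces $\ker(\varphi_{K,\infty})_\chi = \Sel_2(K)_\chi$, and part~(b) because $k_\chi^+=\rho_\chi=0$ forces $\ker(\varphi_{K,\infty})_\chi=0$, so $U_\chi$ cannot sit inside it.  No case analysis, no appeal to Theorem~\ref{prop:maxisoselfdual} or Table~\ref{table:cases}.

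By contrast, you push everything forward into $V(K)$ and argue geometrically with $S(K)$.  Your part~(a) is a pleasant orthogonality argument: $V_\infty(K)_{\chidual}\subseteq S(K)$ together with isotropy forces $\pi_\infty(S(K))_\chi=0$.  Your part~(b) is where the approaches diverge most: you invoke the full classification of $S(K)_{\chi^\pm}$ to verify $S(K)_\chi \cap V_2(K)_\chi = 0$ case by case (self-dual, (b)(ii), (b)(iii)), whereas the paper sidesteps this entirely with a one-line rank comparison.  Your route is more explicit about what $S(K)_\chi$ looks like, which is illuminating, but it costs you the detour through $\rho_{\chidual}=0$ and the structural table; the paper's proof is shorter precisely because the identity $\rk_\chi \ker(\varphi_{K,\infty}) = \rho_\chi^+(K)$ packages all of that structure at once.
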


\begin{proof}
The statement $\sgnrk_\chi(\calO_K^\times) = 0$ is equivalent to $\left ( \calO_{K}^\times/(\calO_{K}^{\times})^2 \right ) _\chi \subseteq \ker(\varphi_{K,\infty})_\chi$. We can determine $\ker(\varphi_{K,\infty})_\chi$ by combining Theorem \ref{thm:RDRP} with Lemma \ref{lem::AllKummerPairings} to get
\[ \ker(\varphi_{K,\infty}) \simeq \Cl_4(K)[2]\spcheck \simeq \Cl^+(K)[2]. \] 
Hence, the $\F_2(\chi)$-multiplicities of $\ker(\varphi_{K,\infty})_\chi \subseteq \Sel_2(K)_\chi$ are given as follows:
 \begin{equation*}
\begin{aligned}
\rk_\chi \Sel_2(K) & = \rho_\chi(K) +1\\
\rk_\chi \ker(\varphi_{K,\infty}) & =  \rho_\chi(K) + k_{\chi}^+(K) =\rho_\chi^+(K).
\end{aligned} 
\end{equation*}
When $k_{\chi}^+(K) = 1$, then $(\calO_{K}^\times/\calO_{K}^{\times 2} )_\chi  \subseteq \Sel_2(K)_\chi = \ker(\varphi_{K,\infty})_\chi$ and so $\sgnrk_\chi(\calO_K^\times) = 0$. This establishes (a). For (b), observe that in order for $(\calO_{K}^\times/\calO_{K}^{\times 2} )_\chi \subseteq \ker(\varphi_{K,\infty})_\chi$, we would need to have $\rk_\chi \ker(\varphi_{K,\infty})_\chi \neq 0$ which does not occur if $k_{\chi}^+(K) = \rho_\chi(K) = 0$. 
\end{proof}

\begin{example}
Let $\chi$ be the trivial character so that $\F_2(\chi) \simeq \F_2$ is the trivial $\F_2[G_K]$-module. Then $\sgnrk_\chi(\calO_K^\times) = 1$: indeed, $-1$ generates the unique subspace of $\calO_K^\times/(\calO_K^{\times})^2$ with trivial action.  To see that this accords with Theorem \ref{thm:oddm10}, note that from Lemma \ref{lem:cl2} we have $\rho^+_\chi(K) = \rho_\chi(K) = 0$ so $k_\chi^+ = 0$ and hence $\sgnrk_\chi(\calO_K^\times) = 1$.\end{example}

Summing the contributions of each irreducible gives the following corollary.  

\begin{corollary} \label{cor:Kcycsgrkyup}
Let $K$ be a cyclic number field of odd prime degree $\ell$, and let $f$ be the order of $2$ modulo $\ell$.  Then
\[ \sgnrk(\calO_K^\times) \equiv 1 \pmod{f} \]
and the following statements hold:
\begin{enumalph}
\item If $f$ is odd, then 
\[ \max\left(1,{\textstyle\frac{\ell +1}{2}} - \rk_2\Cl(K)\right)  \leq \sgnrk(\calO_K^\times) \leq \ell. \]
\item If $f$ is even, then 
\[ \max\bigl(1,\ell - \rk_2 \Cl(K)\bigr)  \leq \sgnrk(\calO_K^\times) \leq \ell. \]
\end{enumalph}
\end{corollary}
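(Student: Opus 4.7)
The plan is to sum the character-wise bounds of Theorem \ref{thm:oddm10}. Starting from the decomposition
\[\sgnrk(\calO_K^\times) = \sum_\chi \sgnrk_\chi(\calO_K^\times) \cdot [\F_2(\chi):\F_2],\]
the trivial character contributes $1$ (realized by the image of $-1$), and each of the $(\ell-1)/f$ nontrivial characters satisfies $[\F_2(\chi):\F_2] = f$ (by Example \ref{ex:prime}) and $\sgnrk_\chi \in \{0,1\}$ (by \eqref{eq:sgnrkineq}). This immediately yields $\sgnrk(\calO_K^\times) \equiv 1 \pmod{f}$, the trivial lower bound $\sgnrk(\calO_K^\times) \geq 1$, and the upper bound $\sgnrk(\calO_K^\times) \leq \ell$.

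For the refined lower bound, let $N \colonequals \#\{\chi \text{ nontrivial} : \sgnrk_\chi(\calO_K^\times) = 1\}$, so that $\sgnrk(\calO_K^\times) = 1 + fN$. Theorem \ref{thm:oddm10} guarantees $\sgnrk_\chi = 1$ whenever $k_\chi^+(K) = 0$ and $\rho_\chi(K) = 0$. Since $\rho_\triv(K) = 0$ by Lemma \ref{lem:cl2}, summing over nontrivial characters gives $\sum_{\chi\text{ nontrivial}} \rho_\chi(K) = \rk_2 \Cl(K)/f$; this will be the key arithmetic budget for counting ``bad'' characters.

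For part (b) with $f$ even, Example \ref{ex:prime} states that every nontrivial $\F_2(\chi)$ is self-dual, so Corollary \ref{cor:Wf2gk} forces $k_\chi^+(K) = 0$ for every $\chi$. The number of nontrivial $\chi$ with $\rho_\chi \geq 1$ is then at most $\rk_2 \Cl(K)/f$, giving $N \geq (\ell-1)/f - \rk_2 \Cl(K)/f$ and hence $\sgnrk(\calO_K^\times) \geq \ell - \rk_2 \Cl(K)$.

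For part (a) with $f$ odd, no nontrivial character is self-dual, so the $(\ell-1)/f$ nontrivial characters pair into $P \colonequals (\ell-1)/(2f)$ pairs $\{\chi, \chidual\}$. The main obstacle is a case-by-case verification, via the five possibilities of Theorem \ref{prop:maxisoselfdual}(b) recorded in Table \ref{table:cases}, of the pairwise inequality
\[2 - N_{\text{pair}} \leq 1 + \rho_\chi(K) + \rho_{\chidual}(K),\]
where $N_{\text{pair}} \in \{0,1,2\}$ counts the pair members with $\sgnrk = 1$ forced by Theorem \ref{thm:oddm10}. The tight cases are (b)(ii)/(ii$'$) with $\rho_\chi = 0$ (where $N_{\text{pair}} = 1$ and the right-hand side equals $1$) and (b)(i)/(i$'$) (where $N_{\text{pair}} = 0$ but $\rho_\chi + \rho_{\chidual} \geq 1$ is automatic); case (b)(iii) at $\rho = 0$ is trivial, and all other cases are slack. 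Summing over pairs yields $2P - N \leq P + \rk_2 \Cl(K)/f$, so $N \geq P - \rk_2 \Cl(K)/f$, and therefore $\sgnrk(\calO_K^\times) \geq 1 + fN = (\ell+1)/2 - \rk_2 \Cl(K)$, completing the proof.
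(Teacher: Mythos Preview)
Your proof is correct and follows essentially the same approach as the paper: both arguments sum the character-wise bounds of Theorem \ref{thm:oddm10}, treating part (b) by noting that self-duality forces $k_\chi^+=0$ for every $\chi$, and treating part (a) by establishing the per-pair inequality $\sgnrk_\chi + \sgnrk_{\chidual} \geq 1 - \rho_\chi - \rho_{\chidual}$ and then summing over the $(\ell-1)/(2f)$ dual pairs. The only difference is presentational: the paper obtains the per-pair inequality by the single dichotomy ``some $k^+$ in the pair equals $1$'' versus ``both equal $0$'' (appealing directly to Corollary \ref{cor:Wf2gk} and Theorem \ref{thm:oddm10}), whereas you verify it by running through the five cases of Table \ref{table:cases}; the paper's route is shorter, but both are valid.
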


\begin{proof}
By Example \ref{ex:prime}, all nontrivial irreducible $\F_2[G_K]$-modules have cardinality $2^f$, and so together with the trivial component generated by $-1$ gives the first congruence.  

The upper bounds in (a) and (b) are immediate since $\rk_2 V_\infty = \ell.$  To prove (b), note that all $\F_2[G_K]$-modules are self-dual by Lemma \ref{lem:self-duality}, hence Corollary \ref{cor:Wf2gk} implies that $k_{\chi}^+(K) = 0$. By adding up Theorem \ref{thm:oddm10}(b) for all $1 + \frac{\ell-1}{f}$ irreducible $\F_2[G_K]$-modules as in Example \ref{ex:prime}, we conclude the result. 

We conclude by proving statement (a) by considering $\chi$ and $\chidual$ together.  Every nontrivial $\F_2[G]$-module is non-self-dual by Lemma \ref{lem:self-duality}.  We refer to the cases in Table \ref{table:cases0}.  We claim that for every nontrivial character $\chi$, we have 
\begin{equation} \label{eqn:yupsgnrkbnd}
1-\rho_\chi(K)-\rho_{\chidual}(K) \leq \sgnrk_\chi(\calO_K^\times) + \sgnrk_{\chidual}(\calO_K^\times).  
\end{equation}
Indeed, if $k_\chi^+(K)=1$ (cases \textsf{B}${}^\prime$/\textsf{C}${}^\prime$), then $k^+_{\chidual}(K)=0$ by Corollary \ref{cor:Wf2gk}, and $\sgnrk_\chi(\calO_K^\times) = 0$ by Theorem \ref{thm:oddm10}(a) so by Theorem \ref{thm:oddm10}(b) we have 
\[ 1-\rho_\chi(K)-\rho_{\chidual}(K) \leq 1-\rho_{\chidual}(K) \leq \sgnrk_{\chidual}(\calO_K^\times) = \sgnrk_{\chi}(\calO_K^\times) + \sgnrk_{\chidual}(\calO_K^\times).  \]
By symmetry, the same conclusion holds when $k_{\chidual}^+(K)=1$ (cases \textsf{B}/\textsf{C}).  In the remaining case \textsf{D} where $k_\chi^+(K)=k_{\chidual}^+(K)=0$, summing Theorem \ref{thm:oddm10}(b) twice gives
\[ 1-\rho_\chi(K)-\rho_{\chidual}(K) < 2-\rho_\chi(K)-\rho_{\chidual}(K) \leq \sgnrk_\chi(\calO_K^\times) + \sgnrk_{\chidual}(\calO_K^\times). \]
This proves the claim in all cases.  

Summing \eqref{eqn:yupsgnrkbnd} over the $(\ell-1)/(2f)$ pairs of irreducible nontrivial $\F_2[G_K]$-modules as well as the trivial $\F_2[G_K]$-module, then gives
\[ \frac{\ell+1}{2}-\rk_2\Cl(K) \leq \sgnrk(\calO_K^\times). \qedhere \]
\end{proof}

We record the following special case of Corollary \ref{cor:Kcycsgrkyup} (observed for $\ell=3$ by Armitage--Fr\"ohlich \cite[Theorem V]{ArmitageFrohlich}).

\begin{corollary} \label{cor:1orell}
If $K$ is a cyclic number field of prime degree $\ell$ where $2$ is a primitive root modulo $\ell$, then $\sgnrk(\calO_{K}^\times) = 1$ or $\ell$. If the class number of $K$ is odd, then $\sgnrk(\calO_{K}^\times) = \ell$.
\end{corollary}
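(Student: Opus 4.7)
The plan is to extract this as a direct special case of Corollary \ref{cor:Kcycsgrkyup}. Since $\ell$ is an odd prime and $2$ is a primitive root modulo $\ell$, the order of $2$ in $(\bZ/\ell\bZ)^\times$ is $f = \ell - 1$, which is even. By Example \ref{ex:prime}, there are only two isomorphism classes of irreducible $\F_2[G_K]$-modules: the trivial module $\F_2$ (of $\F_2$-dimension $1$) and a unique nontrivial module $\F_2(\chi) \simeq \F_{2^{\ell-1}}$ (of $\F_2$-dimension $\ell - 1$).

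For the first assertion, I invoke the congruence $\sgnrk(\calO_K^\times) \equiv 1 \pmod{f}$ from Corollary \ref{cor:Kcycsgrkyup}, which in our setting reads $\sgnrk(\calO_K^\times) \equiv 1 \pmod{\ell - 1}$. Combined with the upper bound $\sgnrk(\calO_K^\times) \leq \ell$ (coming from the trivial embedding $\sgn_\infty(\calO_K^\times) \hookrightarrow V_\infty(K) \simeq \F_2^\ell$), the only integers satisfying both constraints are $1$ and $\ell$. Alternatively, one can see this directly from the decomposition $\calO_K^\times/(\calO_K^\times)^2 \simeq \F_2 \oplus \F_2(\chi)$: the trivial summand always contributes $1$ to the signature rank (generated by $-1$), while the nontrivial summand $\F_2(\chi)$ is irreducible of $\F_2$-dimension $\ell - 1$ and therefore either injects into $V_\infty(K)$ or lies entirely in $\ker(\sgn_\infty)$.

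For the second assertion, suppose that the class number of $K$ is odd, so $\rk_2 \Cl(K) = 0$. Since $f = \ell - 1$ is even, Corollary \ref{cor:Kcycsgrkyup}(b) applies and yields the lower bound
\[ \sgnrk(\calO_K^\times) \geq \max\bigl(1,\ell - \rk_2 \Cl(K)\bigr) = \ell, \]
which combined with the upper bound $\sgnrk(\calO_K^\times) \leq \ell$ forces $\sgnrk(\calO_K^\times) = \ell$. There is no real obstacle here: the statement is an immediate specialization of Corollary \ref{cor:Kcycsgrkyup} once one observes that for $\ell$ an odd prime, $f = \ell - 1$ is necessarily even, placing us in case (b).
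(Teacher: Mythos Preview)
Your proof is correct and matches the paper's approach exactly: the corollary is stated immediately after Corollary~\ref{cor:Kcycsgrkyup} without its own proof, so it is intended as precisely the specialization you give (taking $f=\ell-1$ even, applying the congruence and the bounds of part~(b)).
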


The above setup allows us to recover many other related statements.  We illustrate with the following.  

\begin{theorem}[{Ichimura \cite[Theorem 2]{Ichimura}}] Let $K$ be an abelian number field of odd degree with Galois group $G_K$.  Let $\chi$ be an $\Ftwobar$-character of $G_K$.  Then the following statements are equivalent:
\begin{enumerate}
\item[\textup{(i)}] $\left ( \calO_{K}^\times/(\calO_{K}^{\times})^2 \right ) _{\chi^{\pm}}  \cap \ker(\sgn_\infty) \neq \{0\}$.
\item[\textup{(ii)}] $\left ( \calO_{K}^\times/(\calO_{K}^{\times})^2 \right ) _{\chi^{\pm}}  \cap \ker(\sgn_2) \neq \{0\}$.
\end{enumerate}
\end{theorem}
\begin{proof}
This statement is trivially true whenever $\left ( \calO_{K}^\times/(\calO_{K}^{\times})^2 \right ) _{\chi^{\pm}}  \cap \ker(\varphi_{K}) \neq \{0\}$.  Otherwise, we have  $S(K)_{\chi^{\pm}} = \varphi_K \bigl( \left( \calO_{K}^\times/(\calO_{K}^{\times})^2 \right ) _{\chi^{\pm}} \bigr)$, so the statement is $S(K)_{\chi^\pm} \cap V_\infty(K) = \{0\}$ if and only if $S(K)_{\chi^\pm} \cap V_2(K) = \{0\}$ and then it is equivalent to Proposition \ref{prop:kwKW4}. 
\end{proof}

\section{Conjectures} \label{sec:models}

Even with many aspects determined in a rigid way by the results of the previous section, there still remain scenarios where randomness remains.  In this section, we propose a model in the spirit of the Cohen--Lenstra heuristics for this remaining behavior.

\subsection{Isotropy ranks}

We begin by developing a model for isotropy ranks when $K$ runs over a collection of $G$-number fields (i.e., Galois number fields $K$ equipped with an isomorphism such that $\Gal(K\,|\,\bQ) \simeq G$), where $G$ is a fixed finite abelian group of odd order. In light of Theorem \ref{mainthm::Representationtheory} (and Table \ref{table:cases0}) a heuristic is only necessary to distinguish cases \textsf{C},\textsf{C}${}^\prime$ from \textsf{D}, i.e., when $\chi$ is a non-self-dual $\Ftwobar$-character of $G$ and the collection is restricted to those $K$ such that $\rho_\chi(K) = \rho_{\chidual}(K)$.  For all other cases, the isotropy ranks are determined. 

We make the following heuristic assumption:
\begin{itemize}
\item[(H1)]\label{item:H1} For the collection of $G$-number fields $K$ such that $\rho_\chi(K)=\rho_{\chidual}(K)$, the image component $S(K)_{\chi^{\pm}}$ as defined in \eqref{SKchipm} is distributed as a uniformly random $G$-invariant maximal totally isotropic subspace of $\F_2[G]^2_{\chi^{\pm}}$ (see Example \ref{ex:mult1}).
\end{itemize}

The assumption (\hyperref[item:H1]{H1}), combined with the restrictions and masses in Table \ref{table:cases0} lead us to one of our main conjectures. 

\begin{conjecture}\label{mainconj::Tworanks0} 
Let $G$ be an odd finite abelian group, and let $\chi$ be a non-self-dual $\Ftwobar$-character of $G$ with underlying module of cardinality $\#\F_2(\chi) = q$.  Then as $K$ varies over $G$-number fields such that $\rho_\chi(K) = \rho_{\chidual}(K)$, we have:
{\setlength{\arraycolsep}{0.3ex}
\begin{eqnarray*}
\Prob \bigl(k^+_\chi(K)+k^+_{\chidual}(K) = 0 \bigr)   &=& \frac{q-1 }{ q +1}; \\ 
\Prob \bigl (k^+_\chi(K)+k^+_{\chidual}(K) = 1 \bigr ) &=&  \frac{2 }{q +1}. 
\end{eqnarray*}}
\end{conjecture}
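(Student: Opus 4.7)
The plan is to deduce the conjecture essentially as a bookkeeping consequence of hypothesis (\hyperref[item:H1]{H1}), the classification of maximal totally isotropic subspaces in Theorem \ref{prop:maxisoselfdual}(b), and the tabulation of isotropy ranks in Table \ref{table:cases}.

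First I would observe that, because $\chi$ is not self-dual and we condition on $\rho_\chi(K) = \rho_{\chidual}(K)$, Theorem \ref{thm:notselfdualgalmod} forces $S(K)_{\chi^\pm} \simeq \F_2(\chi) \oplus \F_2(\chidual)$ as $\F_2[G_K]$-modules. Among the $q+3$ distinct $G$-invariant maximal totally isotropic subspaces enumerated in Theorem \ref{prop:maxisoselfdual}(b), exactly the $q+1$ of the type $\F_2(\chi) \oplus \F_2(\chidual)$ are admissible: the two ``coordinate'' subspaces appearing in cases (ii) and (ii${}'$), together with the $q-1$ subspaces in case (iii) that meet $V_\infty(K)$ trivially. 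Cases (i) and (i${}'$), which have isomorphism type $\F_2(\chi)^2$ or $\F_2(\chidual)^2$, are excluded by the conditioning.

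Next I would invoke (\hyperref[item:H1]{H1}) to assert that, as $K$ ranges over the restricted family, $S(K)_{\chi^\pm}$ is equidistributed among these $q+1$ admissible subspaces. The values of $k_\chi^+(K) + k_{\chidual}^+(K)$ can then be read off directly from Table \ref{table:cases}: the two subspaces in cases (ii) and (ii${}'$) each contribute $k_\chi^+ + k_{\chidual}^+ = 1$, while each of the $q-1$ subspaces in case (iii) contributes $k_\chi^+ + k_{\chidual}^+ = 0$. Uniformity over the $q+1$ options therefore yields
\[ \Prob\bigl(k_\chi^+(K) + k_{\chidual}^+(K) = 0\bigr) = \frac{q-1}{q+1} \quad \text{and} \quad \Prob\bigl(k_\chi^+(K) + k_{\chidual}^+(K) = 1\bigr) = \frac{2}{q+1}, \]
which is the content of the conjecture.

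In short, the deduction from (\hyperref[item:H1]{H1}) is a direct counting argument once the bijection between the types of $S(K)_{\chi^\pm}$ and the quantities $(k_\chi^+, k_{\chidual}^+)$ in Table \ref{table:cases} is in hand. The main obstacle is therefore not the proof itself but the justification of (\hyperref[item:H1]{H1}): one must argue, in the spirit of Cohen--Lenstra, that the signature datum for a ``typical'' $G$-number field should be modeled by a uniform random $G$-invariant maximal isotropic subspace. Any unconditional result in this direction would require both a choice of fair counting function à la Wood and significant new input beyond the structural constraints established in Sections \ref{sec:galmodstruct}--\ref{sec:structresults}.
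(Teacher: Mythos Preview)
Your proposal is correct and mirrors the paper's own derivation: the paper states that the conjecture follows from (\hyperref[item:H1]{H1}) together with the restriction on $S(K)_{\chi^\pm}$ from Theorem \ref{thm:notselfdualgalmod} and the counts in Theorem \ref{prop:maxisoselfdual}(b)(ii)--(iii), which is exactly the bookkeeping you carry out. Your explicit use of Table \ref{table:cases} to read off $(k_\chi^+,k_{\chidual}^+)$ and your closing remarks on the status of (\hyperref[item:H1]{H1}) are natural elaborations, but the core argument is the same.
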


The same heuristic implies the same conjecture for the $2$-adic isotropy ranks; indeed by Proposition \ref{prop:kwKW4}, we have $k^+_\chi(K)+k^+_{\chidual}(K) = k_{4,\chi}(K)+k_{4,\chidual}(K)$.  A particularly simple case of Conjecture \ref{mainconj::Tworanks0} is complementary to Corollary \ref{cor:umpmod34}.

\begin{conjecture}\label{conj:3mod4}
Let $G = \Z/\ell\Z$ where $\ell \equiv 7 \pmod{8}$ is prime and suppose $2$ has order $\frac{\ell-1}{2}$ in $(\bZ/\ell\bZ)^\times$, and let $q \colonequals 2^{\frac{\ell-1}{2}}$.  As $K$ varies over $G$-number fields such that $\Cl(K)[2]$ is a self-dual $\F_2[G_K]$-module, we have
\begin{eqnarray*}
\Cl^+(K)[2]  \simeq 
\begin{cases}  \Cl(K)[2] & \text{with probability} \quad  \frac{q-1}{q+1}; \\
 \F_2(\chi) \oplus \Cl(K)[2] & \text{with probability} \quad   \frac{1}{q+1}; \\
  \F_2(\chidual) \oplus \Cl(K)[2] & \text{with probability} \quad  \frac{1}{q+1}. \end{cases}
\end{eqnarray*}
where $\chi$ is a nontrivial $\Ftwobar$-character of $G_K$.
\end{conjecture}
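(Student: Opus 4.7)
\medskip

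The plan is to reduce Conjecture \ref{conj:3mod4} directly to Conjecture \ref{mainconj::Tworanks0} (which itself is a consequence of the uniformity hypothesis (\hyperref[item:H1]{H1})) by leveraging the structural results of Section \ref{sec:structresults}. The first step is to identify which irreducible $\F_2[G]$-modules can arise. Under the hypothesis that $\ell \equiv 7\pmod 8$ and $2$ has order $(\ell-1)/2$ in $(\bZ/\ell\bZ)^\times$, Example \ref{ex:prime} together with Lemma \ref{lem:self-duality} gives exactly three isomorphism classes of irreducible $\F_2[G]$-modules: the trivial module $\F_2$ (self-dual) and a single non-self-dual pair $\F_2(\chi), \F_2(\chidual)$ each of cardinality $q = 2^{(\ell-1)/2}$. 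Moreover, by Lemma \ref{lem:cl2}, the trivial character contributes nothing to $\Cl(K)[2]$ or $\Cl^+(K)[2]$.

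Next I would translate the self-duality hypothesis on $\Cl(K)[2]$. Because $\F_2[G]$ is semisimple and the trivial isotypic component of $\Cl(K)[2]$ and $\Cl^+(K)[2]$ vanishes, isomorphism classes of these modules are determined by the pair of multiplicities $(\rho_\chi, \rho_{\chidual})$ and $(\rho_\chi^+, \rho_{\chidual}^+)$ respectively. Thus $\Cl(K)[2]$ being self-dual as an $\F_2[G_K]$-module is equivalent to $\rho_\chi(K) = \rho_{\chidual}(K)$, which is precisely the family over which Conjecture \ref{mainconj::Tworanks0} makes its prediction. By the canonical orthogonal decomposition \eqref{SKchipm}, the isotropy data $(k_\chi^+, k_{\chidual}^+)$ are encoded entirely in the component $S(K)_{\chi^\pm}$ of the $2$-Selmer signature image.

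Now I would apply (\hyperref[item:H1]{H1}): restricted to this subfamily, Theorem \ref{thm:notselfdualgalmod} forces $S(K)_{\chi^\pm} \simeq \F_2(\chi) \oplus \F_2(\chidual)$, which by Theorem \ref{prop:maxisoselfdual}(b) corresponds to exactly $q+1$ of the $q+3$ maximal totally isotropic $G$-invariant subspaces of $V(K)_{\chi^\pm}$, namely the cases (ii), (ii${}^\prime$), and (iii). Reading Table \ref{table:cases}, case (iii) ($q-1$ subspaces) yields $k_\chi^+ = k_{\chidual}^+ = 0$, hence $\Cl^+(K)[2] \simeq \Cl(K)[2]$; case (ii) yields $k_\chi^+ = 0$, $k_{\chidual}^+ = 1$, giving $\Cl^+(K)[2] \simeq \F_2(\chidual) \oplus \Cl(K)[2]$; and case (ii${}^\prime$) symmetrically yields $\Cl^+(K)[2] \simeq \F_2(\chi) \oplus \Cl(K)[2]$. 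Uniformity over the $q+1$ subspaces then delivers the probabilities $(q-1)/(q+1)$, $1/(q+1)$, $1/(q+1)$ as claimed, and semisimplicity of $\F_2[G]$ lets us promote equality of isotypic multiplicities to a genuine isomorphism of $\F_2[G_K]$-modules.

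The main obstacle is not a technical step in the derivation — the argument is essentially a bookkeeping exercise once (\hyperref[item:H1]{H1}) is in place — but rather the justification of (\hyperref[item:H1]{H1}) itself and the sensitivity of the prediction to the ordering of $G$-number fields. In particular, one must verify that conditioning on $\rho_\chi(K) = \rho_{\chidual}(K)$ does not spoil the uniform distribution of $S(K)_{\chi^\pm}$ among the $q+1$ maximal totally isotropic $G$-invariant subspaces with the relevant isotypic structure; this is where the independence philosophy of Cohen--Lenstra is invoked, and where the computational evidence of Section \ref{sec:calcs} provides the principal support.
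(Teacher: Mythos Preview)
Your proposal is correct and follows essentially the same approach as the paper: the paper presents Conjecture~\ref{conj:3mod4} as a particularly simple case of Conjecture~\ref{mainconj::Tworanks0}, which is itself derived from hypothesis (\hyperref[item:H1]{H1}) together with the constraint of Theorem~\ref{thm:notselfdualgalmod} and the count in Theorem~\ref{prop:maxisoselfdual}(b)(ii)--(iii). Your explicit unpacking---identifying the three irreducibles via Example~\ref{ex:prime} and Lemma~\ref{lem:self-duality}, translating self-duality of $\Cl(K)[2]$ to $\rho_\chi=\rho_{\chidual}$, and reading off the three cases from Table~\ref{table:cases}---is exactly the bookkeeping the paper leaves implicit.
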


We predict the same probabilities as in Conjectures \ref{mainconj::Tworanks0} and \ref{conj:3mod4} in other natural subfamilies, such as when we fix the value $\rho_\chi(K)=\rho_{\chidual}(K)=\vrho$; in particular,  this includes the family of $G$-number fields with odd class number (i.e., those with $\rk_2 \Cl(K)=0$). 

\subsection{Unit signature ranks}

We now extend these heuristics to the unit signature rank.  Recall that the $2$-Selmer group $\Sel_2(K)$ is an $\F_2[G_K]$-module containing $\cO^\times_K/(\cO_K^{\times})^2$ and the subspace $\ker(\varphi_{K,\infty}) \subseteq \Sel_2(K)$ of totally positive elements.   To study the distribution of the units, for each odd order abelian group $G$, we make the following heuristic assumption:
\begin{itemize}
\item[(H2)\label{item:H2}] For the collection of $G$-number fields $K$, the subspace of $\Sel_2(K)$ generated by $\calO_K^\times/(\calO_K^{\times})^2$ is distributed as a uniformly random $\F_2[G]$-submodule isomorphic to $\F_2[G]$ containing $-1$.  
\end{itemize} 

Decomposing into irreducibles, since $\cO_K^\times/(\cO_K^{\times})^2 \simeq \F_2[G]$ we have $\rk_\chi \cO_K^\times =1$ for each irreducible $\F_2(\chi)$, and so we might also make the heuristic assumption:
\begin{itemize}
\item[(H2${}^\prime$)]\label{item:H2prime}
For the collection of $G$-number fields $K$ and for each \emph{nontrivial} $\Ftwobar$-character of $G$, the subspace of $\Sel_2(K)_\chi$ generated by $(\calO_K^\times/(\calO_K^{\times})^2)_\chi$ is distributed as a uniformly random, $1$-dimensional $\F_2(\chi)$-subspace.  
\end{itemize}
Note that (\hyperref[item:H2]{H2}) is equivalent to (\hyperref[item:H2prime]{H2${}^\prime$}) and an \emph{independence  assumption} for each $\F_2(\chi)$, i.e., we expect no extra structure relating different isotypic components of the units inside $\Sel_2(K)$.

\begin{remark}
To make assumption (\hyperref[item:H2]{H2}), we consider $\cO_K^\times/(\cO_K^{\times})^2 \subseteq \Sel_2(K)$ and we do \emph{not} look at the 2-Selmer map $\varphi_{K,\infty}$. The pairing and duality relations that put restrictions on the $k_\chi(K)$'s as in Corollary \ref{cor:Wf2gk} will have an effect on the subspace $\ker(\varphi_{K,\infty}) \subseteq \Sel_2(K)$; in particular, it will impose constraints on the isotypic components of $\ker(\varphi_{K,\infty})$. However, we model the subspace $\ker(\varphi_{K,\infty})$ independently of $\cO_K^\times/(\cO_K^{\times})^2$, so there are no restrictions on $(\cO_K^\times/(\cO_K^{\times})^2)_\chi$ inside $\Sel_2(K)_\chi$.
\end{remark}

We now state a conjecture for collections of $G$-number fields that are not completely determined by Theorem \ref{thm:oddm10}. 
We recall from \eqref{eq:sgnrkineq} that $\sgnrk_\chi(\calO_K^\times)=0$ or $1$ for any $\Ftwobar$-character $\chi$ of $G_K$.

\begin{conjecture} \label{conj::ConjUnitSignatureRanks}
Let $G$ be an abelian group of odd order and let $\chi$ be an $\Ftwobar$-character of $G$ with $q \colonequals \#\F_2(\chi)$.  As $K$ varies over $G$-number fields such that $\rk_\chi \Cl^+(K) = \rk_\chi \Cl(K) = \vrho$, we have
     \[ \Prob\bigl(\sgnrk_\chi(\calO_K^\times) = 0\bigr) = \frac{q^{\vrho}-1}{q^{\vrho+1}-1}; \]
     \[ \Prob\bigl(\sgnrk_\chi(\calO_K^\times) = 1\bigr) = \frac{q^{\vrho+1}-q^{\vrho}}{q^{\vrho+1}-1}. \]
\end{conjecture}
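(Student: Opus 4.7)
The plan is to derive Conjecture \ref{conj::ConjUnitSignatureRanks} directly from heuristic assumption (\hyperref[item:H2prime]{H2${}^\prime$}) together with the $\F_2(\chi)$-dimension computations already carried out in the proof of Theorem \ref{thm:oddm10}. I first handle nontrivial $\chi$; the trivial case is a degenerate check.

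The first step is to pin down the ambient space and the relevant subspace. From Corollary \ref{cor:sel2ck2} and Lemma \ref{lem:units}, we have $\Sel_2(K) \simeq \F_2[G_K] \oplus \Cl(K)[2]$, so the hypothesis $\rk_\chi \Cl(K) = \vrho$ gives $\rk_\chi \Sel_2(K) = \vrho+1$, i.e.\ $\Sel_2(K)_\chi \simeq \F_2(\chi)^{\vrho+1}$. Next, using Theorem \ref{thm:RDRP} together with Lemma \ref{lem::AllKummerPairings}, we identify $\ker(\varphi_{K,\infty}) \simeq \Cl^+(K)[2]$ as $\F_2[G_K]$-modules, so the assumption $\rk_\chi \Cl^+(K) = \vrho$ gives $\rk_\chi \ker(\varphi_{K,\infty}) = \vrho$; equivalently $k_\chi^+(K) = 0$, putting us squarely in the range of Theorem \ref{thm:oddm10}(b).

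The second step is to reinterpret the event. By Lemma \ref{lem:units}, the isotypic component $(\calO_K^\times/(\calO_K^\times)^2)_\chi$ is a $1$-dimensional $\F_2(\chi)$-subspace of $\Sel_2(K)_\chi$. By definition of $\sgnrk_\chi$ and of $\varphi_{K,\infty}$, we have $\sgnrk_\chi(\calO_K^\times) = 0$ if and only if this line is contained in $\ker(\varphi_{K,\infty})_\chi$. Under (\hyperref[item:H2prime]{H2${}^\prime$}), this line is distributed as a uniformly random point of $\PP(\Sel_2(K)_\chi) \simeq \PP^{\vrho}(\F_2(\chi))$ (no containment condition on $-1$ is imposed, since the trivial $\F_2[G_K]$-module generated by $-1$ lives outside the $\chi$-isotypic component).

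The third step is a short count: the number of $\F_2(\chi)$-lines in an $\F_2(\chi)$-space of dimension $d$ is $(q^d - 1)/(q-1)$, so
\[
\Prob\bigl(\sgnrk_\chi(\calO_K^\times) = 0\bigr)
 = \frac{\#\PP(\ker(\varphi_{K,\infty})_\chi)}{\#\PP(\Sel_2(K)_\chi)}
 = \frac{(q^{\vrho}-1)/(q-1)}{(q^{\vrho+1}-1)/(q-1)}
 = \frac{q^{\vrho}-1}{q^{\vrho+1}-1},
\]
and the complementary probability gives the stated formula for $\sgnrk_\chi = 1$. Finally, one checks the trivial character separately: there $\vrho = 0$ by Lemma \ref{lem:cl2}, and the formula reads $\Prob(\sgnrk_\chi = 1) = 1$, matching the forced value from the example after Theorem \ref{thm:oddm10}.

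There is no real obstacle here beyond bookkeeping: the content of the conjecture is precisely that (\hyperref[item:H2prime]{H2${}^\prime$}) holds, and once it is granted the result is a finite projective-geometry count. The only subtlety worth emphasizing in the write-up is the independence of the condition on $-1$ from the $\chi$-component for nontrivial $\chi$, which is what lets us pass from (\hyperref[item:H2]{H2}) to the cleaner statement (\hyperref[item:H2prime]{H2${}^\prime$}) in the argument.
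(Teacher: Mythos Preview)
Your proposal is correct and follows essentially the same approach as the paper: compute $\rk_\chi \Sel_2(K)=\vrho+1$, $\rk_\chi \ker(\varphi_{K,\infty})=\vrho$, and $\rk_\chi(\calO_K^\times/(\calO_K^\times)^2)=1$, then under (\hyperref[item:H2prime]{H2${}^\prime$}) compute the probability as the ratio of $\F_2(\chi)$-lines in the kernel to those in the full Selmer $\chi$-component. Your write-up is slightly more detailed---you cite the supporting lemmas explicitly, handle the trivial character separately, and remark on why the containment of $-1$ is irrelevant to the $\chi$-component---but the argument is the same.
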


\begin{proof}[Proof assuming \textup{(\hyperref[item:H2prime]{H2${}^\prime$})}.]
The dimensions of the isotypic components are given as follows:
\begin{itemize}
\item $\rk_{\chi} \calO_K^{\times}/(\calO_{K}^\times)^{2} =1$;
\item $\rk_\chi  \ker(\varphi_{K,\infty}) = r$; and
\item $\rk_\chi  \Sel_2(K) = r+1$.
\end{itemize}
 Therefore, under (\hyperref[item:H2prime]{H2${}^\prime$}) we would have
 \begin{equation*}
     \begin{aligned}
     \Prob\bigl((\calO_K^{\times}/(\calO_{K}^\times)^{2})_\chi \subseteq \ker(\varphi_{K,\infty})_\chi\bigr) &= \frac{\text{\#\{1-dimensional subspaces of } \ker(\varphi_{K,\infty})_\chi \} }{\#\{\text{1-dimensional subspaces of } \Sel_2(K)_\chi \}} \\
     &= \frac{(q^{\vrho}-1)/(q-1)}{(q^{\vrho+1}-1)/(q-1)} = \frac{q^{\vrho}-1}{q^{\vrho+1}-1}
     \end{aligned}
     \end{equation*}
     as claimed.
     \end{proof}
     
We now turn to the simplest case, where $G$ is cyclic of prime order $\ell$ and $2$ is a primitive root mod $\ell$.  By Corollary \ref{cor:1orell}, we conclude that $\sgnrk(\calO_K^\times)=1$ or $\ell$ and $\rk_2 \Cl(K) \equiv 0 \pmod{\ell - 1}$ by Lemma \ref{lem:cl2}

\begin{conjecture} \label{conj:sgnrkcondi} 
Let $\ell$ be an odd prime such that $2$ is a primitive root modulo $\ell$, and let $q \colonequals 2^{\ell-1}$. If $r \in \Z_{\geq 0}$,  then as $K$ ranges over cyclic number fields of degree $\ell$ with $\rk_2 \Cl(K) = (\ell-1)r$, we have
\begin{equation*} 
\Prob\bigl(  \sgnrk(\cO_K^\times) = 1  \bigr)  = \frac{q^r-1}{q^{r+1}-1}; \qquad
\Prob\bigl(  \sgnrk(\cO_K^\times) = \ell  \bigr)  = \frac{q^{r+1} - q^{r}}{q^{r+1}-1}.
\end{equation*}
\end{conjecture}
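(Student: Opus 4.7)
The plan is to derive Conjecture \ref{conj:sgnrkcondi} by specializing Conjecture \ref{conj::ConjUnitSignatureRanks} (and the underlying hypothesis (\hyperref[item:H2prime]{H2${}^\prime$})) to cyclic fields of prime degree $\ell$ with $2$ a primitive root modulo $\ell$. The bulk of the work is representation-theoretic bookkeeping: the only genuinely probabilistic input is a single uniform-line count in a two-step flag of $\F_2(\chi)$-modules.

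First I would collect the simplifications particular to this family. Since $2$ is a primitive root modulo $\ell$, the order $f$ of $2$ equals $\ell - 1$, so by Example \ref{ex:prime} there is a unique nontrivial irreducible $\F_2[G_K]$-module up to isomorphism, namely $\F_2(\chi) \simeq \F_{2^{\ell-1}}$, and the regular representation decomposes as $\F_2[G_K] \simeq \F_2 \oplus \F_2(\chi)$. Since $2^{(\ell-1)/2} \equiv -1 \pmod{\ell}$, Lemma \ref{lem:self-duality} shows that $\chi$ is self-dual, and then Corollary \ref{cor:Wf2gk} forces $k_\chi^+(K) = 0$; equivalently $\rk_\chi \Cl^+(K) = \rk_\chi \Cl(K)$, so the hypothesis of Conjecture \ref{conj::ConjUnitSignatureRanks} is automatic in this family.

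Next I would translate between total and $\chi$-isotypic invariants. By Lemma \ref{lem:cl2} the trivial isotypic component of $\Cl(K)[2]$ vanishes, so $\rk_2 \Cl(K) = (\ell-1)\,\rk_\chi \Cl(K)$; thus the hypothesis $\rk_2 \Cl(K) = (\ell-1) r$ is equivalent to $\rho_\chi(K) = r$. Decomposing $\sgn_\infty(\calO_K^\times)$ into isotypic pieces,
\[ \sgnrk(\calO_K^\times) = \sgnrk_{\triv}(\calO_K^\times) + (\ell-1)\,\sgnrk_\chi(\calO_K^\times), \]
the trivial component is generated by $-1$ so $\sgnrk_{\triv}(\calO_K^\times) = 1$, while $\sgnrk_\chi(\calO_K^\times) \in \{0,1\}$ by \eqref{eq:sgnrkineq}. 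Hence $\sgnrk(\calO_K^\times) = 1$ precisely when $\sgnrk_\chi(\calO_K^\times) = 0$ and $\sgnrk(\calO_K^\times) = \ell$ otherwise, so the claimed probabilities follow from computing $\Prob(\sgnrk_\chi(\calO_K^\times)=0)$ and its complement.

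Finally, I would apply (\hyperref[item:H2prime]{H2${}^\prime$}): the one-dimensional $\F_2(\chi)$-subspace $(\calO_K^\times/(\calO_K^\times)^2)_\chi$ is modelled as uniformly random inside $\Sel_2(K)_\chi$, and using the identification $\ker(\varphi_{K,\infty}) \simeq \Cl^+(K)[2]$ (from Theorem \ref{thm:RDRP} and Lemma \ref{lem::AllKummerPairings}) we have $\rk_\chi \ker(\varphi_{K,\infty}) = r$ and $\rk_\chi \Sel_2(K) = r+1$. Counting $\F_2(\chi)$-lines in each, with $q = 2^{\ell-1} = \#\F_2(\chi)$,
\[ \Prob\bigl((\calO_K^\times/(\calO_K^\times)^2)_\chi \subseteq \ker(\varphi_{K,\infty})_\chi\bigr) = \frac{(q^r-1)/(q-1)}{(q^{r+1}-1)/(q-1)} = \frac{q^r-1}{q^{r+1}-1}, \]
giving the two stated probabilities. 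The only substantive point beyond bookkeeping is that conditioning on $\rk_2\Cl(K) = (\ell-1)r$ should not bias the distribution of the $\chi$-isotypic units line, i.e.\ that the location of $(\calO_K^\times/(\calO_K^\times)^2)_\chi$ inside $\Sel_2(K)_\chi$ is independent of the isomorphism class of $\Cl(K)[2]_\chi$; this independence is built into (\hyperref[item:H2prime]{H2${}^\prime$}) and is the one place where the heuristic assumption, rather than the structural results, does the work.
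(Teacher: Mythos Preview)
Your proposal is correct and follows exactly the paper's approach: the paper derives Conjecture~\ref{conj:sgnrkcondi} as the specialization of Conjecture~\ref{conj::ConjUnitSignatureRanks} (and its proof under (\hyperref[item:H2prime]{H2${}^\prime$})) to the case where $2$ is a primitive root modulo~$\ell$, noting that there is a unique nontrivial $\Ftwobar$-character $\chi$ (necessarily self-dual, so $k_\chi^+(K)=0$ automatically) and that $\rk_2\Cl(K)=(\ell-1)r$ is equivalent to $\rk_\chi\Cl(K)=r$. You have simply spelled out in full the bookkeeping that the paper leaves implicit.
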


\begin{proof}[Proof assuming \textup{(\hyperref[item:H2prime]{H2${}^\prime$})}.]
There is a unique nontrivial $\Ftwobar$-character $\chi$ of $\bZ/\ell\bZ$, and so $\rk_2 \Cl(K) = (\ell - 1)r$ if and only if $\rk_\chi \Cl(K) = r$.  The result follows then from the hypothesis ({H2${}^\prime$}).
\end{proof}

\noindent Conjecture \ref{conj:sgnrkcondi} is a theorem for the case $r=0$ (odd class number) by Corollary \ref{cor:1orell}.

In a different direction, we can consider the class of fields where not all modules are self-dual. 
The most common case is expected to be fields with odd class number which by Theorem \ref{thm:oddm10} has $\sgnrk_\chi(\calO_K^\times) = 1 - k_\chi^+(K)$. 
Using Conjecture \ref{mainconj::Tworanks0}  and summing over the contributions we end up with the following binomial distribution.

\begin{conjecture} \label{conj:ellm1fq}
Let $\ell$ be an odd prime, let $f$ be the order of $2$ modulo $\ell$, and suppose that $f$ is odd.  Let $q\colonequals 2^f$ and $m \colonequals {\frac{\ell-1}{2f}} \in \Z_{>0}$.  
Then as $K$ varies over cyclic number fields of degree $\ell$ with \emph{odd} class number, we have
\[ \Prob\Bigl(\sgnrk(\calO_K^\times)=fs+\frac{\ell+1}{2}\Bigr) =  \binom{m}{s} \left ( \frac{q -1 }{q +1} \right )^s \left (   \frac{2 }{q +1} \right )^{m-s} \]
for $0 \leq s \leq m$.
\end{conjecture}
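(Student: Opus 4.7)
The plan is to reduce the computation of $\sgnrk(\calO_K^\times)$ to a sum of contributions indexed by the isotypic components of $\calO_K^\times/(\calO_K^\times)^2$, each of which — under the odd class number hypothesis — is completely determined by the archimedean $\chi$-isotropy ranks $k_\chi^+(K)$. The distribution of the latter is then governed by Conjecture \ref{mainconj::Tworanks0}.

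First I would set up the decomposition of $\F_2[G_K]$ into irreducibles. Since $f$ is odd and $2$ has exact order $f$ modulo $\ell$, no power of $2$ is congruent to $-1 \pmod{\ell}$, so by Lemma \ref{lem:self-duality} \emph{every} nontrivial $\Ftwobar$-character of $G_K$ fails to be self-dual. Combining with Example \ref{ex:prime}, the nontrivial irreducible $\F_2[G_K]$-modules partition into exactly $m = \tfrac{\ell-1}{2f}$ dual pairs $\{\F_2(\chi), \F_2(\chidual)\}$, each of cardinality $q = 2^f$, while the trivial character forms its own self-dual class. Under the odd class number hypothesis, Lemma \ref{lem:cl2} gives $\rho_\chi(K) = 0$ for every $\chi$, so the hypothesis $\rho_\chi(K) = \rho_{\chidual}(K)$ of Conjecture \ref{mainconj::Tworanks0} is automatically met for each dual pair.

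Next I would pin down $\sgnrk_\chi(\calO_K^\times)$ for each $\chi$. For the trivial character, $-1$ generates the trivial-isotypic line inside $\calO_K^\times/(\calO_K^\times)^2$ and has nontrivial image under $\sgn_\infty$, giving $\sgnrk_\triv(\calO_K^\times) = 1$. For each nontrivial $\chi$, Corollary \ref{cor:Wf2gk} constrains $k_\chi^+(K) + k_{\chidual}^+(K) \in \{0,1\}$; when this sum is $0$, Theorem \ref{thm:oddm10}(b) together with $\rho_\chi(K)=\rho_{\chidual}(K)=0$ forces $\sgnrk_\chi(\calO_K^\times) = \sgnrk_{\chidual}(\calO_K^\times) = 1$; when this sum is $1$, Theorem \ref{thm:oddm10}(a) kills whichever of $\sgnrk_\chi,\sgnrk_{\chidual}$ corresponds to $k^+ = 1$, while Theorem \ref{thm:oddm10}(b) sets the other equal to $1$. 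Weighting by $[\F_2(\chi):\F_2] = f$, each dual pair contributes $2f$ or $f$ to $\sgnrk(\calO_K^\times)$ according as $k_\chi^+ + k_{\chidual}^+$ equals $0$ or $1$. If $s$ denotes the number of pairs with $k_\chi^+ + k_{\chidual}^+ = 0$, summing gives
\[
\sgnrk(\calO_K^\times) \;=\; 1 + 2fs + f(m-s) \;=\; 1 + fs + fm \;=\; fs + \tfrac{\ell+1}{2}.
\]

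To finish, I would invoke Conjecture \ref{mainconj::Tworanks0} pair-by-pair: it predicts that $k_\chi^+(K) + k_{\chidual}^+(K) = 0$ with probability $\tfrac{q-1}{q+1}$ and equals $1$ with probability $\tfrac{2}{q+1}$ for each dual pair. Assuming joint independence of these events across the $m$ distinct dual pairs — the natural joint strengthening of (\hyperref[item:H1]{H1}) — the random variable $s$ is binomially distributed with parameters $m$ and $\tfrac{q-1}{q+1}$, which yields the claimed probability. The main obstacle is precisely this independence: (\hyperref[item:H1]{H1}) as stated only controls one isotypic component at a time, and the argument needs independence of the uniformly-random $G_K$-invariant maximal totally isotropic subspaces $S(K)_{\chi^\pm}$ across distinct pairs $\chi^{\pm}$. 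This parallels the standard independence hypothesis across characters in Cohen--Lenstra-type heuristics, but should be stated explicitly as a strengthening of (\hyperref[item:H1]{H1}).
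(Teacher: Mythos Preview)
Your proposal is correct and follows essentially the same route as the paper's own (very brief) justification: the paper notes that under the odd class number hypothesis Theorem~\ref{thm:oddm10} gives $\sgnrk_\chi(\calO_K^\times)=1-k_\chi^+(K)$ for each nontrivial $\chi$, then invokes Conjecture~\ref{mainconj::Tworanks0} pair-by-pair and sums to obtain the binomial distribution. Your explicit flagging of the cross-pair independence assumption is a fair and useful observation---the paper leaves this implicit in ``summing over the contributions.''
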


\subsection{Applications of class group heuristics for cyclic cubic and quintic fields}

The conjectures in the previous section give predictions conditioned on the $2$-rank of the class group.  We next combine our conjectures with predictions for the latter by applying the conjectures of \cite{Malle} correcting the Cohen--Lenstra heuristics for cyclic cubic and quintic fields.   

\smallskip

For $m \in \Z_{\geq 0} \cup \{\infty\}$ and $q \in \R_{>1}$, define $(q)_0 \colonequals 1$ and for nonzero $m$, let
\begin{equation*} 
(q)_m \colonequals \prod_{i=1}^m (1-q^{-i}).
\end{equation*}
For cyclic fields of odd prime degree $\ell$, Cohen--Lenstra \cite{CohenLenstra} made a prediction for the $2$-part of their class groups; in particular, they imply (in the first moment) that the average size of $\Cl(K)[2]$ is equal to $\left(1+2^{-f}\right)^{\frac{\ell-1}{f}},$ where $f$ is the order of $2$ in $(\bZ/\ell\bZ)^\times$.  However, computations by Malle \cite{Malle} suggest that this prediction needs a correction for the fact that the second roots of unity (but not the fourth roots of unity) are contained in any such field.  Malle \cite[(1),(2)]{Malle} goes on to make predictions for the distribution of $\rk_2 \Cl(K)$ as $K$ ranges over cyclic fields of degrees $3$ and $5$.

\begin{conjecture}[Malle] \label{conj:quoteMallequote}
Let $\ell = 3$ or $5$, and let $q=2^{\ell-1}$.  Then as $K$ ranges over cyclic number fields of degree $\ell$, we have
\begin{equation} \label{eqn:problmalle}
\Prob\bigl(\rk_2 \Cl(K) = (\ell-1)r\bigr) = \left(1+\frac{1}{\sqrt{q}}\right) \frac{(\sqrt{q})_\infty (q^2)_\infty}{(q)_\infty^2} \cdot \frac{1}{\sqrt{q}^{r(r+2)}\cdot(q)_{r}}
\end{equation}
for all $r \in \Z_{\geq 0}$. 
\end{conjecture}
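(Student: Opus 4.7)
The statement is a conjecture of Malle refining Cohen--Lenstra; no proof is currently known, so the plan is not a proof but a heuristic derivation of the displayed distribution.

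First, I would set up the random-matrix model. For $K$ cyclic of odd prime degree $\ell$, by Corollary \ref{cor:Kabclk2} every $\F_2[G_K]$-module is self-dual, so Example \ref{ex:prime} identifies $\Cl(K)[2]$ with a free $\F_2(\chi)$-module for the unique non-trivial irreducible character $\chi$ (with $\#\F_2(\chi)=q=2^{\ell-1}$); hence $\rk_2\Cl(K) = (\ell-1)r$ is equivalent to $\rk_\chi \Cl(K) = r$. The Kummer pairing of Lemma \ref{lem::AllKummerPairings} equips $\Cl(K)[2^\infty]$ with an $\F_2[G_K]$-equivariant self-duality, so the natural ensemble is not the general-linear ensemble of random $\Z_2[\chi]$-matrices (as in the original Cohen--Lenstra prediction), but rather a symmetric-matrix ensemble, adjusted for the fact that $\mu_2 \subset K$ while $\mu_4 \not\subset K$.

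Second, I would compute the distribution of the corank under this ensemble. The partition function on symmetric $n \times n$ matrices over the ring of integers $\mathcal{O}$ of the unramified extension of $\Q_2$ of degree $\ell-1$ (so with residue field of size $q$) admits a product expansion into local densities. Taking the limit $n \to \infty$ and tracking corank $r$ yields an expression of the form $C \cdot q^{-r(r+2)/2}/(q)_r$; extracting the normalizing constant $C$ via the requirement that probabilities sum to one (a $q$-series identity of Jacobi triple-product type, which is where the $\sqrt{q}$-infinite products $(\sqrt q)_\infty$ and $(q^2)_\infty$ naturally appear) produces exactly the prefactor $(1+q^{-1/2})(\sqrt{q})_\infty (q^2)_\infty / (q)_\infty^2$, reproducing \eqref{eqn:problmalle}. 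The factor $(1+q^{-1/2})$ is precisely the $\mu_2$-correction that Malle identified as missing from the naive prediction.

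The main obstacle is the transition from the heuristic to a theorem: establishing that class groups of cyclic cubic or quintic fields are actually sampled by this symmetric ensemble is well out of reach of current technology. First moments of $\#\Cl(K)[2]$ in the cubic case can be approached by the geometry-of-numbers methods of Bhargava and collaborators, but controlling the entire distribution would require a substantial new idea — particularly because the correction factor encoding the $\mu_2$-versus-$\mu_4$ dichotomy has no direct arithmetic proxy within the known parametrizations of cubic or quintic fields. For this reason the authors (appropriately) quote the statement as a conjecture and pass it as input into their own heuristic framework rather than attempting to establish it.
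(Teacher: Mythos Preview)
Your assessment is correct: the paper offers no proof or derivation of this statement. Conjecture~\ref{conj:quoteMallequote} is simply quoted from Malle \cite[(1),(2)]{Malle} as external input, and the authors immediately pass it into Conjecture~\ref{conj:ell35sgkr0} without further justification beyond the remark that $\rk_2\Cl(K)=\rk_2\Cl^+(K)$ by Corollary~\ref{cor:Kabclk2}. So on the comparison question there is nothing to compare: you and the paper agree that this is an unproven heuristic.

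Your sketch of a derivation goes well beyond what the paper contains. A couple of small inaccuracies are worth flagging. First, Lemma~\ref{lem::AllKummerPairings} does not give a self-pairing on $\Cl(K)[2^\infty]$; it pairs $\Cl(K)/\Cl(K)^2$ with $\ker(\varphi_K)$, which is a different space. The self-duality relevant to Malle's symmetric-matrix ensemble is rather the statement that every irreducible $\F_2[G_K]$-module is self-dual when $2$ is a primitive root mod~$\ell$ (Lemma~\ref{lem:self-duality}), together with the reflection-type symmetry discussed in Adam--Malle~\cite{AdamMalle}; you have conflated these. Second, calling $\Cl(K)[2]$ a ``free $\F_2(\chi)$-module'' is slightly off: it is a direct sum of copies of the unique nontrivial irreducible $\F_2(\chi)$ (by Lemma~\ref{lem:cl2} the trivial component vanishes), which is what you need, but ``free'' over $\F_2(\chi)$ as a ring is the same thing here only because $\F_2(\chi)$ is a field. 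These are cosmetic; your overall picture of the random-matrix origin of \eqref{eqn:problmalle} and of the $\mu_2$-correction is the right one, and your concluding paragraph accurately describes why a proof is out of reach.
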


Note that under the hypotheses of Conjecture \ref{conj:quoteMallequote}, we have $\rk_2 \Cl(K) = \rk_2 \Cl^+(K)$ by Corollary \ref{cor:Kabclk2}, hence the left-hand side of \eqref{eqn:problmalle} is equal to $\Prob\bigl(\rk_2 \Cl^+(K) = (\ell-1)r\bigr)$. (For a discussion about class group heuristics for cyclic fields of prime degree $\ell \geq 7$, see Remark \ref{rem:malle}.) 

Combining Conjecture \ref{conj:quoteMallequote} with Conjecture \ref{conj:sgnrkcondi} and summing gives the following:

\begin{conjecture} \label{conj:ell35sgkr0}
 As $K$ varies over cyclic number fields of degree $\ell = 3$ or $5$, we predict
\begin{equation*}
    \begin{aligned}
\Prob\bigl(  \sgnrk(\calO_K^\times) = 1  \bigr)  &= \left(1+\frac{1}{\sqrt{q}}\right) \cdot \frac{(\sqrt{q})_\infty (q^2)_\infty}{(q)_\infty^2} \cdot \sum_{r = 0}^\infty \frac{1}{\sqrt{q}^{r(r+2)} \cdot(q)_r} \cdot \frac{q^{r}-1}{q^{r+1}-1},
\end{aligned}
\end{equation*}
where $q = 2^{\ell-1}$. \end{conjecture}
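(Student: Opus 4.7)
The plan is to derive Conjecture~\ref{conj:ell35sgkr0} as a formal consequence of Malle's Conjecture~\ref{conj:quoteMallequote} and Conjecture~\ref{conj:sgnrkcondi} via the law of total probability. First, I would observe that for $\ell = 3$ and $\ell = 5$, $2$ is a primitive root modulo $\ell$: its order $f$ equals $\ell - 1$, so $q \colonequals 2^{\ell - 1} = 2^f$ is the common value used in both source conjectures. By Example~\ref{ex:prime}, there is a unique nontrivial $\Ftwobar$-character $\chi$ of $G_K = \Z/\ell\Z$ up to isomorphism, and $\F_2(\chi)$ has $\F_2$-dimension $\ell - 1$; hence $\rk_2 \Cl(K) = (\ell - 1)r$ if and only if $\rk_\chi \Cl(K) = r$. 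Moreover, by Corollary~\ref{cor:Kcycsgrkyup}(b), $\sgnrk(\calO_K^\times) \in \{1, \ell\}$ always, so $\Prob(\sgnrk(\calO_K^\times) = 1)$ together with its complement recovers the full distribution.

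Next, I would partition the family according to the value of $\rk_2 \Cl(K)$ and apply total probability to write
\begin{equation*}
\Prob(\sgnrk(\calO_K^\times) = 1) = \sum_{r = 0}^\infty \Prob(\rk_2 \Cl(K) = (\ell-1)r) \cdot \Prob\bigl(\sgnrk(\calO_K^\times) = 1 \bigm| \rk_2 \Cl(K) = (\ell-1)r\bigr).
\end{equation*}
I would then substitute Malle's predicted value
\begin{equation*}
\left(1 + \tfrac{1}{\sqrt{q}}\right) \tfrac{(\sqrt{q})_\infty (q^2)_\infty}{(q)_\infty^2} \cdot \tfrac{1}{\sqrt{q}^{r(r+2)} (q)_r}
\end{equation*}
for the first factor and the value $(q^r - 1)/(q^{r+1} - 1)$ from Conjecture~\ref{conj:sgnrkcondi} for the second factor, pulling the $r$-independent terms out of the sum. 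To match the exact form displayed in \eqref{eqn:alignedsgnrkE}, I would use the identity $\ell - 1 = \sqrt{q}$ (valid for $\ell = 3, 5$, since $q = 4, 16$) to rewrite $1/\sqrt{q}^{r(r+2)} = (\ell - 1)^r / q^{(r^2 + 3r)/2}$, giving the stated expression.

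The main subtlety, which is a conceptual compatibility check rather than a computational obstacle, is justifying that the conditional probability from Conjecture~\ref{conj:sgnrkcondi} may be combined multiplicatively with Malle's unconditional prediction. Conjecture~\ref{conj:sgnrkcondi} rests on assumption (\hyperref[item:H2prime]{H2${}^\prime$}), which models the position of $\calO_K^\times/(\calO_K^\times)^2$ inside $\Sel_2(K)_\chi$ as a uniform random line, and is posed for \emph{all} cyclic $\ell$-number fields (or, by stability of the heuristic under natural conditioning, for the subfamily with prescribed $\rk_\chi \Cl(K)$). Malle's conjecture, on the other hand, concerns only the distribution of $\Cl(K)[2]$ as a $G_K$-module. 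The heuristic independence between these two structures is implicit in our framework: the subspace $\calO_K^\times/(\calO_K^\times)^2 \subseteq \Sel_2(K)$ and the kernel $\ker(\varphi_{K,\infty})$ (whose $\chi$-rank controls $\rk_\chi \Cl^+(K)$ via Lemma~\ref{lem::AllKummerPairings}) are modelled independently. Granting this, the conjecture follows by a purely formal manipulation.
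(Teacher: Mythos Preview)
Your derivation is correct and matches the paper's approach exactly: the paper states only that ``combining Conjecture~\ref{conj:quoteMallequote} with Conjecture~\ref{conj:sgnrkcondi} and summing gives the following,'' and you have supplied precisely that computation via the law of total probability. Your additional observation that $\ell-1=\sqrt{q}$ reconciles the form stated here with the equivalent expression \eqref{eqn:alignedsgnrkE} in the introduction, a check the paper leaves implicit.
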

Approximate numerical values of these probabilities are given in the following table: 
\begin{center}
\begin{tabular}{c|cc}
& $\ell = 3$ & $\ell = 5$  \\ 
\hline\vspace{-10pt}\\
$\sgnrk(\calO_K^\times) = 1$ & $0.029573$  &  $0.000965$  \\\vspace{-10pt}\\ 
$\sgnrk(\calO_K^\times) = \ell$ & $0.970427$ & $0.999035$ \\
\end{tabular}
\end{center}
\subsection{Summary of results in small degree} \label{sec:subsumm}

We now summarize the results and conjectures for the case $\ell=3$, $5$, and $7$.

\subsubsection*{Cyclic cubic fields} 

We begin with the case $G=\Z/3\Z$ and $\ell=3$.  Here, $2$ is a primitive root, and so there is a unique nontrivial irreducible $\F_2[G]$-module with $\F_2$-dimension $\ell-1 = 2$ implying that $\rk_2 \Cl(K)$ is always even. 
Malle \cite[(1)]{Malle} (as in Conjecture \ref{conj:quoteMallequote}) predicts
\[ \Prob\bigl(\rk_2 \Cl(K) = 0,2,4\bigr) \approx 85.30\%, 14.21\%, 0.47\%; \]
the remaining cyclic cubic fields (having $\rk_2 \Cl(K) \geq 6$) conjecturally comprise less than $0.004\%$ of all cyclic cubic fields.  By Corollary \ref{cor:Kabclk2}, we have $\Cl(K)[2] \simeq \Cl^+(K)[2]$.  

In this case, Conjecture \ref{conj:sgnrkcondi} predicts $\Prob\bigl(\sgnrk(\calO_K^\times) = s \,|\, \rk_2 \Cl(K) =\vrho\bigr)$ according to the following table: 
\vspace{-5pt}
\begin{center}
\begin{tabular}{c|ccc}
& $\vrho =0$ & $\vrho=2$ & $\vrho=4$  \\ 
\hline\vspace{-10pt}\\
$s=1$ & $0$ & $\frac{1}{5}$ & $\frac{5}{21}$ \\\vspace{-10pt}\\ 
$s=3$ & $1$ & $\frac{4}{5}$ & $\frac{16}{21}$\vspace{5pt}\\
\end{tabular}
\end{center}
For example, amongst cyclic cubic fields with $\rk_2 \Cl(K)=4$, we predict $\frac{16}{21}$ will have units of mixed signature.  Combining these first three values for the $2$-ranks with the associated conditional probabilities for $\sgnrk(\calO_K^\times)$ yields
$$\Prob\bigl(  \sgnrk(\calO_K^\times) = 3  \bigr) \approx   1\cdot85.30\% + \frac{4}{5}\cdot14.21\% +  \frac{16}{21}\cdot0.47\%  \approx 97.03\%.  $$
Conjecture \ref{conj:ell35sgkr0} then implies: as $K$ varies over cyclic cubic fields, the unit signature rank is equal to $1$ approximately $3\%$ of the time, and the unit signature rank is equal to $3$ approximately $97\%$ of the time.

\subsubsection*{Cyclic quintic fields} 

When $G = \bZ/5\bZ$, we again have that $2$ is a primitive root modulo $5$, so there is a unique irreducible nontrivial irreducible $\F_2[G_K]$-module of dimension $4$.   Malle \cite[(2)]{Malle} predicts 
\[ \Prob\bigl(\rk_2\Cl(K)=0,4,8\bigr) \approx 98.359\%, 1.639\%, 0.002\% \]
and $\Prob\bigl(\rk_2\Cl(K) \geq 8\bigr) \leq 0.02\%$.  Again, by Corollary \ref{cor:Kabclk2} we have $\Cl(K)[2] \simeq \Cl^+(K)[2]$.  Here, Conjecture \ref{conj:sgnrkcondi} predicts $\Prob\bigl(\sgnrk(\calO_K^\times) = s \,|\, \rk_2\Cl(K)=\vrho\bigr)$ as:
\begin{center}
\begin{tabular}{c|ccc}
 & $\vrho=0$ & $\vrho=4$ & $\vrho=8$  \\
\hline \vspace{-10pt}
\\ $s=1$ & $0$ & $\frac{1}{17}$ & $\frac{17}{273}$ \\\vspace{-10pt}\\
$s=5$ & $1$ & $\frac{16}{17} $ &  $\frac{256}{273}$ \\
\end{tabular}
\end{center}
Summing as above yields
$$ \Prob\bigl(  \sgnrk(\calO_K^\times) = 5  \bigr) \approx   1\cdot98.35\% + \frac{16}{17} \cdot 1.63\% +  \frac{256}{273}\cdot0.002\% \approx 99.90\%,  $$
and so Conjecture \ref{conj:ell35sgkr0} predicts that $99.9\%$ of cyclic quintic fields have units of all possible signatures, and indeed they are abundant.  For $\sgnrk(\calO_K^\times) = 1$, we find the cyclic quintic field $K = \Q(\alpha)$ of conductor $39821$ and discriminant $39821^4$, where $\alpha$ is a root of the polynomial 
$$ x^5 + x^4 - 15928x^3 - 218219x^2 + 20800579x + 363483463. $$

\subsubsection*{Cyclic septic fields}

We now consider the case $G=\Z/7\Z$.  Since $2$ has order $3$ modulo $7$ and $-1 \not \in \langle 2 \rangle \leq (\Z/7\Z)^\times$, there are precisely two nontrivial irreducibles $\F_2(\chi) \not\simeq \F_2(\chidual)$ and $\#\F_2(\chi)=\#\F_2(\chidual)=2^{3}=8$.  We refer to the cases in Table \ref{table:cases0}; case \textsf{A} does not occur because $\chi$ is not self-dual.  We have $\rk_2 \Cl(K)=\rk_2 \Cl(K)_{\chi^{\pm}} = 3(\rho_\chi(K)+\rho_{\chidual}(K))$.
\begin{itemize}
\item Cases \textsf{B}, \textsf{B}${}^\prime$ are exactly those where $\rho_\chi \neq \rho_{\chidual}$, i.e., $\Cl(K)[2]$ is not self-dual, in which case $\rho_\chi-\rho_{\chidual}=\pm 1$.  In these cases, $k^+(K)=3(k_\chi^+(K)+k_{\chidual}^+(K))=3$, i.e., $\Cl^+(K)[2] = \Cl(K)[2] \oplus (\Z/2\Z)^3$.
\item The remaining cases \textsf{C}, \textsf{C}${}^\prime$, and \textsf{D} are those where $\Cl(K)[2]$ is self-dual.  For such fields, we have $k^+(K)=0$ (in case \textsf{D}) or $k^+(K)=3$ (in cases \textsf{C} or \textsf{C}${}^\prime$), and Conjecture \ref{mainconj::Tworanks0} predicts that 
\begin{eqnarray*} \Prob\bigl(k^+(K)= 3\bigr) & = & \textstyle{\frac{2}{9}}, \\
\Prob\bigl(k^+(K)= 0\bigr) & = & \textstyle{\frac{7}{9}}.
\end{eqnarray*} 
\end{itemize}
In particular, $\Cl(K)[2]$ is self-dual if and only if $\rk_2 \Cl(K)$ is even.

\begin{example} \label{exm::cases7}
We now provide examples of the above three cases for cyclic septic number fields. For each case let $K = \Q(\alpha)$ where $\alpha$ is a root of the polynomial $f(x)$.
\begin{itemize}
    \item For the field with LMFDB label \href{http://www.lmfdb.org/NumberField/7.7.14011639427134441.1}{\textsf{7.7.14011639427134441.1}} of discriminant $491^6$ defined by $f(x)=x^7 - x^6 - 210x^5 - 1423x^4 - 1410x^3 + 8538x^2 + 9203x - 19427$, we have $\Cl(K)[2] = (\Z/2\Z)^3$ and $\Cl^+(K)[2] = (\Z/2\Z)^6$ so that $\Cl(K)[2]$ is not self dual, is an example of case \textsf{B}/\textsf{B}${}^\prime$, and $k^+(K) = 3$. 
     \item For \href{https://www.lmfdb.org/NumberField/7.7.594823321.1}{\textsf{7.7.594823321.1}} of discriminant $29^6$ defined by $f(x) = x^7 - x^6 - 12x^5 + 7x^4 + 28x^3 - 14x^2 - 9x - 1$, we have $\Cl(K)[2] = 1$ and $\Cl^+(K)[2] = (\Z/2\Z)^3$ so that $\Cl(K)[2]$ is self dual, is an example of case \textsf{C}/\textsf{C}${}^\prime$, and $k^+(K) = 3$.
     \item For \href{http://www.lmfdb.org/NumberField/7.7.6321363049.1}{\textsf{7.7.6321363049.1}} of discriminant $43^6$ defined by $f(x)=x^7 - x^6 - 18x^5 + 35x^4 + 38x^3 - 104x^2 + 7x + 49$, we have $\Cl(K)[2] = \Cl^+(K)[2] = 1$ so that $\Cl(K)[2]$ is self dual, is an example of case \textsf{D}, and $k^+(K) = 0$.
     \end{itemize}
\end{example}

For unit signature ranks, using the formulas in Conjectures \ref{conj::ConjUnitSignatureRanks} and \ref{conj:ellm1fq} we make the following predictions for class groups of cyclic septic fields with low 2-rank.  

\begin{itemize}
\item Suppose $\rk_2\Cl(K)=0$. Conjecture \ref{conj:ellm1fq} then implies:
\begin{eqnarray*} 
\Prob\bigl(  \sgnrk(\calO_K^\times) = 4 \,|\, \rk_2 \Cl(K) =0 \bigr)  &=& \textstyle{\frac{2}{9}};\\\Prob\bigl(  \sgnrk(\calO_K^\times) = 7 \,|\, \rk_2 \Cl(K)=0 \bigr)  &=& \textstyle{\frac{7}{9}}.
\end{eqnarray*}
\item Suppose $\rk_2\Cl(K)=3$.  Without loss of generality, assume $\rho_{\chi}(K) = 1$ and $\rho_{\chidual}(K) =0$.  By Theorems  \ref{mainthm::Representationtheory}(b)(i) and \ref{thm:oddm10}(a), we have $\sgnrk_{\chidual}(\calO_K^\times) = 0$. Using Conjecture \ref{conj::ConjUnitSignatureRanks} with $\rho_{\chi}(K) =1$, we predict that $\sgnrk_{\chi}(\calO_K^\times) = 0$ occurs with probability $\frac{7}{63}$, so
\begin{eqnarray*}
\Prob\bigl(  \sgnrk(\calO_K^\times) = 1  \,|\, \rk_2\Cl(K)=3 \bigr)  &=& \textstyle{\frac{1}{9}};\\
\Prob\bigl(  \sgnrk(\calO_K^\times) = 4  \,|\, \rk_2\Cl(K)=3 \bigr)  &=& \textstyle{\frac{8}{9}}.
\end{eqnarray*}
\end{itemize}

\section{Computations}  \label{sec:calcs}
In this section, we present computations that provide evidence to support our conjectures.  To avoid redundancy, instead of working with families of $G$-number fields (which weights each isomorphism class of a field $K$ by $\#\Aut(G_K)$), we weight each isomorphism class of number fields by $1$.  (Either weighting evidently gives the same probabilities and moments.)

We begin by describing a method for computing a random cyclic number field of odd prime degree $\ell$ of conductor $\leq X$.  Recall (by the Kronecker--Weber theorem) that $f \in \Z_{\geq 0}$ arises as a conductor for such a field if and only if $f=f'$ or $\ell^2 f'$ where $f'$ is a squarefree product of primes $p \equiv 1 \pmod{\ell}$.  Moreover, the number of such fields is equal to $(\ell-1)^{\omega(f)-2}$ if $\omega(f) \geq 2$ and $\ell \mid f$, otherwise the number is $(\ell-1)^{\omega(f)-1}$.  Our algorithm generates a random factored integer $f \leq X$ of this form and a uniform random character with given conductor; then, it constructs the corresponding field by computing an associated Gaussian period.

\subsection{Cubic fields} \label{sec:cubfields}

We sampled cyclic cubic fields in this manner, performing our computations in \Magma\ \cite{Magma}; the total computing time was a few CPU days.  The class group and narrow class group computations are conjectural on the Generalized Riemann Hypothesis (GRH).  Our code generating this data is available online \cite{ourlist}. 

Let $\cN_3(X)$ denote the set of sampled cyclic cubic fields $K$ (having $\Cond(K) \leq X$), and let $\cN_3(X, \rho = r) \subseteq \cN_3(X)$ denote the subset of fields $K$ with $\rk_2 \Cl(K) = r$. For each of $X = 10^{5}$, $10^{6}$, and $10^{7}$, we sampled $\#\cN_3(X) = 10^4$ fields. Note that the asymptotic number of cyclic cubic fields with conductor bounded by $X$ is $c_3 \cdot X$ where $c_3 \approx 0.159$ \cite{cohn} (see also \cite[Corollary 4.7]{CDOcyclic}). We remark that in the below tables, $N =$ sample size, $\pm1/\sqrt{N}$ indicates the confidence interval, and when the prediction is a theorem, we indicate it in bold.
 
\begin{equation} \label{table:deg3data}\addtocounter{equation}{1} \notag
\begin{gathered}
\begin{tabular}{c|c|c|c|c|c}
{Family} & {Property}
 & \multicolumn{3}{c|}{{Proportion of family satisfying property}} &
{Prediction} \\
\hline\hline
 \multicolumn{2}{c|}{\phantom{$X^{X^X}$}} & \multicolumn{1}{c|}{\phantom{x.}$X = 10^{5}$\phantom{x.}} & \multicolumn{1}{c|}{\phantom{x.}$X =10^{6}$\phantom{x.}} & \multicolumn{1}{c|}{$X = 10^{7}$} & \multicolumn{1}{c}{}\\
\hline\hline
\multirow{3}{*}{\shortstack{\\ \\ \\ \\ \\ $\cN_3(X)$ \\ \\ \\ \scalebox{0.70}{$1/\sqrt{N} = .01$}}}
& $\rk_2 \Cl(K)=0$ & 0.873 & 0.871 & 0.867 &  0.853 \phantom{\rule[0pt]{0pt}{15pt}} \\
& $\rk_2 \Cl(K)=2$ & 0.127 & 0.129 & 0.133 & 0.142 \phantom{\rule[0pt]{0pt}{15pt}}\\
& $\rk_2 \Cl(K)\geq 4$ & 0.001 & 0.001 & 0.001 & 0.005 \phantom{\rule[0pt]{0pt}{15pt}}\\ 
\hline
\multirow{2}{*}{\shortstack{ \\ $\cN_3(X)$ \\ \\ \scalebox{0.70}{$1/\sqrt{N} = .01$}}}
&$\sgnrk(\calO_K^\times)=1$ & 0.023 & 0.024 & 0.026 & 0.030 \phantom{\rule[0pt]{0pt}{15pt}}  \\
& $\sgnrk(\calO_K^\times)=3$ & 0.977 & 0.976 & 0.974 & 0.970 \phantom{\rule[0pt]{0pt}{15pt}}\\ 
\hline
\multirow{2}{*}{\shortstack{\\ $\cN_3(X,\rho = 0)$ \\ \\ \scalebox{0.70}{${1}/{\sqrt{N}} \approx .11$}}}
& $\sgnrk(\calO_K^\times) =1$ & 0.000 & 0.000 & 0.000 &  {\bf 0} \phantom{\rule[0pt]{0pt}{15pt}}  \\
& $\sgnrk(\calO_K^\times) =3$ & 1.000 & 1.000 & 1.000 & {\bf 1} \phantom{\rule[0pt]{0pt}{15pt}}\\
\hline
\multirow{2}{*}{\shortstack{\\ \\ $\cN_3(X,\rho = 2)$ \\ \\ \scalebox{0.70}{${1}/{\sqrt{N}} \approx .27\hbox{-}.28$}}}
& $\sgnrk(\calO_K^\times) =1$ & 0.177 & 0.185 & 0.189 & $ 0.200 = \frac{1}{5}$\phantom{\rule[0pt]{0pt}{15pt}} \\
& $\sgnrk(\calO_K^\times) =3$ & 0.823 & 0.814 & 0.811 & $0.800 = \frac{4}{5}$\phantom{\rule[0pt]{0pt}{15pt}} \\
\end{tabular} \\
\text{Table \ref{table:deg3data}: Data for class group and signature ranks of sampled cyclic cubic fields}
\end{gathered}
\end{equation} 
\begin{equation} \label{table:deg3momentdata}\addtocounter{equation}{1} \notag
\begin{gathered}
\begin{tabular}{c|c|c|c|c|c}
{Family} & {Moment}
 & \multicolumn{3}{c|}{{Average}} &
{Prediction} \\
\hline\hline
 \multicolumn{2}{c|}{\phantom{$X^{X^X}$}} & \multicolumn{1}{c|}{$X = 10^{5}$} & \multicolumn{1}{c|}{$X =10^{6}$} & \multicolumn{1}{c|}{$X = 10^{7}$} & \multicolumn{1}{c}{}\\
\hline\hline
\multirow{3}{*}{\shortstack{\\ \\ \\ \\ \\ $\cN_3(X)$ \\ \\ \\ \scalebox{0.70}{$1/\sqrt{N} = .01$}}}
& $\#\Cl(K)[2]$ & 1.404 & 1.434 & 1.467 &  $1.500 = \frac{3}{2}$ \phantom{\rule[0pt]{0pt}{15pt}} \\
& $\left(\#\Cl(K)[2]\right)^2$ & 3.268 & 3.702 & 4.079 & $4.500 = \frac{9}{2}$ \phantom{\rule[0pt]{0pt}{15pt}}\\
& $\left(\#\Cl(K)[2]\right)^3$ & 14.76 & 21.44 & 26.67 & $40.50 = \frac{81}{2}$ \phantom{\rule[0pt]{0pt}{15pt}}\\ 
\end{tabular} \\
\text{Table \ref{table:deg3momentdata}: Data for moments of (narrow) class groups of sampled cyclic cubic fields}
\end{gathered}
\end{equation}

\subsection{Septic fields} \label{sec:septiccomp}

We now turn to computations for cyclic extensions of degree seven. The complexity of the fields grew so quickly that it was infeasible to sample fields. Instead we computed the first 8000 cyclic degree seven fields ordered by conductor. This list is available online \cite{ourlist}, and we confirmed our results against independent computations of Hofmann \cite{TommyHofmann}. 

\smallskip

Let $\cN_7(X)$ denote the set of septic cyclic fields with $\Cond(K) < X$, and let $\cN_7(X,\rho = r) \subseteq \cN_7(X)$ denote the subset of fields $K$ satisfying $\rk_2 \Cl(K) = r$.  Asymptotically, we have $\cN_7(X) \sim c_7 \cdot X$ where $c_7 \approx 0.033$ by \cite[Corollary 4.7]{CDOcyclic}. The first 8000 cyclic septic fields corresponds to the set $\cN_7(X_0)$ where $X_0 = 244861$. In addition, we have $\#\cN_7(X_0, \rho = 0) = 7739$, $\#\cN_7(X_0, \rho = 3) = 241$, and $\#\cN_7(X_0, \rho = 6) = 20$. For all other $r \in \bZ_{\geq 0}$, we have $\#\cN_7(X_0, \rho = r) = 0$. Because the sample size was so small, in Table \ref{table:deg7data} below we do not compute statistics for the subset $\cN_7(X_0,\rho = 6)$. The first few fields in $\cN_7(X_0,\rho=6)$ are generated by the roots of the polynomials:
 \begin{equation*}
     \begin{aligned}
&  x^7 - 1491x^5 + 29323x^4 - 118783x^3 - 662004x^2 + 1844864x - 899641,\\
 & x^7 + x^6 - 3360x^5 + 54087x^4 + 1523280x^3 - 24904626x^2 - 194909041x + 2439485891, \\
& x^7 - x^6 - 8274x^5 - 249021x^4 + 3000578x^3 + 60235500x^2 + 152710207x + 67428091,
 \\
 & x^7 - x^6 - 14340x^5 + 328464x^4 + 46377824x^3 - 1467892080x^2 - 11615446400x + 118681888000, \\
 & x^7 - 18543x^5 + 154525x^4 + 67057669x^3 - 1368522848x^2 - 26253624432x + 269027889901.
     \end{aligned}
     \end{equation*}

These computations took approximately 1 CPU day. Further computations quickly run into the difficulty of computing class groups of fields with large discriminants. When the prediction is a theorem, we indicate it in bold. In addition, the class group and narrow class group computations remain conjectural on GRH.  Our code is available online \cite{ourlist}.
 
\begin{equation} \label{table:deg7data}\addtocounter{equation}{1} \notag
\begin{gathered}
\begin{tabular}{c|c|c|c}
\multirow{2}{*}{Family} 
& \multirow{2}{*}{Property} & {{Proportion of family}}& \multirow{2}{*}{{Prediction}}  \\
& & {{satisfying property}} & \vspace{1pt}\\
\hline \hline 
\multicolumn{2}{c}{\phantom{$X^{X^X}$}} & \multicolumn{1}{c}{$X \approx 244861$} & \multicolumn{1}{c}{}    \\
\hline\hline
\multirow{3}{*}{\shortstack{\\ \\  \\ \\ \\ $\cN_7(X)$ \\ \\ \\ \scalebox{0.7}{$\# = 8000$}}} 
& $\rk_2 \Cl(K) = 0$ &  0.967 & ? \phantom{\rule[0pt]{0pt}{15pt}} \\
& $\rk_2 \Cl(K) = 3$ &  0.030 & ? \phantom{\rule[0pt]{0pt}{15pt}}\\
& $\rk_2 \Cl(K) \geq 6$ & 0.003 & ? \phantom{\rule[0pt]{0pt}{15pt}}\\
\hline
\multirow{3}{*}{\shortstack{ \\ $\cN_7(X,\rho = 0)$ \\ \\ \scalebox{0.7}{$\# = 7739$}}}
& $\rk_2 \Cl^+(K) = 0$ &  0.772  & $ 0.778 = \frac79$\phantom{\rule[0pt]{0pt}{15pt}} \\
& $\rk_2 \Cl^+(K) = 3$ &  0.228  & $ 0.222 = \frac29$\phantom{\rule[0pt]{0pt}{15pt}}\\
&&&\vspace{-12pt}\\
\hline
\multirow{3}{*}{\shortstack{ \\ $\cN_7(X,\rho = 0)$ \\ \\ \scalebox{0.7}{$\# = 7739$}}}
& $\sgnrk(\cO_K^\times) = 4$ &  0.228 & $ 0.222 = \frac29$\phantom{\rule[0pt]{0pt}{15pt}} \\
& $\sgnrk(\cO_K^\times) =7$ &  0.772 & $ 0.778 = \frac79$\phantom{\rule[0pt]{0pt}{15pt}} \\
&&&\vspace{-12pt}\\
\hline
\multirow{2}{*}{\shortstack{ \\ $\cN_7(X,\rho = 3)$ \\ \\ \scalebox{0.7}{$\# = 241$}}}
& $\rk_2 \Cl^+(K) = 3$ &  0.00  &  {\bf 0}\phantom{\rule[0pt]{0pt}{15pt}} \\
& $\rk_2 \Cl^+(K) = 6$ &  1.00  & {\bf 1}\phantom{\rule[0pt]{0pt}{15pt}} \\
\hline
\multirow{3}{*}{\shortstack{ \\ \\ \\ \\ \\ $\cN_7(X,\rho = 3)$ \\ \\ \\ \scalebox{0.7}{$\# = 241$}}}
& $\sgnrk(\cO_K^\times) = 1$ &  0.083 & $ 0.111 = \frac{1}{9}$\phantom{\rule[0pt]{0pt}{15pt}} \\
& $\sgnrk(\cO_K^\times) =4$ &  0.917 &  $ 0.889 = \frac{8}{9}$\phantom{\rule[0pt]{0pt}{15pt}}\\
& $\sgnrk(\cO_K^\times) =7$ &  0.000 & $ {\bf 0}$  \phantom{\rule[0pt]{0pt}{15pt}}\\
\end{tabular} \\
\text{Table \ref{table:deg7data}: Data for class group and signature ranks of the first 8000 cyclic septic fields}
\end{gathered}
\end{equation}
There are two non-trivial characters $\chi$ and $\chi^*$ for the Galois group when $G_K \simeq \Z/7\Z$. In light of Corollary \ref{cor:thmKgalcor_reflect}, one may wonder how often the inequalities 
$|\rk_\chi \Cl(K) - \rk_{\chidual} \Cl(K)| \leq 1$ and
$|\rk_\chi \Cl^+(K) - \rk_{\chidual} \Cl^+(K)| \leq 1$ 
are equalities, i.e., how often the class group or the narrow class group is {\em not} self-dual. As previously mentioned, the 2-torsion subgroup of the class group is self-dual precisely when the exponent $n$ satisfying $\rk_2 \Cl(K) = 3^n$ is \emph{even}. In particular, for the first 8000 cyclic septic fields we found that a proportion of $0.970$ have $\Cl(K)[2]$ self-dual. 

\begin{equation} \label{table:clgroup}\addtocounter{equation}{1} \notag
\begin{gathered}
\begin{tabular}{c|c|c|c} 
{Family}  & {{Moment}} & {{Average}}  & {{Prediction}} \\ 
\hline\hline
\multicolumn{2}{c}{\phantom{$X^{X^X}$}} & \multicolumn{1}{c}{$X \approx 244861$} & \multicolumn{1}{c}{} \\
\hline\hline
\multirow{3}{*}{ \shortstack{\\ \\ \\ \\ \\ $\cN_7(X)$ \\ \\ \\ \scalebox{0.7}{$1/\sqrt{N} = 0.11$}}} &
 $\#\Cl(K)[2]$ &  1.368 & $1.375 = \frac{11}{8}?$\phantom{\rule[0pt]{0pt}{15pt}} \\
 &  $\left(\#\Cl(K)[2]\right)^2$ &  13.13 & ? \phantom{\rule[0pt]{0pt}{15pt}}\\
  & $\left(\#\Cl(K)[2]\right)^3$ & 671.7 & ? \phantom{\rule[0pt]{0pt}{15pt}}\\ 
\hline
\multirow{3}{*}{ \shortstack{\\ \\ \\ \\ \\ $\cN_7(X)$ \\ \\ \\ \scalebox{0.7}{$1/\sqrt{N} = 0.11$}}} &
 $\#\Cl^+(K)[2]$ & 4.823 & ?
 \phantom{\rule[0pt]{0pt}{15pt}} \\
 &  $\left(\#\Cl^+(K)[2]\right)^2$ &  277.5 & ? \phantom{\rule[0pt]{0pt}{15pt}}\\
  & $\left(\#\Cl^+(K)[2]\right)^3$ & 75643.8 & ? \phantom{\rule[0pt]{0pt}{15pt}}\\ 
\end{tabular} \\
\text{Table \ref{table:clgroup}: Data for moments of class groups of the first 8000 cyclic septic fields}
\end{gathered}
\end{equation}

For the 2-torsion subgroup of the narrow class group, there are two possibilities: if $\Cl(K)[2]$ is not self-dual, then $\Cl^+(K)[2]$ is self-dual; and if $\Cl(K)[2]$ is self-dual we conjecture that $\Cl^+(K)[2]$ is also self-dual with probability $7/9=0.778$. Our conjecture implies that the proportion of cyclic septic fields with $\Cl^+(K)[2]$ self-dual is at least $0.778$. This data suggests that  it may be much more likely for class groups and narrow class groups to be self-dual.

\begin{remark}\label{rem:malle}
As of yet, no corrected predictions taking into account the existence of the $2$nd (but not $4$th) roots of unity in the base field have been made on the distribution of the $2$-ranks of class groups of cyclic septic fields over $\bQ$ (or more generally, of degree $\ell$ cyclic fields for any (fixed) prime $\ell \geq 7$ over $\bQ$). The original distribution in Cohen--Lenstra \cite{CohenLenstra} predicts the average size of $\Cl(K)[2]$ when $K$ varies over cyclic septic fields to be $\frac{81}{64} \approx 1.266$. However, our computations for $\ell=7$ (see Table \ref{table:clgroup}) weakly suggest that the average size of $\Cl(K)[2]$ for cyclic septic fields is $\frac{11}{8}$.

In fact, we expect that the distribution of the moments for the 2-torsion subgroups in class groups of cyclic fields of prime degree $\ell$ to be quite different when the order of $2$ modulo $\ell$ is even than when the order is odd. For example, the distribution given in Conjecture \ref{conj:quoteMallequote} implies that the average size of $\Cl(K)[2]$ is $1+2^{-f/2}$ for $\ell = 3$ or $5$ and $f$ denotes the order of $2$ modulo $\ell$. For $\ell = 7$, this computes to approximately $1.354$, which seems to diverge from Table \ref{table:clgroup}.
\end{remark}
  
\appendix

\section{Cyclic cubic fields with signature rank 1 (with Noam Elkies)} \label{sec:infincub}

In this appendix, we use Diophantine methods to construct infinite families of cyclic cubic fields with no units of mixed sign (unit signature rank $1$). 

\subsection{Setup}  

Start with a generic polynomial of the form 
\begin{equation*} \label{Equation::UnitPolynomial}
f_{a,b}(x)=f(x) \colonequals x^3 - ax^2 + bx -1, \quad \quad (a,b) \in \Z^2,
\end{equation*}
with constant coefficient $-1$; a root of $f(x)$ is a unit in $\Z[x]/(f(x))$.   
By the rational root test, the polynomial $f(x)$ is reducible over $\Q$
if and only if $b=a$ or $b=-a-2$.
When $f(x)$ is irreducible, let $K \colonequals \Q[x]/(f(x))$.  To ensure that $K$ is a cyclic cubic field, we need the discriminant $D(a,b)$ of this polynomial to be a square, i.e., we need $c \in \Z$ such that  
\begin{equation} \label{Equation::DiscriminantSurface}
c^2 = D(a,b) = -4a^3 + a^2b^2 + 18ab - 4b^3+27.
\end{equation}
The equation \eqref{Equation::DiscriminantSurface} describes a surface $S$ in $\A_{\Z}^3$ in the variables $(a,b,c)$. Several curves on the surface $S$ have been studied; for example, the \emph{simplest cubics} of Shanks \cite{Shanks} are defined by $(a,b,c)=(a,-(a+3),a^2+3a+9)$. See also work of Balady \cite{Balady} for a study of the surface $S$ over $\Q$ and other families of cubic fields arising from rational curves on $S$; unfortunately, the families he presents \cite[\S 4]{Balady} all have unit signature rank $3$. By studying the surface $S$ we prove the following theorem. 
\begin{theorem} \label{thm:cyclfields}
There are cyclic cubic fields of arbitrarily large discriminant with unit signature rank $1$. 
\end{theorem}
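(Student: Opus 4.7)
The plan is to parametrize cyclic cubic fields via integer points $(a,b,c)$ on the affine surface $S \subset \A^3$ defined by $c^2 = D(a,b) = -4a^3 + a^2b^2 + 18ab - 4b^3 - 27$, restrict to the region where the polynomial $f_{a,b}(x) = x^3 - ax^2 + bx - 1$ has three positive real roots, and then produce an infinite family of integer points in this region whose associated fundamental units are totally positive.

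First I would reduce the condition. By Corollary \ref{cor:Kcycsgrkyup}(b), any cyclic cubic field satisfies $\sgnrk(\calO_K^\times) \in \{1,3\}$, and $\sgnrk=1$ is equivalent to every unit being $\pm$-totally positive. So it suffices to exhibit infinitely many integer points on $S$ for which the root $\alpha$ of $f_{a,b}$ and its Galois conjugate $\sigma\alpha$ are both totally positive \emph{and} generate $\calO_K^\times/\{\pm 1\}$. To locate the totally positive region, I would observe that if $a, b > 0$ and $c^2 = D(a,b) > 0$, then $f_{a,b}(0) = -1 < 0$, the leading coefficient is positive, and all three roots are real; Descartes' rule of signs applied to $f_{a,b}(x)$ and $f_{a,b}(-x)$ forces all three roots to be positive. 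Consequently $\alpha$ and $\sigma\alpha$ are totally positive units in $K = \Q(\alpha)$ whenever $(a,b,c) \in S(\Z)$ lies in the open quadrant $\{a>0, b>0\}$. Note that Washington's simplest cubic family lies on the rational curve $b = -(a+3)$, which is disjoint from this quadrant, consistent with the complementary behavior $\sgnrk=3$.

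Next, to produce infinitely many such integer points, I would exploit the K3 geometry of $S$. The surface is a double cover of $\A^2$ branched over the quartic $D(a,b)=0$, and so compactifies to a (singular) K3, with the boundary at infinity giving a log K3 structure. Using the involution $(a,b,c)\mapsto(b,a,c)$ together with natural elliptic pencils on $S$ (for example those obtained by fixing $a$, or by restricting to suitable linear pencils in $(a,b)$), I would search for an elliptic fibration $\pi \colon S \to \P^1$ whose generic fiber has positive Mordell--Weil rank over $\Q$ and which admits a section of infinite order lying in the positivity quadrant. Translates of such a section under the Mordell--Weil group then produce an infinite sequence of $\Q$-points; using a good integral model of the fibration and classical height-vs.-denominator estimates, one extracts a subsequence of $\Z$-points whose $(a,b)$ coordinates remain in the positivity region and tend to infinity. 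This is essentially the heart of the paper's claim that the problem reduces to studying integral points on a log K3 surface.

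Finally, for each member of this family I would need to rule out the possibility that $\alpha,\sigma\alpha$ generate only a proper subgroup of $\calO_K^\times/\{\pm1\}$ whose cokernel introduces a mixed-sign unit. This is the standard fundamentality problem: since the regulator $R(\alpha,\sigma\alpha)$ is explicit in $(a,b)$ while $\disc(K)$ grows polynomially along the family, a lower bound of Friedman/Zimmert type on $R_K$ comparable to $R(\alpha,\sigma\alpha)$ forces the index $[\calO_K^\times:\langle\pm1,\alpha,\sigma\alpha\rangle]$ to be $1$ (or at worst odd, which still forces $\sgnrk=1$) for all sufficiently large parameters. The main obstacle is the middle step: the unconditional construction of an elliptic pencil on $S$ with a suitable positive-rank section whose integral multiples actually fall inside the positivity quadrant rather than drifting to the Shanks side of the surface. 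Once such a fibration is exhibited explicitly, the positivity analysis and the regulator comparison are essentially routine.
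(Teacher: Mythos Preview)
Your setup in the first paragraph is correct and matches the paper's, but both the middle and final steps have genuine gaps.

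For the production of integral points: translates of a section under the Mordell--Weil group give \emph{rational} points on the genus-one fibers, and by Siegel's theorem each such fiber carries only finitely many integral points; there is no ``height-versus-denominator'' mechanism that turns infinitely many rational points on an elliptic curve into integral ones.  The paper avoids elliptic fibers entirely and instead finds \emph{genus-zero} curves on $S$.  It restricts to the pencil of parabolas $P_m \colon b = m(a^2+3a+9)-(a+3)$ through the two complex cusps of $\{D=0\}$; on $P_m$ one has $D(a,b) = (a^2+3a+9)^2\,Q_m(a)$ with $Q_m$ quadratic, so $c^2=D(a,b)$ becomes the Fermat--Pell conic $y^2=Q_m(a)$.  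For suitable $m$ (e.g.\ $m=2/13$, found by working backward from a single known example) classical Pell theory gives infinitely many integral $(a,y)$ with $a>0$, and one checks $b>0$ automatically.  A further application of Siegel's theorem to the genus-one curves $a^2+3a+9=tz^3$ shows that the discriminants of the resulting fields are unbounded, a point your proposal does not address.

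For the unit step: Friedman/Zimmert-type lower bounds on $R_K$ are absolute constants (or grow at best like a power of $\log\disc$), while $R(\alpha,\sigma\alpha)$ grows like $(\log\text{height})^2$, so the comparison can only bound the index $[\calO_K^\times:\langle\pm1,\alpha,\sigma\alpha\rangle]$, never force it to be $1$ or to be odd.  The paper's argument is both simpler and stronger, and in fact does not require fundamentality at all.  Since $\sgnrk(\calO_K^\times)\in\{1,3\}$ and $\sgnrk=3$ would make $\sgn_\infty$ injective on $\calO_K^\times/(\calO_K^\times)^2$, the existence of a single totally positive unit that is \emph{not a square} already forces $\sgnrk=1$.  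The paper then shows that the locus on $P_m$ where $\eta_{a,b}\in K_{a,b}^{\times 2}$ is a genus-one curve in auxiliary variables $(A,B)$ (via $(a,b)=(A^2-2B,B^2-2A)$), so by Siegel only finitely many $(a,b)$ along each $P_m$ are excluded.
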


More precisely, we find families of Fermat--Pell curves on $S$ that have infinitely many integral points $(a,b,c) \in S(\Z)$ and such that:
\begin{equation} \label{eqn:thingwewant}
\text{the roots of $f(x)$ are totally positive and not squares of smaller units}.  
\end{equation}
Studying the ramification in these extensions proves that our procedure produces cyclic cubic fields of arbitrarily large discriminant. 

Theorem \ref{thm:cyclfields} makes a result of Dummit--Dummit--Kisilevsky \cite[Theorem 3]{DDK} unconditional: namely, the difference between $\varphi(m)/2$ and the unit signature rank of $\Q(\cos(2\pi/m))$ can be arbitrarily large.  This result has also been given a different (unconditional) proof by Dummit--Kisilevsky \cite[Theorem 7]{DK2}.

\subsection{Construction of curves}  
 
Plotting the discriminant $D(a,b)=0$ we find two curves:
\begin{equation} \label{eqn:theplot}
\includegraphics[scale=0.13,valign=c]{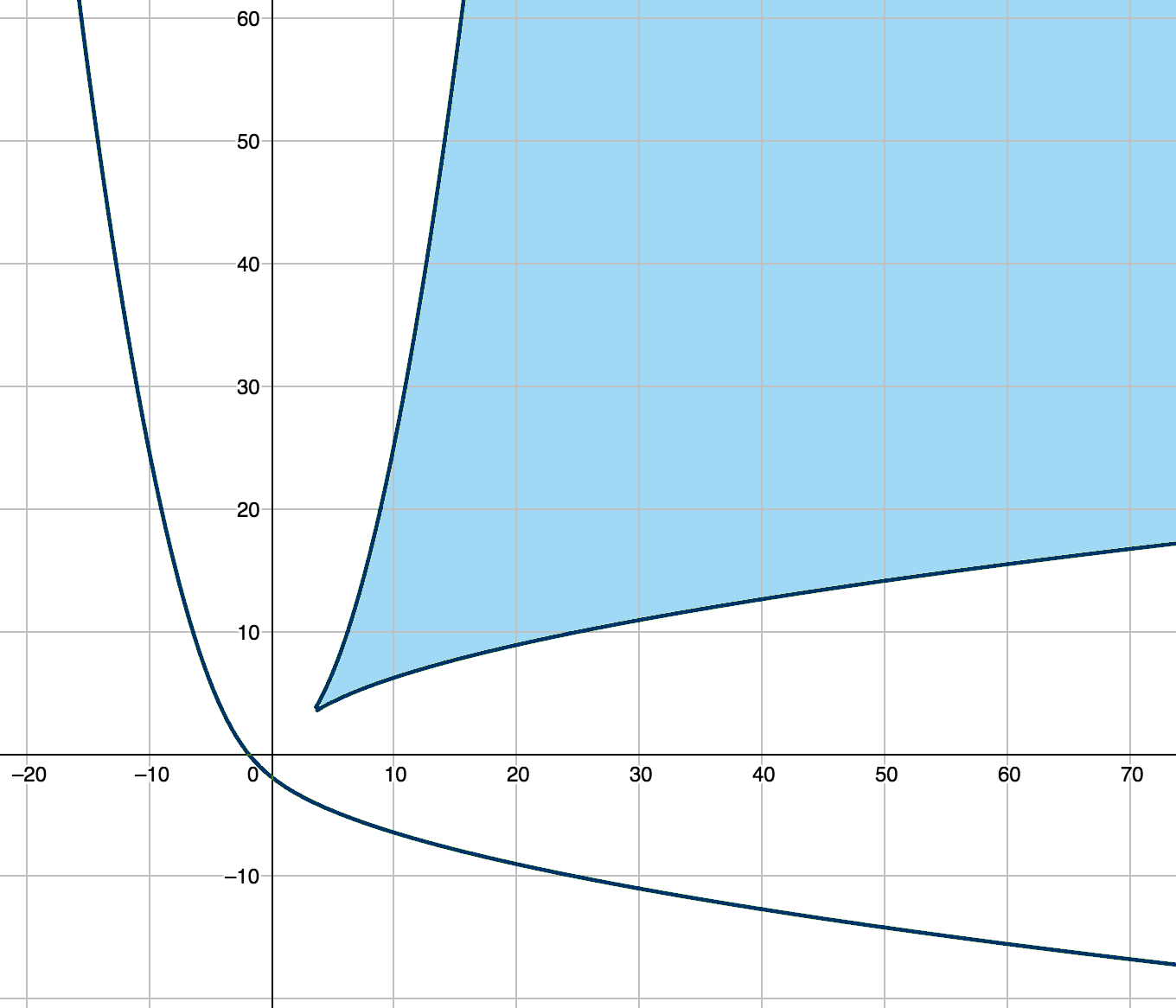}
\end{equation}
By continuity and checking values, the region in the upper right quadrant bounded by the cuspidal curve is the locus of $(a,b) \in \R^2$ with three positive roots: these are precisely the values of $(a,b)$ where $a,b>0$ and $f(x)$ has all real roots.

The curve $D(a,b)=0$ has a visible cusp at $(a,b)=(3,3)$,
corresponding to the cubic $f_{3,3}(x) = (x-1)^3$.
There are also two conjugate cusps $(a,b) = (3\zeta,3\bar\zeta)$
where $\zeta$ is one of the nontrivial cube roots of unity
$(-1 \pm \sqrt{-3}) / 2$; these likewise correspond to
$f_{3\zeta,3\bar\zeta}(x) = (x-\zeta)^3$.
The line joining these cusps is $a+b+3=0$;
on this line $D(a,b) = D(a,-(a+3))$ is a quartic in $a$ with
a double root at each $a = 3\zeta$,
and indeed $D(a,-(a+3)) = (a^2+3a+9)^2$,
so we recover Shanks' ``simplest cubics'';
we know already that we cannot use those cubics, and indeed
the line $a+b+3=0$ is disjoint from the shaded region in (\ref{eqn:theplot}).

We obtain our Fermat--Pell curves by trying curves of
the next-lowest degree passing through the conjugate cusps.
We use the pencil of parabolas in the $(a,b)$-plane
passing through those cusps and the point at infinity
$(a:b:1) = (0:1:0)$; that is, 
\begin{equation*}\label{Equation::Parabolas}
P_m \colon b = m(a^2+3a+9) - (a+3)
\end{equation*}
depending on a parameter $m \in \Q$.
On such a parabola, $D(a,b)$ is a sextic in~$a$ divisible by $(a^2+3a+9)^2$;
explicitly,
\begin{equation}
\label{Equation:Quadratic_factor}
c^2 = (a^2+3a+9)^2 Q_m(a),
\end{equation}
where $Q_m$ is the quadratic polynomial
$$Q_m(a) = m^2(1-4m)a^2 + (-12m^3 + 12m^2 - 2m) a  + (-36m^3 + 36m^2 - 12m + 1).$$
Dividing equation \eqref{Equation:Quadratic_factor} by the square factor on the right side  transforms the Diophantine equation $D(a,b)=c^2$ into an equivalent Fermat--Pell curve of the form $x^2 = Q_m(a)$ for suitable choices of~$m$. 

To avoid issues with integrality, let $m=p/q$ (written in lowest terms) and clear the denominators of \eqref{Equation:Quadratic_factor} by multiplying by $q^3$. The change of variable $y = qc/(a^2+3a+9)$ yields an equivalent integral curve
\begin{equation}
\label{Equation:PellEquation}
C_m \colon qy^2 = Aa^2 + B a + C
\end{equation}
in $\A_{\Z}^2$, where  
\begin{equation*}
\begin{aligned}
A & \colonequals  -4p^3+p^2q, \\
B & \colonequals   -12p^3 +12p^2q -2pq^2,  \\
C & \colonequals -36p^3 + 36p^2q - 12pq^2 + q^3. 
\end{aligned} 
\end{equation*}
We shall see that there are $m$ for which this curve
yields an infinite family of cyclic cubic fields with no mixed-sign units.

For starters, in order for $P_m$ to have integral points, the denominator of $m$ must be odd since $a^2 +3a + 9$ is always odd. Additionally, for $P_m$ to have infinitely many integral points in the first quadrant of the $ab$-plane, we must have $m>0$, else the intersection of $P_m$ with the half-plane $b>0$ is bounded.

\begin{proposition} \label{Prop:binteger}
Let $m \in \Q$ be such that the following conditions hold:
\begin{enumroman}
\item There exists $(a,y_0) \in C_m(\Z)$ with $a>0$ and $2Aa+B>0$; 
\item $m = p/q$ with $q$ odd;
\item $0<m<1/4$; and
\item $1-4m$ is not a square. 
\end{enumroman}
Then there exist infinitely many $(a,y) \in C_m(\Z)$ with $a>0$, and we have a map
\begin{equation*} 
\begin{aligned}
\phi_m \colon C_m(\Z) &\to S(\Z) \\
(a,y) &\mapsto (a,b,c)=(a, m(a^2+3a+9)-(a+3),y(a^2+3a+9)/q).
\end{aligned}
\end{equation*}
\end{proposition}

\begin{remark}
We note that condition (i) implies condition (ii) in Proposition \ref{Prop:binteger}. This can be shown by checking 2-adic valuations of each side of Equation \eqref{Equation:PellEquation}.
\end{remark}

\begin{proof}
Completing the square in \eqref{Equation:PellEquation}, we obtain the standard form:
\begin{equation} \label{eqn:xay2}
x^2-(4Aq)y^2 = B^2-4AC
\end{equation}
where $x=2Aa+B$.  By (iii), we have $0<m^2(1-4m)=A/q^3$ so $4Aq>0$ and $A>0$.  By (i), there exists $(a,y_0) \in C_m(\Z)$ with $a>0$, which gives a point $(x_0,y_0) \in \Z^2$ on \eqref{eqn:xay2} with $x_0=2Aa+B>0$.  Changing signs, we may suppose without loss of generality that $y_0>0$.  By (iv), we conclude $4Aq=(1-4m)(2mq^2)^2$ is not a square.  

We now apply the theory of Pell equations to obtain infinitely many solutions $(x,y) \in \Z^2$ to \eqref{eqn:xay2} with $x \equiv x_0 \equiv B \pmod{2A}$ and $x>0$: explicitly, there exists a power of the fundamental unit for the real quadratic field $\Q(\sqrt{Aq})$ of the form $\epsilon = r+s\sqrt{4Aq}$ with $r,s \in \Z_{>0}$ and $r \equiv 1 \pmod{2A}$, so the solutions $(x_k,y_k)$ obtained by multiplying $x_0+y_0\sqrt{4Aq}$ by the powers $\epsilon^k = r_k + s_k\sqrt{4Aq}$ for $k \geq 1$ have $r_k \equiv 1 \pmod{2A}$ and $r_k,s_k>0$, so $x_k=r_kx_0+s_ky_0(4Aq) \equiv x_0 \pmod{2A}$ and $x_k>x_0>0$.  Thus 
\[ a_k=(x_k-B)/(2A)>\cdots>(x_0-B)/(2A)=a>0, \]
and we obtain infinitely many points $(a_k,y_k) \in C_m(\Z)$.  

To conclude, we claim that if $(a,y) \in C_m(\Z)$, then  
$$ b = m(a^2+3a+9) - (a+3) \in \Z.$$
Reducing \eqref{Equation:PellEquation} modulo $q$ gives 
\begin{equation*}
    0 \equiv -4p^3a^2-12p^3a-36p^3 = -4p^3(a^2+3a+9) \pmod{q};
\end{equation*}
and since $q$ is odd and $\gcd(p,q)=1$ we conclude $q \mid (a^2+3a+9)$, so $m(a^2+3a+9) \in \Z$ and consequently $b \in \Z$.
\end{proof}

\begin{remark}
The method for getting infinitely many $(a,y) \in C_m(\Z)$ from
an initial solution was already known to Euler \cite{Euler};
see Dickson \cite[pp.~355--356]{Dickson} 
  (English translation in the Euler Archive, \url{http://eulerarchive.maa.org/tour/tour_12.html}).
  Dickson describes Euler's technique, which comes down to the same
  construction, though of course Euler did not use the arithmetic of
  real quadratic number fields.
\end{remark}

To find a value of $m$ suitable for applying Proposition \ref{Prop:binteger}, we work backwards by first selecting an integral point $(a,b,c) \in S$ (by a brute force search or starting with a cyclic cubic field of unit signature rank $1$) and then solving for the parameter $m$ of the parabola $P_m$.  (Since $m$ occurs linearly in the formula for $P_m$, there is a unique solution; explicitly
\begin{equation} \label{eqn:m}
m = \frac{a+b+3}{a^2+3a+9}.
\end{equation}
As it happens the denominator is always positive so we do not even
have to worry about dividing by zero at an unfortunate choice of $(a,b)$.)

\begin{example} \label{exm:ab1}
Let $(a,b) = (149,4018)$. Solving for the corresponding parabola yields $m = 30/163$ which satisfies the conditions on $m$. The resulting equation is
\begin{equation*}
    C_m \colon 163 y^2 = 38700a^2  -157740a - 924893
\end{equation*}
and yields a sequence of solutions 
\begin{align*}
    (a,b) &= (149, 4018), (395449, 28781401718), \\
 & \qquad (655993191035058918, 79201300616753245838398841511537549), \dots 
\end{align*}
\end{example}

\begin{example}  \label{exm:ab2}
Similarly for $(a,b) = (269, 10986)$, we obtain $m = 2/13$, 
$$C_m \colon 13 y^2 = 20a^2 - 148a - 275,$$
and 
\begin{align*}
    (a,b) &= (1725, 456858), (17657181,47965535241018), \\
 & \qquad (114572909, 2019530934725706), \\
 & \qquad (1175297035181, 212511249369405417243018), \dots
 \end{align*}
\end{example}

\begin{remark}
Having found one $m$ satisfying the conditions of
Proposition~\ref{Prop:binteger}, such as $m = 2/13$ above,
we can find infinitely many more.
This is because $D$, and thus $S$,
is symmetric under $(a,b) \leftrightarrow (b,a)$.
Given an infinite sequence of $(a_k,b_k) \in C_m(\Z)$,
we may switch $a,b$ in (\ref{eqn:m}) to find an infinite sequence of
$m_k = (a_k + b_k + 3) / (b_k^2 + 3 b_k + 9)$ satisfying
condition (i) of Proposition~\ref{Prop:binteger}.
For $m=2/13$, these $m_k$ begin
$$
\frac2{21447},\;
\frac2{910279},\;
\frac2{95931035167687},\;
\frac2{4039061640305607},\;
\frac2{425022498736460240415687},\dots
$$
corresponding to the initial solution $(a,b) = (149,4018)$ in
Example~\ref{exm:ab1} and the further four solutions listed there.
Condition (ii), that $0 < m_k < 1/4$, is satisfied for
all but finitely many $k$: each $m_k$ is positive, and
$m_k \to 0$ because $a_k \to \infty$ and $b_k \sim m a_k^2$.
It remains to check that $1-4m_k$ is not a square for infinitely many~$k$
(condition~(iii)).  This can be done in various ways;
for example, once we have checked this for one $k_0$,
we can find some prime $\ell$ such that $1-4m_{k_0}$
is not a square mod~$\ell$, and apply Euler's theorem
as in~\ref{Prop:binteger} to find infinitely many $k$
such that $m_k \equiv m_{k_0} \pmod \ell$,
whence $1 - 4m_k$ is not a square either.
For our $m=2/13$ we may use $m_{k_0} = 2/21447$ and $\ell=5$.
This gives infinitely many curves $C_{m_k}$ each containing
infinitely many integral points of~$S$\/ above the shaded region
in~(\ref{eqn:theplot}), thus showing that such points are
Zariski-dense in~$S$.
(This is the same technique used by Elkies \cite{Elkies:Euler}
to find a Zariski-dense set of rational points on the Fermat
quartic surface $A^4 + B^4 + C^4 = D^4$ starting from a single
elliptic curve on that surface with infinitely many rational points.)
\end{remark}

\subsection{Infinitely many cyclic cubic fields}  

The construction above produces infinitely many integral points $(a,b)$ that correspond to cyclic cubic fields with totally positive units.  We now show that for all but finitely many $(a,b)$, the condition \eqref{eqn:thingwewant} holds.  

For $a,b \in \Z^2$ such that $f_{a,b}(x)=x^3-ax^2+bx-1$ is irreducible, let $K_{a,b} \colonequals \Q[x]/(f_{a,b}(x))$ and let $\eta_{a,b} \in K_{a,b}$ be the image of $x$.

\begin{lemma}\label{lem:squares}
The following statements hold.  
\begin{enumalph}
\item If $\eta_{a,b} \in K_{a,b}^{\times 2}$, then $(a,b) = (A^2-2B, B^2-2A)$ for some $A,B \in \Z$.
\item Let $m \in \Q$.  Then there are only finitely many $(a,b) \in P_m(\Z)$ such that $\eta_{a,b} \in K_{a,b}^{\times 2}$.  
\end{enumalph} 
\end{lemma}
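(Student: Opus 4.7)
For part (a), the plan is to express the elementary symmetric functions $a, b$ of the conjugates of $\eta_{a,b}$ in terms of those of a square root of $\eta_{a,b}$. Suppose $\eta_{a,b} = \gamma^2$ with $\gamma \in K_{a,b}^{\times}$. Since $\eta_{a,b}$ satisfies the monic integral polynomial $f_{a,b}$, the element $\gamma$ satisfies the monic integral polynomial $f_{a,b}(y^2) \in \Z[y]$, whence $\gamma \in \cO_{K_{a,b}}$; moreover, $\gamma$ is a unit because $\eta_{a,b}$ is. Let $A, B, C \in \Z$ denote the elementary symmetric polynomials of the Galois conjugates $\gamma_1, \gamma_2, \gamma_3$ of $\gamma$; since $C^2 = N_{K_{a,b}/\Q}(\eta_{a,b}) = 1$, we have $C = \pm 1$, and replacing $\gamma$ by $-\gamma$ if needed we may assume $C = 1$. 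The standard Newton-type identities $\sum_i \gamma_i^2 = A^2 - 2B$ and $\sum_{i<j} \gamma_i^2\gamma_j^2 = B^2 - 2AC$ then yield $(a,b) = (A^2 - 2B, B^2 - 2A)$, as claimed.

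For part (b), by (a) it will suffice to show that the affine plane curve $\mathcal{C}_m$ obtained by substituting $a = A^2 - 2B$ and $b = B^2 - 2A$ into the defining equation $b = m(a^2 + 3a + 9) - (a+3)$ of $P_m$ has only finitely many integer points. Explicit expansion exhibits $\mathcal{C}_m$ as a polynomial of total degree four, quadratic in $B$ with leading coefficient $1 - 4m$. Completing the square in $B$ transforms the equation to
\[ y^2 = Q_m(A), \]
for an explicit quartic polynomial $Q_m(A)$ with coefficients polynomial in $m$, where $y$ is an explicit affine-linear combination of $1$, $B$, and $A^2$.

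For every $m \in \Q$ such that $Q_m$ has four distinct complex roots, the affine curve $y^2 = Q_m(A)$ has geometric genus one, so by Siegel's theorem it has only finitely many integer points $(A,y)$, and hence the same holds for $(A,B) \in \Z^2$ and for $(a,b) \in P_m(\Z)$. The remaining values of $m$ are those for which $\disc(Q_m)$ vanishes, together with $m = 0$ and $m = 1/4$ (where the leading coefficient of $\mathcal{C}_m$ as a quadratic in $B$, or of $Q_m$ as a polynomial in $A$, vanishes); these form a finite set, and each case is handled directly. For instance, at $m = 0$ the equation $\mathcal{C}_0 = 0$ becomes $(A-1)^2 + (B-1)^2 + 1 = 0$, which has no integer solutions; at $m = 1/4$ the equation is linear in $B$, and a short parity-and-divisibility analysis forces $A$ to be bounded. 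The main obstacle is verifying that $\disc(Q_m) \not\equiv 0$ as a polynomial in $m$---so that Siegel's theorem applies to all but finitely many $m$---and then settling the finitely many exceptional values individually.
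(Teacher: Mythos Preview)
Your approach is essentially identical to the paper's. For (a), both arguments normalize a square root $\epsilon$ of $\eta_{a,b}$ so that $N_{K/\Q}(\epsilon)=1$ and then read off $(a,b)=(A^2-2B,B^2-2A)$ from the elementary symmetric functions of the conjugates of $\epsilon$. For (b), both substitute $(a,b)=(A^2-2B,B^2-2A)$ into the defining equation of $P_m$, view the result as quadratic in $B$, complete the square to obtain $y^2=Q_m(A)$ for a quartic $Q_m$, and invoke Siegel's theorem on the resulting genus-one curve.

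The one place your write-up stops short is precisely the point you flag as the ``main obstacle'': showing that $\disc_A(Q_m)$ does not vanish identically in $m$. The paper resolves this by direct computation: the discriminant factors as $m(1-8m)^2 g(m)$ with $g$ irreducible of degree~$7$ over $\Q$. This is a routine symbolic-algebra calculation, and once it is done the only \emph{rational} exceptional values are $m\in\{0,\tfrac18,\tfrac14\}$, since $g$ has no rational root. You have already dispatched $m=0$ (no real points) and $m=\tfrac14$; for $m=\tfrac18$ observe that $a^2+3a+9$ is odd for every $a\in\Z$, so $b=\tfrac18(a^2+3a+9)-(a+3)$ is never an integer and $P_{1/8}(\Z)=\emptyset$, making the claim vacuous there. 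With the discriminant factorization supplied, your argument is complete---and indeed slightly more thorough than the paper's, which states the factorization but does not spell out the three degenerate cases.
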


\begin{proof}
For (a), let $\epsilon^2 = \eta_{a,b}$. Replacing $\epsilon$ by $-\epsilon$ if necessary, we may suppose that $\epsilon$ is a root of $x^3-Ax^2+Bx-1$.  Expressing $a,b$ as symmetric polynomials in the roots, we obtain the result.

For (b), we study the squares on the parabola $P_m$ by substituting in (a) to get
$$ B^2-2A = m ((A^2-2B)^2 + 3(A^2-2B) + 9) -(A^2-2B +3). $$
which yields 
\begin{equation} \label{eqn:smallm}
(1-4m)B^2 + (4mA^2+6m-2)B = mA^4 + (3m-1)A^2 + 2A + (9m-3);
\end{equation}
multiplying by $1-4m$ and letting $B' \colonequals (1-4m)B$, we obtain
\begin{equation} \label{eqn:hyperell}
B'^2 + (4mA^2+6m-2)B' = (1-4m)(mA^4 + (3m-1)A^2 + 2A + (9m-3)).
\end{equation}
The equation \eqref{eqn:hyperell} describes a family of curves of genus $1$ in the variables $A,B$ over $\Q(m)$.  Its discriminant is $-256m(1-4m)^2(27m^2-9m+1)^3$, so \eqref{eqn:hyperell} is smooth of genus $1$ for all $m \in \Q \smallsetminus \{0,1/4\}$, and so the same is true of \eqref{eqn:smallm}.  By Siegel's theorem \cite[Corollary IX.3.2.2]{Silverman}, for fixed $m$ the equation \eqref{eqn:smallm} has finitely many integral solutions $(A,B) \in \Z^2$.  The conclusion then follows from (a).
\end{proof}

Next, we prove that the construction above produces infinitely many distinct fields of the form $K_{a,b}$.  Let $\alpha_{a,b} \colonequals 3\eta_{a,b} -a \in \mathcal{O}_{K_{a,b}}$; then $\alpha_{a,b}$  is an algebraic integer satisfying:
\begin{equation*} \label{eqn:shifted} 
M_{a,b} (x) \colonequals x^3 + (a^2+3a+9)(9m-3)x + (a^2+3a+9)(a(9m-2)-3) \in \Z[x].
\end{equation*}

\begin{lemma} \label{lem:Inftyfields}
Let $m \in \Q$ satisfy conditions \textup{(i)}--\textup{(iv)} of Proposition \textup{\ref{Prop:binteger}}.  Then as $a$ ranges over points $(a,b,c) \in \phi_m(C_m(\Z)) \subseteq S(\Z)$ with $a>0$, the set of primes $\ell$ such that $3 \nmid \ord_\ell(a^2+3a+9)$ is infinite.  Moreover, $\alpha_{a,b}$ generates a totally ramified extension of $\Q_\ell$ for all but finitely many such $\ell$ (depending on $m$).  
\end{lemma}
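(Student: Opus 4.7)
The plan is to prove the two assertions separately: the first by reducing to Siegel's theorem on a family of Mordell-type curves, and the second by a Newton polygon analysis of the minimal polynomial of $\alpha_{a,b}$.

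For the first assertion, I argue by contradiction. Suppose only finitely many primes $\ell_1,\ldots,\ell_k$ satisfy $3 \nmid \ord_\ell(a^2+3a+9)$ for some $a$ in the infinite sequence furnished by Proposition~\ref{Prop:binteger}. Then the cube-free part of $a^2+3a+9$ lies in the finite set of cube-free integers supported on $\{\ell_1,\ldots,\ell_k\}$, so by pigeonhole there is a fixed cube-free integer $c$ with $a^2+3a+9 = c r^3$ for infinitely many $(a,r)\in\Z^2$. Completing the square and multiplying by $16c^2$ transforms this into the Mordell equation
\[ W^2 = V^3 - 432\,c^2, \qquad W = 4c(2a+3),\ V = 4cr, \]
which has only finitely many integer solutions by Siegel's theorem, contradicting the infinitude of the $a$'s.

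For the second assertion, I first compute the minimal polynomial of $\alpha_{a,b} = 3\eta_{a,b}-a$ by substituting $\eta_{a,b}=(x+a)/3$ into $\eta^3-a\eta^2+b\eta-1=0$ and using $b = m(a^2+3a+9)-(a+3)$. This yields
\[ M_{a,b}(x) = x^3 + N\lambda\,x + N\mu, \qquad N \colonequals a^2+3a+9, \]
with $\lambda = 9m-3$ and $\mu = a(9m-2)-3$, where $N\lambda, N\mu \in \Z$. Write $m=p/q$ in lowest terms and let $\mathcal{E}$ be the finite set of primes dividing $3q(27p^2-9pq+q^2)$; the integer $27p^2-9pq+q^2$ is nonzero, as its discriminant in $p$ equals $-27q^2<0$. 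Fix any prime $\ell\notin\mathcal{E}$ with $v \colonequals \ord_\ell(N)$ satisfying $3\nmid v$. I claim $\ord_\ell(N\mu)=v$: otherwise $\ell \mid a(9p-2q)-3q$, so combining with $\ell \mid a^2+3a+9$ and eliminating $a$ (valid since $\ell \nmid 9p-2q$, for else $\ell \mid 3q$, forcing $\ell=3\in\mathcal{E}$) yields $\ell \mid 27(27p^2-9pq+q^2)$, forcing $\ell \mid 27p^2-9pq+q^2$ as $\ell\neq 3$, a contradiction. Since $\ell\nmid q$, we also have $\ord_\ell(N\lambda)\geq v$, so the $\ell$-adic Newton polygon of $M_{a,b}$ consists of the single segment from $(0,v)$ to $(3,0)$ of slope $-v/3$. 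Because $3\nmid v$, every root of $M_{a,b}$ in $\overline{\Q}_\ell$ has non-integral valuation $v/3$; hence $M_{a,b}$ is irreducible over $\Q_\ell$ with ramification index $3$, so $\Q_\ell(\alpha_{a,b})/\Q_\ell$ is totally ramified.

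The main obstacle is the appeal to Siegel's theorem in part one, which prevents a fully elementary argument; the remainder is a routine Newton polygon computation, with only mild subtlety in identifying the finite exceptional set $\mathcal{E}$ of primes.
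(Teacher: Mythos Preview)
Your proof is correct and follows essentially the same route as the paper. For the first assertion, both you and the paper invoke Siegel's theorem on the genus~$1$ curve $a^2+3a+9 = t z^3$ (you go one step further and write it explicitly as a Mordell equation, but this is cosmetic); for the second, both analyze the Newton polygon of $M_{a,b}$ and control $\ord_\ell$ of the constant term via the resultant of $a^2+3a+9$ and $a(9m-2)-3$ --- the paper displays the Bezout identity yielding $27(27m^2-9m+1)$, while you carry out the equivalent elimination modulo~$\ell$ to arrive at the same obstruction $27p^2-9pq+q^2$.
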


\begin{proof} 
Let $t \in \Z_{>0}$ be cubefree.  Then $a^2+3a+9=tz^3$ defines a genus $1$ curve, so by Siegel's theorem it has finitely many integral points.  Therefore  there are only finitely many $(a,y) \in C_m(\Z)$ such that $a^2+3a+9=tz^3$ for $z \in \Z$. But $\#C_m(\Z)=\infty$ by Proposition \ref{Prop:binteger}, so the cubefree part of $a^2+3a+9$ must take on infinitely many values. 

For the second statement, we consider the Newton polygon of $M_{a,b}(x)$ at $\ell$.  Since
\[ (81m^2-36m+4)(a^2+3a+9) + ((2-9m)a+3-27m)((9m-2)a-3)=27(27m^2-9m+1) \]
it follows that, for any prime $\ell$ such that $\ell \nmid 27(27m^2-9m+1)q^2$, we have
$$\ord_{\ell}[(a^2+3a+9)(a(9m-2)-3)] = \ord_{\ell}(a^2+3a+9).$$ 
For such primes, the $\ell$-Newton polygon of $M_{a,b}(x)$ consists of a single segment of slope $\ord_\ell(a^2+3a+9)/3$, and hence the extension defined by $M_{a,b}(x)$ over $\Q_\ell$ is totally ramified.
\end{proof}

We finish with a proof of the theorem in this section.

\begin{proof}[Proof of Theorem \textup{\ref{thm:cyclfields}}]
Let $m \in \Q$ satisfy \textup{(i)}--\textup{(iv)} of Proposition \textup{\ref{Prop:binteger}}, so that $\phi_m(C_m)(\Z)$ contains infinitely many points $(a,b,c) \in S(\Z)$ with $a > 0$, and hence $b>0$; for example, we may take $m=30/163,2/13$ as in Examples \ref{exm:ab1} and \ref{exm:ab2}.  The intersection of $P_m$ with the lines $b=a$ and $b =a-2$ removes at most $4$ values of $a$; for the values that remain, $f_{a,b}(x)=x^3-ax^2+bx-1$ is irreducible over $\Q$.  To each of these points we associate the field $K_{a,b}=\Q(\eta_{a,b})$ where $\eta_{a,b}$ is a root of $f(x)$, and consider the set of fields 
\[\mathcal{K}_m \colonequals \{ K_{a,b} : (a,b,c) \in \phi_m(C_m)(\Z) \text{ and $f_{a,b}(x)$ is irreducible} \}. \]
Each $K_{a,b} \in \mathcal{K}_m$ is a cyclic cubic extension because its discriminant is (up to squares) equal to $c^2$, and since $a,b>0$ its roots are totally positive as in \eqref{eqn:theplot}.
By Lemma \ref{lem:Inftyfields}, there are infinitely many primes $\ell$ dividing the discriminants of the fields in $\mathcal{K}$ and so the set contains fields with arbitrarily large discriminants.  By Lemma \ref{lem:squares}, in the set $\mathcal{K}$ there are only finitely many fields where $\eta_{a,b} \in K_{a,b}^{\times 2}$; let $\mathcal{K}^+$ be the infinitely many remaining fields. Since $\eta_{a,b} \not \in K_{a,b}^{\times2}$, then $\eta_{a,b}$ is a totally positive unit that is not a square.  By Corollary \ref{cor:Kcycsgrkyup}, we have $\sgnrk \calO_{K_{a,b}}^\times=1,3$, so we must have unit signature rank $1$, i.e., there is a basis of totally positive units.  
\end{proof}

\end{document}